\numberwithin{equation}{section}
\newtheorem{proposition}{Proposition}[section]
\newtheorem{lemma}[proposition]{Lemma}
\newtheorem{corollary}{Corollary}[section]
\newtheorem{theorem}{Theorem}[section]
\theoremstyle{definition}
\newtheorem{definition}{Definition}[section]
\newtheorem{remark}{Remark}[section]
\newcommand{\uu}{\underline{u}}	
\newcommand{\uuo}{\underline{u_0}}	
\newcommand{\uui}{\underline{u_i}}	
\newcommand{\ca}{\mathcal{a}}
\newcommand{\ba}{\bar{\mathcal{a}}}
\newcommand{\cb}{\mathcal{b}}
\newcommand{\cg}{\mathcal{g}}
\newcommand{\cc}{\mathcal{c}}
\newcommand{\cm}{\mathcal{m}}
\newcommand{\bc}{\bar{\mathcal{c}}}
\newcommand{\ck}{\mathcal{k}}
\newcommand{\mf}{\mathring{f}}
\newcommand{\vertiiii}[1]{{\left\vert\kern-0.25ex\left\vert\kern-0.25ex\left\vert\kern-0.25ex\left\vert #1 \right\vert\kern-0.25ex\right\vert\kern-0.25ex\right\vert\kern-0.25ex\right\vert}}
\newcommand{\vertiii}[1]{{\left\vert\kern-0.25ex\left\vert\kern-0.25ex\left\vert #1 \right\vert\kern-0.25ex\right\vert\kern-0.25ex\right\vert}}
\newcommand{\Rbb}{\mathbb{R}}
\newcommand{\Zbb}{\mathbb{Z}}
\newcommand{\Tbb}{\mathbb{T}}
\newcommand{\Pbb}{\mathbb{P}}
\newcommand{\del}[1]{{\partial_{#1}}}
\newcommand{\AND}{{\quad\text{and}\quad}}
\newcommand{\Li}{L^\infty}
\newcommand{\la}{\langle}
\newcommand{\ra}{\rangle}
\newcommand{\p}[1]{
	\begin{pmatrix}
		#1
	\end{pmatrix}
}
\newcommand{\B}{\mathcal{B}}
\newcommand{\U}{\mathcal{U}}
\newcommand{\Pbp}{\mathbb{P}^{\perp}}
\newcommand{\be}{\begin{equation}}
	\newcommand{\ee}{\end{equation}}
\begin{document}

	\title[Blowups for a class of second order nonlinear hyperbolic equations]{Blowups for a class of second order nonlinear hyperbolic equations: A reduced model of nonlinear Jeans instability}

	\author{Chao Liu}
	
	\address[Chao Liu]{Center for Mathematical Sciences and School of Mathematics and Statistics, Huazhong University of Science and Technology, Wuhan 430074, Hubei Province, China.}
	\email{chao.liu.math@foxmail.com}

\begin{abstract}
Understanding the formation of nonlinear structures in the universe and stellar systems is crucial. The nonlinear Jeans instability plays a key role in these formation processes. It has been a long-standing open problem in astrophysics for more than a century.  In this article, we focus on a reduced model of the nonlinear Jeans instability in an expanding Newtonian universe, which is described by a class of second-order nonlinear hyperbolic equations.
 \begin{equation*}
 	\Box \varrho(x^\mu) +\frac{\mathcal{a} }{t} \del{t}\varrho(x^\mu) -
 	\frac{\mathcal{b}}{t^2} \varrho(x^\mu) (1+  \varrho(x^\mu) ) -\frac{\mathcal{c}-\ck}{1+\varrho(x^\mu)} (\del{t}\varrho(x^\mu))^2=  \ck F(t).
 \end{equation*}
We establish a family of nonlinear self-increasing blowup solutions (where the solution itself becomes infinite in a stable ODE-type blowup) for this equation. Furthermore, we provide estimates on the growth rate of $\varrho$, which may help explain why the nonlinear structures in the universe grow much faster in astrophysical observations than predicted by the classical Jeans instability.

 \vspace{2mm}

{{\bf Keywords:} blowup, ODE blowup, Jeans instability, self-increasing blowup, second order nonlinear hyperbolic equations}

\vspace{2mm}

{{\bf Mathematics Subject Classification:} Primary 35A01; Secondary 35L02, 35L10, 83F05}
\end{abstract}

	\maketitle
	
	\setcounter{tocdepth}{2}
	
	\pagenumbering{roman} \pagenumbering{arabic}

\section{Introduction}
Gravitational instability is extremely important in astrophysics, as it governs the mass accretion of self-gravitating systems and contributes to understanding the formation of stellar systems and nonlinear structures in the universe. 
However, up to now, gravitational instability has only been studied in the \textit{linear regime}. The first linearized gravitational instability was studied by Jeans \cite{Jeans1902} in 1902 for Newtonian gravity (hence also known as the \textit{Jeans instability}) and was later generalized to general relativity by Lifshitz \cite{Lifshitz1946} in 1946. 
With the recognition of cosmic expansion, the Jeans instability was extended to an expanding background universe by Bonnor \cite{Bonnor1957} (see also \cite{Zeldovich1971,ViatcehslavMukhanov2013}).  As mass accumulates and density increases, the linear Jeans instability approximation becomes invalid, as the growing density causes the system to deviate significantly from the linear regime.
Moreover, the growth rate of density predicted by the classical linearized Jeans instability cannot account for the large inhomogeneities observed in the present universe or the formation of galaxies, as it is too slow and inefficient.
In fact, \cite{Bonnor1957,Zeldovich1971,ViatcehslavMukhanov2013} and our previous work \cite{Liu2022} indicate that, in expanding Newtonian models, the growth rate of density for the linear or certain partially nonlinear Jeans instability follows an order of $\sim t^{\frac{2}{3}}$. Due to the incompleteness of these astrophysical theories and the many mysterious phenomena potentially related to nonlinear Jeans instability (e.g., space charges in conductors), we must undertake a nonlinear analysis of Jeans instability.  However, as pointed out by Rendall \cite{Rendall2002} in 2002, no results are available on the fully nonlinear case of Jeans instability, which has remained a long-standing open problem in astrophysics.

In this article, we focus on a ``reduced model'' of the nonlinear Jeans instability for expanding Newtonian universes, which can be characterized by solutions exhibiting nonlinear self-increasing blowup\footnote{The solution itself becomes infinite at some time, known as \textit{ODE-type blowup}'' according to Alinhac \cite{Alinhac1995}. } in the following class of second-order nonlinear hyperbolic equations,
 \begin{gather}
    \Box \varrho(x^\mu) +\frac{\mathcal{a} }{t} \del{t}\varrho(x^\mu) -
	\frac{\mathcal{b}}{t^2} \varrho(x^\mu) (1+  \varrho(x^\mu) ) -\frac{\mathcal{c}-\ck}{1+\varrho(x^\mu)} (\del{t}\varrho(x^\mu))^2=  \ck F(t) , \label{e:maineq0}   \\
	\varrho|_{t=t_0}= \mathring{\varrho}(x^i)>0  \AND 	\del{t} \varrho|_{t=t_0}=   \mathring{\varrho}_0(x^i) >0 , \label{e:maineq1}
\end{gather}
where\footnote{In this article, we use the index convention given in \S\ref{s:AIN}, i.e., $\mu=0,\cdots,n$ and $i=1,\cdots,n$, $x_0=t$. } for simplicity, we define $\Box:=\partial^2_t-\Delta_g=\partial^2_t-\mathcal{g}^{ij}(t) \del{i} \del{j}$, where $\mathcal{g}^{ij}(t)$ is a prescribed Riemannian metric. We set $x^\mu:=(t,x^1,\cdots,x^n)\in[t_0,\infty)\times \Tbb^n$ with $t_0>0$, and assume that $\mathring{\varrho}(x^i)$ and $\mathring{\varrho}_0(x^i)$ are given positive-valued functions. The constants $\ca, \cb, \cc, \ck$ satisfy 
\begin{equation}\label{e:abcdk}
	\ca>1, \quad \cb>0, \quad 1<\cc < 3/2  \AND 3 c-\sqrt{2} \sqrt{8 \cc-5}<\ck<3 c+\sqrt{2} \sqrt{8 \cc-5} .
\end{equation}
In general, $\mathcal{g}^{ij}$ and $F$ are typically allowed to depend on the spatial variables $x^i$. However, to simplify the model and capture key velocity field features in fully nonlinear Jeans instability, we focus on the following specific functions, $\cg^{ij}(t)$ and $F(t)$, 
\begin{equation}\label{e:Fdef}
	\cg^{ij}(t):= \frac{\mathcal{m}^2 (\del{t} f(t))^2}{(1+f(t))^2}\delta^{ij}  \AND F(t):= \frac{(\del{t} f(t))^2}{1+f(t)},
\end{equation}
where $\cm\in \Rbb$ is a prescribed constant, and $f(t)$ satisfies the following ordinary differential equation (ODE),
\begin{gather}
	\partial^2_t f(t)+\frac{\ca}{t} \del{t} f(t)-\frac{\cb}{t^2} f(t)(1+  f(t))-\frac{\cc}{1+f(t)}(\del{t} f(t))^2=   0 , \label{e:feq0}\\
	f(t_0)= \mf>0 \AND
	\del{t}f(t_0)=   \mf_0>0.  \label{e:feq1}
\end{gather}
where $\mf,\mf_0>0$ are positive constants.

As shown in our companion article \cite{Liu2022b}, readers can \textit{see the derivation of the model equations \eqref{e:feq0}--\eqref{e:feq1}} (see \cite[eq. $(35)$]{Liu2022b}) from the nonlinear Euler--Poisson system by considering homogeneous and isotropic perturbations. 
As for the model \eqref{e:maineq0}--\eqref{e:maineq1}, its derivation requires additional calculations and Helmholtz decompositions of the spatial derivatives of velocity, following the approach in \cite{Liu2022b} or Peebles  \cite[\S II.9]{Peebles2020}. However, one usually cannot derive this model directly from the nonlinear Euler--Poisson system; instead, one obtains significantly more complex forms that depend on the initial distributions of density and velocity. These forms are coupled with velocity fields and typically involve more intricate expressions for $\mathcal{g}^{ij}$ and $F$, which depend on the velocity field. We omit the lengthy details and leave them for future work.  We simplify the complexity of $\mathcal{g}^{ij}$ and $F$ by making specific choices and selecting the model \eqref{e:maineq0}--\eqref{e:maineq1}. Hence, we refer to this as a ``reduced model''. The functions $\cg^{ij}(t)$ and $F(t)$ given by \eqref{e:Fdef} may not accurately represent the fully nonlinear Jeans instability. However, we use them at this stage to explore fundamental ideas. Therefore, as in \cite{Liu2022b}, if we set
\begin{equation}\label{e:spabc}
	\ca=4/3, \quad \cb=2/3, \quad \cc=4/3 \AND  3 c-\sqrt{2} \sqrt{8 \cc-5}<\ck<3 c+\sqrt{2} \sqrt{8 \cc-5},
\end{equation}
then the system \eqref{e:maineq0}--\eqref{e:maineq1} and \eqref{e:feq0}--\eqref{e:feq1} can be regarded as a \textit{poor man’s model} of the nonlinear Jeans instability of the expanding Newtonian universe for the Euler--Poisson system, while ignoring the velocity field equations and tidal forces. Certain effects of the velocity field are captured by the source term $F(t)$ and $\cg^{ij}(t)$. The fully nonlinear Jeans instability of the expanding Newtonian universe for the Euler--Poisson equations is in preparation. In astrophysical literature and \cite{Liu2022b}, the variable $\varrho$ represents the relative density of the fluids. Thus, an increase in the relative density $\varrho$ indicates mass accretion and the formation of nonlinear structures in the universe and stellar systems.

\begin{remark}[Further relations between physical Jeans instability and the reduced model]
	During the review process of this article, we have obtained two new results  \cite{Liu2023a, Liu2024a} based on this work. Below, we explain the relationship between this reduced model and physical systems in light of these new findings and insights.
	From a mathematical perspective, the parameter range for $\ca,\cb,\cc$ in \eqref{e:abcdk} is determined by the analysis in this article. However, as discussed above, this range \textit{encompasses the physically relevant regime}. 
	For instance, in both the Euler–Poisson and Einstein–Euler systems, after appropriate transformations and linearization (see \cite{Peebles2020,Weinberg1976,ViatcehslavMukhanov2013,Zeldovich1971,Bonnor1957}), the resulting linear equations align with the linear part of \eqref{e:maineq0}, where $\ca,\cb,\cc$  satisfy \eqref{e:spabc}. 
	Additionally, in \cite[\S II.9]{Peebles2020}, Peebles derived the nonlinear equation for the density contrast $\delta$ from the cosmological Euler–Poisson system—similar equations have also been derived and analyzed in our works \cite{Liu2022,Liu2022b,Liu2023a}. Furthermore, Noh and Hwang (see, e.g., \cite{Noh2004,Hwang2013a}) conducted a series of studies on derivations of second-order and higher-order approximations of the Einstein–Euler system in various gauges. By expanding Peebles’ equation and the Noh–Hwang equations while \textit{neglecting} tidal forces, shear effects, and rotational contributions, one obtains a second-order approximation structurally similar to \eqref{e:maineq0}. 
	In other words, \eqref{e:maineq0}, with $\ca,\cb,\cc$ satisfying \eqref{e:spabc}, captures key terms from Peebles' and Noh–Hwang’s equations.  
However, the remaining difficult terms in these equations are modeled by the source term $F(t)$, which is introduced to simplify the analysis while maintaining the correct order of the corresponding terms in the physical equations. 
This modification is one reason we refer to \eqref{e:maineq0} as a reduced model, where the source term helps to \textit{synchronize} the blow-up time. Removing this term requires additional ideas and further work, which we  address in our new article \cite{Liu2024a} and ongoing works. We will continue to explore these issues in future studies. 
\end{remark}
\begin{remark}
	Comparing the reduced model \eqref{e:maineq0} with Peebles' equation and the Noh-Hwang equations (see, e.g., \cite{Peebles2020,Noh2004,Hwang2013a,Liu2023a}), we examine how the expanding universe parameters are represented in the reduced model:  (1) the damping term coefficient $\frac{\ca}{t}$ characterizes the Hubble parameter; (2) $\mathcal{g}^{ij}$ reflects sound speeds, scale factors, and fluid velocities; (3) $\frac{\cb}{t^2}$ represents background density; while (4) $\frac{(\del{t}\varrho)^2}{1+\varrho}$ originates from fluid expansion.
\end{remark}

In conclusion, we will prove the following:

(1) The solution to the main equations \eqref{e:maineq0}–\eqref{e:maineq1}, when the initial data $\mathring{\varrho}(x^i)$ and $\mathring{\varrho}_0(x^i)$ deviate from positive constants $\mf$ and $\mf_0$ by small perturbations, undergoes nonlinear self-increasing. As a result, both $\varrho$ and $\varrho_0$ grow infinitely, leading to a blow-up.

(2) Furthermore, if the initial data also satisfies \begin{equation}\label{e:prdata} \mf_0 > \frac{(1-\ca)(1+\mf)}{(1-\cc) t_0}, \end{equation} then this self-increasing leads to a blow-up at a finite time, $t_m < \infty$ (i.e., a finite-time blow-up).

(3) Additionally, we estimate that the \textit{growth rate} of $\varrho$ is much faster than that of the classical linearized Jeans instability. Recall that the growth rate for the linear Jeans instability in an expanding Newtonian universe is of the order $\sim t^{\frac{2}{3}}$ (see \cite{Bonnor1957,Zeldovich1971,ViatcehslavMukhanov2013,Liu2022}). For example, in the special case \eqref{e:spabc}, the growth rate of $\varrho$ is at least of order $\sim \exp(t^{\frac{2}{3}})$. If the data satisfies \eqref{e:prdata}, the solution blows up at a finite time with a rate much faster than this exponential growth. We also provide (see Theorems \ref{t:mainthm0} and \ref{t:mainthm1} for detailed statements) a lower bound estimate for the growth rate in this case.

In conclusion, this reduced model suggests that the growth rate of the nonlinear Jeans instability is much faster than that of the classical (linear) instability. This is consistent with astrophysical observations and provides theoretical explanations for these observations. In other words, this article suggests that the deviations in the growth rates of $\varrho$ from the classical Jeans instability are due to nonlinear effects.

The \textit{basic idea} behind proving the ODE-type blowups (i.e., the solution itself becomes infinite at some time $t_m$ through self-increasing) is to treat the target equations \eqref{e:maineq0} as nonlinear perturbations of the ODE \eqref{e:feq0}. We aim to show that the solution to the main equation \eqref{e:maineq0} is dominated by the solution to the ODE \eqref{e:feq0} (see Theorem \ref{t:mainthm1} for precise expressions). In other words,  the solution to \eqref{e:maineq0} behaves like a small perturbation of the solution $f(t)$ to the ODE \eqref{e:feq0}, with given data functions $\mathring{\varrho}$ and $\mathring{\varrho}_0$ perturbed slightly around $\mf$ and $\mf_0$. This approach indicates the basic blowup mechanism of \eqref{e:maineq0}. To achieve this, we use the Cauchy problem approach for Fuchsian systems, as introduced by Oliynyk \cite{Oliynyk2016a} (see \eqref{e:model1}–\eqref{e:model2} in Appendix \ref{s:fuc} for details).

On the other hand, following this approach,  to understand the behavior of $\varrho$ and determine its growth rate, we must first characterize the reference solution $f$ to the ODE  \eqref{e:feq0}--\eqref{e:feq1}. However, this ODE system cannot be easily or directly solved. Therefore, we estimate the solution's behavior by establishing upper and lower bounds for both  $f$ and $\partial_t f$ using suitable increasing functions. These bounds consequently determine the behavior of  $\varrho$ (see Theorem \ref{t:mainthm0} for precise statements).

In the following, we first state the main theorems of this article, then briefly outline their proofs and the structure of the paper. After that, we provide a brief review of relevant works and establish the notations used throughout this article.

\subsection{Main theorems}\label{s:mthm}
In this section, we present the main theorems of this article, stated in Theorems \ref{t:mainthm0} and \ref{t:mainthm1}. As mentioned earlier, in order to understand the behavior of $\varrho$, we first characterize the reference solution $f$. Therefore, Theorem \ref{t:mainthm0} provides a description of the reference solution $f$ to equations \eqref{e:feq0}--\eqref{e:feq1}. Based on this, we then state Theorem \ref{t:mainthm1}, which describes the solution $\varrho$ to the main equation \eqref{e:maineq0}--\eqref{e:maineq1} as a perturbation of $f$.

From now on, to simplify notation (with parameter domains $\ca, \cb, \cc$ as in \eqref{e:abcdk}), we define 
\begin{equation}\label{e:deftr}
	\triangle:=\sqrt{(1-\ca)^2+4\cb}>-\ba, \quad \ba=1-\ca<0, \quad \bc=1-\cc<0  , 
\end{equation}
and introduce the constants $\mathtt{A}$, $\mathtt{B}$, $\mathtt{C}$, $\mathtt{D}$, and $\mathtt{E}$, which depend on the initial data $\mf$ and $\mf_0$ for \eqref{e:feq0}--\eqref{e:feq1} and the parameters $\ca$, $\cb$, and $\cc$:
\begin{align*}
	\mathtt{A}:=&\frac{t_0^{   -\frac{\ba-\triangle}{2}  }}{\triangle}\biggl(  \frac{t_0   \mf_0}{(1+\mf)^2} - \frac{\ba+\triangle}{2}  \frac{\mf  } {1+\mf} \biggr), \\
	\mathtt{B}:= &  \frac{t_0^{-\frac{\ba+\triangle}{2} }}{\triangle} \Bigl( \frac{\ba-\triangle}{2}  \frac{\mf }{1+\mf}  -\frac{t_0 \mf_0}{(1+\mf)^2} \Bigr)<0, \\
	\mathtt{C}:= & \frac{2 } {2+\ba+\triangle } \left(\max\left\{\frac{\ba+\triangle}{2 \cb },1\right\}\right)^{-1} \Bigl( \ln( 1+\mf) +\frac{\ba+\triangle}{2\cb} \frac{t_0\mf_0}{1+\mf}\Bigr)t_0^{-\frac{\ba+\triangle}{2 }} >0,  \\
	\mathtt{D}:= &
	\frac{ \ba+\triangle   }{2+\ba+\triangle}  \Bigl(  \ln( 1+\mf)  - \frac{1 } { \cb} \frac{t_0\mf_0}{1+\mf}\Bigr) t_0,   \\
	\mathtt{E}:=  &
	\frac{\bc  \mf_0 t_0^{1-\ba}  }{  \ba  (1+\mf) } >0.
\end{align*}
We define two critical times $t_\star$ and $t^\star$.

\begin{definition}\label{t:tdef}
	Suppose $\mathtt{A},\;\mathtt{B}, \;\mathtt{E},\;\ba$ and $\triangle$ are defined as above, then
	\begin{enumerate}
		\item
		Let $\mathcal{R}:=\{t_r>t_0 \;|\;\mathtt{A} t_r^{\frac{\ba-\triangle}{2} } + \mathtt{B} t_r^{\frac{\ba+\triangle}{2} } + 1 =0\}$. If $\mathcal{R}$ has a minimum, we define $t_\star:=\min \mathcal{R}$.
		\item If $t_0^{\ba}> \mathtt{E}^{-1}$, we define $t^\star :=    (t_0^{\ba}- \mathtt{E}^{-1} )^{1/\ba}\in(0,\infty)$, i.e.,  $t=t^\star$ solves $1-\mathtt{E}  t_0^{\ba} +  \mathtt{E}   t^{\ba}=0$.
	\end{enumerate}
\end{definition}

With these definitions established, we now state the main theorems.

	\begin{theorem}\label{t:mainthm0}
		Suppose the constants $\ca$, $\cb$ and $\cc$ are defined by  \eqref{e:abcdk}, and $t_\star$ and $t^\star$ are defined in Definition \ref{t:tdef}. Let the initial data satisfy  $\mf, \mf_0>0$. Additionally, define $f_0(t):=\del{t} f(t)$. Then
		\begin{enumerate}
			\item $t_\star:=\min \mathcal{R} \in[0,\infty)$ exists and $t_\star>t_0$;
			\item there is a constant $t_m\in [t_\star,\infty]$, such that there is a unique solution $f\in C^2([t_0,t_m))$ to the equation \eqref{e:feq0}--\eqref{e:feq1}, and
			\begin{equation}\label{e:limf}
				\lim_{t\rightarrow t_m} f(t)=+\infty \AND \lim_{t\rightarrow t_m} f_0(t)=+\infty .
			\end{equation}
		\item  the function $f$ satisfies the following upper and lower bound estimates
		\begin{align*}
			1+f(t)>&\exp \bigl( \mathtt{C} t^{\frac{\ba+\triangle}{2} }  +\mathtt{D}  t^{-1}\bigr)      &&\text{for}\quad t\in(t_0,t_m);
			\\
			1+f(t) < & \bigl(\mathtt{A} t^{\frac{\ba-\triangle}{2} } + \mathtt{B} t^{\frac{\ba+\triangle}{2} } + 1 \bigr)^{-1}    && \text{for}\quad t\in(t_0,t_\star).
		\end{align*}
		\end{enumerate}		
		Furthermore, if the initial data additionally satisfies \eqref{e:prdata}, i.e.,
		$\mf_0 >  \ba(1+\mf) /(\bc t_0 )$,
		then
		\begin{enumerate}
			\setcounter{enumi}{3}
		\item  both	$t_\star$  and $t^\star$  exist and are finite, satisfying $t_0<t_\star<t^\star<\infty$;
		\item there exists a finite time $t_m\in [t_\star,t^\star)$, such that there is a solution $f\in C^2([t_0,t_m))$ to  equation \eqref{e:feq0} with   initial data \eqref{e:feq1},   and 	\begin{equation*}
			\lim_{t\rightarrow t_m} f(t)=+\infty \AND \lim_{t\rightarrow t_m} f_0(t)=+\infty .
		\end{equation*}
	\item the solution $f$ satisfies an improved lower bound estimate for $t\in(t_0,t_m)$,
	\begin{equation*}\label{e:ipvest}
		(1+\mf)  \bigl(1-\mathtt{E}  t_0^{\ba} +  \mathtt{E}   t^{\ba} \bigr)^{1/\bc}  < 1+f(t) .
	\end{equation*}
		\end{enumerate}
	\end{theorem}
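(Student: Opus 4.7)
My plan is to reduce the quasilinear ODE \eqref{e:feq0}--\eqref{e:feq1} to linear Euler-type comparison problems via three specific algebraic substitutions---$u = (1+f)^{-1}$, $g = \ln(1+f)$, and $w = (1+f)^\bc$---each tailored to yield one of the three bounds in parts (3) and (6); parts (1), (2), (4), (5) will then follow from the qualitative behavior of the resulting comparison functions together with standard ODE continuation. For the \emph{upper bound} of part (3) I set $u = (1+f)^{-1}$, which turns \eqref{e:feq0} into
\begin{equation*}
u'' + \frac{\ca}{t}u' - \frac{\cb}{t^2}(u - 1) = (2-\cc)\frac{(u')^2}{u}.
\end{equation*}
Because $1 < \cc < 3/2$ and $u > 0$ make the right-hand side non-negative, $u$ satisfies $L u \geq -\cb/t^2$ with $L := \del{t}^2 + (\ca/t)\,\del{t} - \cb/t^2$. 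Since $U \equiv 1$ is a particular solution and $t^{\lambda_\pm}$, $\lambda_\pm = (\ba \pm \triangle)/2$, are the homogeneous solutions, matching $u(t_0)$ and $u'(t_0)$ pins down the reference $U_{\mathrm{lin}}(t) = 1 + \mathtt{A} t^{\lambda_-} + \mathtt{B} t^{\lambda_+}$. Promoting the inequality to the pointwise bound $u \geq U_{\mathrm{lin}}$ is done by Sturm comparison: substituting $u - U_{\mathrm{lin}} = t^{\lambda_-} z$ reduces $L(u - U_{\mathrm{lin}}) \geq 0$ to the first-order form $(t^{1-\triangle} z')' \geq 0$, and the initial conditions $z(t_0) = z'(t_0) = 0$ then force $z \geq 0$; inverting yields the claimed upper bound on $(t_0, t_\star)$.

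For the \emph{lower bound} of part (3) I use the logarithmic substitution $g = \ln(1+f)$, which converts \eqref{e:feq0} into
\begin{equation*}
g'' + \bc(g')^2 + \frac{\ca}{t}g' = \frac{\cb}{t^2}(e^g - 1).
\end{equation*}
Since $\bc < 0$ and $e^g - 1 \geq g$ for $g \geq 0$, $g$ satisfies $Lg \geq 0$, and the same Sturm device (now reversed) produces $g \geq \alpha t^{\lambda_-} + \beta t^{\lambda_+}$ with $\alpha, \beta$ determined by $g(t_0), g'(t_0)$. The statement's constants $\mathtt{C}, \mathtt{D}$ are designed so that $\mathtt{C}t^{\lambda_+} + \mathtt{D}t^{-1}$ matches $g(t_0) = \ln(1+\mf)$ and coincides exactly with the Sturm reference in the distinguished regime \eqref{e:spabc}, where $\lambda_- = -1$. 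For the \emph{improved lower bound} of part (6) I use the multiplicative substitution $w = (1+f)^\bc$; the choice $\bc = 1-\cc$ exactly annihilates the $(f')^2/(1+f)$ nonlinearity in \eqref{e:feq0}, producing
\begin{equation*}
w'' + \frac{\ca}{t}w' = \frac{\cb\bc}{t^2}\,w f.
\end{equation*}
Since $\cb\bc < 0$ and $wf > 0$, one has $(t^{\ca} w')' < 0$; the reference $W$ solving $W'' + (\ca/t)W' = 0$ with matched initial data is exactly $W(t) = (1+\mf)^\bc(1 - \mathtt{E}t_0^\ba + \mathtt{E}t^\ba)$, and a single integration of the one-sided inequality forces $w < W$. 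Raising to the $(1/\bc)$-th power reverses the inequality because $\bc < 0$, producing the stated lower bound $(1+\mf)(1 - \mathtt{E}t_0^\ba + \mathtt{E}t^\ba)^{1/\bc} < 1 + f$.

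With the two bounds in hand, the remaining assertions are elementary. Since $\mathtt{B} < 0$ and $\lambda_+ > 0$, the function $\mathtt{A}t^{\lambda_-} + \mathtt{B}t^{\lambda_+} + 1$ equals $(1+\mf)^{-1} > 0$ at $t_0$ and tends to $-\infty$ as $t \to \infty$, hence has a smallest zero $t_\star > t_0$, proving (1). The hypothesis \eqref{e:prdata} is algebraically equivalent to $\mathtt{E}t_0^\ba > 1$, which is exactly the condition for $1 - \mathtt{E}t_0^\ba + \mathtt{E}t^\ba$ to vanish at a unique finite $t^\star > t_0$; the ordering $t_\star < t^\star$ follows because otherwise the divergent lower bound at $t^\star$ would contradict the upper bound's finiteness on $(t_0, t_\star)$. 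Local existence of \eqref{e:feq0}--\eqref{e:feq1} is classical on $\{t > 0, f > -1\}$; the upper bound keeps $f$ finite on $[t_0, t_\star)$, so the maximal existence time satisfies $t_m \geq t_\star$, giving (2), and under \eqref{e:prdata} the improved lower bound diverges at $t^\star$, forcing $t_m < t^\star$ and giving (5). The limits \eqref{e:limf} follow from the divergence of whichever lower bound applies as $t \to t_m$, combined with the ODE \eqref{e:feq0} to transfer the blowup of $f$ to $\del{t}f$. The main technical obstacle I anticipate is verifying, for parameters outside the distinguished regime \eqref{e:spabc}, that the specific re-expression in the basis $\{t^{\lambda_+}, t^{-1}\}$ genuinely dominates the Sturm reference $\alpha t^{\lambda_-} + \beta t^{\lambda_+}$ on $(t_0, t_m)$, which may need a supplementary monotonicity argument that I would work out only after writing the Sturm estimate in full.
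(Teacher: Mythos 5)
Your three algebraic substitutions are all correct (I verified $u=(1+f)^{-1}$, $g=\ln(1+f)$, $w=(1+f)^{\bc}$ each yield the second-order equations you state), and the Sturm device $v=t^{\lambda_-}z \Rightarrow Lv = t^{\lambda_+-1}(t^{1-\triangle}z')'$ is the right reduction. This is a genuinely different, cleaner route from the paper: the paper instead passes to first-order systems in $(\mathcal{Q},\mathcal{Y})=(q_0/y,1/y)$ for the upper bound, $(\alpha,q_0)=(q/t,q'/q)$ for the lower bound, and $(X,Y)=(q_0y^{\bc},y^{\bc})$ for the improved lower bound, then uses ad hoc integrating factors. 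The substitutions are the same linearizations; you do Sturm where they integrate directly. Your upper-bound and improved-lower-bound arguments are correct and essentially equivalent in content to the paper's; so is the qualitative analysis giving parts (1), (4), (5), though for the blowup limits in (2) and (5) you should be explicit that when $t_m<\infty$ one needs the ODE continuation principle (as the paper does via Lemma~\ref{t:f0fg}.(1) and \eqref{e:foupbd}) to rule out $f$ staying bounded while $f_0$ blows up, and to transfer $f\to\infty$ to $f_0\to\infty$ one needs the representation \eqref{e:f0frl}, not merely \eqref{e:feq0}.

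The gap you flag for the lower bound of part (3) is real, and your instinct that it only closes in the ``distinguished regime'' is essentially right. Your Sturm comparison gives the tangent bound $g>\alpha t^{\lambda_-}+\beta t^{\lambda_+}$, but the theorem asserts $g>\mathtt{C}t^{\lambda_+}+\mathtt{D}t^{-1}$, and these coincide precisely when $\lambda_-=-1$, i.e.\ $\ca+\cb=2$. For general parameters one can check that at $t_0$ the right-hand side $\mathtt{C}t^{\lambda_+}+\mathtt{D}t^{-1}$ equals $g(t_0)$ but has derivative $k\,q_0(t_0)$ with $k=\lambda_+/\cb=(\ba+\triangle)/(2\cb)$, whereas $g'(t_0)=q_0(t_0)$; since $k\le 1\iff \ca+\cb\ge 2$, when $\ca+\cb<2$ the stated bound is \emph{steeper} than $g$ at $t_0$, and the strict inequality fails for $t$ just past $t_0$. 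The paper's own proof of \eqref{e:fbds} has exactly this hidden dependence: it derives $\del{t}[t^{1-k\cb}(\alpha+kq_0)]>0$, hence $\alpha+kq_0>C_0t^{k\cb-1}$, and then passes to $\alpha+q_0>C_0t^{k\cb-1}$ (its \eqref{e:aqineq1}), a step that uses $q_0\ge kq_0$, i.e.\ $k\le 1$. So the correct statement for general \eqref{e:abcdk} is your Sturm bound in the Euler basis $\{t^{\lambda_-},t^{\lambda_+}\}$; the form with $\{t^{\lambda_+},t^{-1}\}$ requires the supplementary hypothesis $\ca+\cb\ge 2$, which does hold in the physical regime \eqref{e:spabc} but is not implied by \eqref{e:abcdk}. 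You should not expect a ``supplementary monotonicity argument'' to bridge this in general; rather, either impose $\ca+\cb\ge 2$ or restate the bound.
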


\begin{theorem}\label{t:mainthm1}
	Suppose $s\in \Zbb_{\geq \frac{n}{2}+3}$, and let  $\ca, \cb, \cc, \ck$ be constants satisfying  \eqref{e:abcdk}.  Assume that $f\in C^2([t_0,t_m))$ is the solution to \eqref{e:feq0}--\eqref{e:feq1} given by Theorem \ref{t:mainthm0}, where $\mf>0$ and $\mf_0>0$ are prescribed. Furthermore, assume that $t_m>t_0$ so that $[t_0,t_m)$ is the maximal interval of existence for $f$ as stated in Theorem \ref{t:mainthm0}. 
	Then, there exist small constants  $\sigma_\star,\sigma>0$, such that if the initial data satisfies
	\begin{equation}\label{e:mtdata}
		\Bigl\|\frac{\mathring{\varrho}}{\mf}-1\Bigr\|_{H^s(\Tbb^n)}+  	\Bigl\|\frac{\mathring{\varrho}_0}{\mf_0}-1\Bigr\|_{H^s(\Tbb^n)}+  	\Bigl\|\frac{\cm  \mathring{\varrho}_i}{1+\mf}\Bigr\|_{H^s(\Tbb^n)} \leq \frac{1}{2} \sigma_\star\sigma ,
	\end{equation}
	then there is a  solution $\varrho\in C^2([t_0,t_m)\times \Tbb^n)$ to the equation \eqref{e:maineq0}--\eqref{e:maineq1} satisfying the estimate
	\begin{equation}\label{e:mainest}
		\Bigl\|\frac{\varrho(t)}{f(t)}-1\Bigr\|_{H^s(\Tbb^n)}+  	\Bigl\|\frac{\del{t}\varrho(t)}{f_0(t)}-1\Bigr\|_{H^s(\Tbb^n)}+ 	\Bigl\|\frac{\cm \del{i}\varrho(t)}{1+f(t)}\Bigr\|_{H^s(\Tbb^n)}\leq   C \sigma<1
	\end{equation}
	for $t\in[t_0,t_m)$, where $C>0$ is some constant.
	Moreover, $\varrho$  blows up at $t=t_m$, i.e.,
	\begin{equation*}
		\lim_{t\rightarrow t_m} \varrho(t,x^i)=+\infty \AND \lim_{t\rightarrow t_m} \varrho_0(t,x^i)=+\infty ,
	\end{equation*}
	with the rate estimates $(1-C\sigma)f \leq  \varrho \leq (1+C\sigma)f$ and $(1-C\sigma)f_0 \leq \varrho_0 \leq (1+C\sigma)f_0$ 	for $t\in[t_0,t_m)$.
\end{theorem}

\subsection{Overviews and outlines}

According to the basic idea mentioned above, this article is organized into two main sections:

In \S\ref{t:refsol}, we have two main objectives. The first is to study the reference solution $f(t)$ of the ODE system \eqref{e:feq0}--\eqref{e:feq1} in detail and to prove Theorem \ref{t:mainthm0}. We establish upper and lower bounds for $f(t)$ and its derivative $\del{t} f(t)$ using certain increasing functions. The second objective is to introduce three special functions, $g(t)$, $\chi(t)$, and $\xi(t)$ (defined in \eqref{e:gdef}, \eqref{e:Gdef}, and \eqref{e:xidef}, respectively), and to analyze their key properties, such as bounds and limits as $t$ approaches $t_m$, where $t_m$ is the maximal time of existence for $f$, as stated in Theorem \ref{t:mainthm0}. 
We study these functions because they play a crucial role in \S\ref{s:fuchian} when rewriting the main equation \eqref{e:maineq0} into its Fuchsian form (see \eqref{e:model1}--\eqref{e:model2} in Appendix \ref{s:fuc}). For instance, $g(t)$ defines the time transformation $\tau = -g(t)$, which compactifies the time domain from $[t_0,t_m)$ to $[-1,0)$, mapping the maximal time $t = t_m$ to $\tau = 0$. The Fuchsian system is then formulated using this compactified time coordinate. Similarly, $\chi(t)$ and $\xi(t)$ are essential for distinguishing singular terms from regular ones in the Fuchsian formulation—specifically, the singular term $\frac{1}{t} \textbf{B}(t,x,u) \textbf{P}u$ and the regular term $H(t,x,u)$ in the Fuchsian system \eqref{e:model1}--\eqref{e:model2}.

In \S\ref{s:fuchian}, we begin analyzing the main equation \eqref{e:maineq0}, or equivalently, the nonlinear perturbation equation obtained by subtracting \eqref{e:feq0} from \eqref{e:maineq0}, to prove Theorem \ref{t:mainthm1}. Our approach consists of seven steps. As mentioned earlier, the fundamental idea is to apply the Cauchy problem method for Fuchsian systems, as outlined in Appendix \ref{s:fuc}. The first challenge is to rewrite this perturbation equation into a Fuchsian formulation (see Steps $1$–$3$ in \S\ref{s:fuchian}). As noted in our previous works \cite{Liu2018b,Liu2018,Liu2022a,Liu2018a,Liu2022}, one of the key and most difficult steps in this process is introducing appropriate new variables (i.e., \textit{suitable Fuchsian fields}) and \textit{suitable  time compactifications}. 
In this article, we find that the most crucial and challenging construction is to define the compactified time as follows:  \begin{align}\label{e:ttf1}
	\tau := -g(t)=& -\exp\Bigl(-A\int^t_{t_0} \frac{f(s)(f(s)+1)}{s^2f_0(s)} ds\Bigr)  \notag  \\
	= & -\Bigl(1+ \cb B \int^t_{t_0} s^{\ca-2} f(s)(1+f(s))^{1-\cc}  ds \Bigr)^{-\frac{A}{\cb}}\in[-1,0),
\end{align} as given in \eqref{e:ttf}, where $A\in(0,2\cb/(3-2\cc))$ and $B:= (1+\mf)^\cc/( t_0^{ \ca} \mf_0)$. Additionally, we introduce the Fuchsian fields by weighting the perturbation variables with appropriate increasing functions (see \eqref{e:u}–\eqref{e:ui}). With this setup, the main equation \eqref{e:maineq0} eventually transforms into a Fuchsian system, which is verified in Step $4$. 
In Steps $5$ and $6$, by choosing the initial data at $\tau = -1$ to be sufficiently small, standard energy estimates ensure that the local solution can be extended for a certain time. We then apply the global existence theorem for Fuchsian systems (Theorem \ref{t:fuc}) and obtain a global solution for $\tau \in [-1, 0)$, which corresponds to $t \in [t_0, t_m)$.
Finally, by transforming the Fuchsian fields back to the original variables $(\varrho, \del{t} \varrho, \del{i} \varrho)$, we establish the self-increasing blowup behavior of $\varrho$ and $\del{t} \varrho$, thereby completing the proof.

\subsection{Related works}

The blowup problem has been widely studied for various hyperbolic equations, particularly for wave-type equations. For example, Speck \cite{Speck2020, Speck2017} studied stable ODE-type blowup results for quasilinear wave equations featuring a Riccati-type derivative-quadratic semilinear term and a form of stable Tricomi-type degeneracy formation in a specific wave equation. We refer readers to \cite{Alinhac1995, Kichenassamy2021} for an introduction to blowups in various hyperbolic systems. We also recommend consulting Speck \cite{Speck2020, Speck2017} for detailed reviews of various blowup results and their basic proof techniques.

The key idea for the global solutions presented in this article is the Cauchy problem for Fuchsian systems. The approach of using Fuchsian formulations to solve the global problem was first introduced by Oliynyk \cite{Oliynyk2016a} and later developed and applied to various problems in a series of works, such as \cite{Ames2022, Ames2022a, Beyer2021, Beyer2020, Beyer2025, Fajman2021, Fajman2023, Fajman2025, Beyer2024, Beyer2024b, Beyer2020a, Oliynyk2021, Oliynyk2024, Liu2018b, Liu2018, Liu2022a, Liu2022, Liu2022b, Liu2018a, Liu2024a, Fournodavlos2024, Marshall2023}.

In addition, we note that recent works have focused on gravitational collapse \cite{Guo2018, Guo2021} and the formation of implosion singularities \cite{Merle2022}, both of which lead to mass accretion and may have connections to the Jeans instability.

\subsection{Notations}\label{s:AIN}
Unless stated otherwise, we will apply the following conventions of notations throughout this article without recalling their meanings in the following sections.

\subsubsection{Indices and coordinates}\label{iandc}
Unless stated otherwise, our indexing convention will be as follows: we use lowercase Latin letters, e.g. $i, j,k$, for spatial indices that run from $1$ to $n$, and lowercase Greek letters, e.g. $\alpha, \beta, \gamma$, for spacetime indices
that run from $0$ to $n$.  We will follow the \textit{Einstein summation convention}, that is, repeated lower and upper indices are implicitly summed over. We use $x^{i}$ ($i=1, \cdots, n$) to denote the standard periodic coordinates on the
$n$-torus $\mathbb{T}^n$ and $t = x^{0}$ as the time coordinate on the interval $[t_0,\infty)$.

This article involves two different time coordinates: the open time $t\in[t_0,\infty)$ and the compactified time $\tau=-g(t)\in [-1,0)$, as given by \eqref{e:ttf1} (see details in \S\ref{s:stp2}). For scalar functions $f(t,x^i)$, we always use
\begin{equation} \label{e:udl}
	\underline{f}(\tau,x^i):=  f(g^{-1}(-\tau), x^i)
\end{equation}
to denote the representation of $f$ in compactified time coordinate $\tau$ throughout this article.

\subsubsection{Derivatives}
Partial derivatives with respect to the  coordinates $(x^\mu)=(t,x^i)$ will be denoted by $\partial_\mu = \partial/\partial x^\mu$. We use
$Du=(\partial_j u)$ and $\partial u = (\partial_\mu u)$ to denote the spatial and spacetime gradients, respectively, with respect to these coordinates.
We also use Greek letters to denote multi-indices, e.g.
$\alpha = (\alpha_1,\alpha_2,\ldots,\alpha_n)\in \mathbb{Z}_{\geq 0}^n$, and employ the standard notation $D^\alpha = \partial_{1}^{\alpha_1} \partial_{2}^{\alpha_2}\cdots
\partial_{n}^{\alpha_n}$ for spatial partial derivatives. It will be clear from context whether a Greek letter stands for a spacetime coordinate index or a multi-index.

\subsubsection{Function spaces, inner-products and matrix inequalities}\label{s:funsp}

Given a finite-dimensional vector space $V$, we let
$H^s(\mathbb{T}^n,V)$, $s\in \mathbb{Z}_{\geq 0}$,
denote the space of maps from $\mathbb{T}^n$ to $V$ with $s$ derivatives in $L^2(\Tbb^n)$. When the
vector space $V$ is clear from context, for example, $V=\Rbb^N$, we write $H^s(\mathbb{T}^n)$ instead of $H^s(\mathbb{T},V)$.
Letting
\begin{equation*}
	\langle{u,v\rangle} = \int_{\mathbb{T}^n} (u(x),v(x))\, d^n x,
\end{equation*}
where $(\cdot,\cdot)$
is the Euclidean inner product on $\Rbb^N$ (i.e., $(\xi,\zeta)=\xi^T\zeta$ for any $\xi, \zeta\in \Rbb^N$), denote the standard $L^2$ inner product, and the $H^s$ norm is defined by
\begin{equation*}
	\|u\|_{H^s}^2 = \sum_{0\leq |\alpha|\leq s} \langle D^\alpha u, D^\alpha u \rangle.
\end{equation*}

For matrices $A,B\in \mathbb{M}_{N\times N}$, we define
\begin{equation*}
	A\leq B \quad \Leftrightarrow \quad  (\zeta,A\zeta)\leq (\zeta,B\zeta),  \quad \forall \zeta\in \Rbb^N.
\end{equation*}


\section{The analysis of the reference solutions} \label{t:refsol}

This section accomplishes two main objectives: first, to establish Theorem \ref{t:mainthm0} concerning the reference solutions $f$ of the ODE system \eqref{e:feq0}--\eqref{e:feq1}; and second, to analyze three key functions, $g(t)$, $\chi(t)$ and $\xi(t)$, which are essential for the subsequent Fuchsian formulation.

\subsection{The time Transformation  function $g(t)$ and preliminary facts}\label{t:ttf}

We define a useful function  $g(t)$, which will appear frequently in later calculations and play an important role in the Fuchsian formulation (see \S\ref{s:fuchian}), as it serves as a \textit{compactified time transformation}. Let
\begin{align}\label{e:gdef}
	g(t):=\exp\Bigl(-A\int^t_{t_0} \frac{f(s)(f(s)+1)}{s^2 f_0(s)} ds \Bigr)>0
\end{align}
where $A\in(0,2\cb/(3-2\cc))$ is a constant.

The following lemma provides an alternative representation of $g(t)$ that involves only $f$,  without $f_0$, establishes fundamental properties of $g(t)$, and expresses $f_0$ in terms of $f$ and $g$.  These representations help eliminate $f_0$ in later calculations.

\begin{lemma}\label{t:f0fg}
	Suppose $f\in C^2([t_0,t_1))$ ($t_1>t_0$) is a solution to equations  \eqref{e:feq0}--\eqref{e:feq1}, $g(t)$ is defined by \eqref{e:gdef}, and  $f_0(t):=\del{t} f(t)$. Then
\begin{enumerate}	
	\item  $f_0$ can be expressed as
	\begin{align}\label{e:f0frl}
		f_0(t)=B^{-1} t^{-\ca} g^{-\frac{\cb}{A}}(t)(1+f(t))^{\cc} >0
	\end{align}
	for $t\in [t_0,t_1)$, where $B:= (1+\mf)^\cc/( t_0^{ \ca} \mf_0)>0 $ is a constant depending on the initial data;
	\item If the initial data satisfies $\mf>0$, then $f(t)>0$ for $t \in[t_0,t_1)$;
	\item $g(t)$ can be represented as
	\begin{equation}\label{e:gdef2}
		g (t) =\Bigl(1+ \cb B \int^t_{t_0} s^{\ca-2} f(s)(1+f(s))^{1-\cc}  ds \Bigr)^{-\frac{A}{\cb}}\in(0,1],
	\end{equation}
	for $t\in[t_0,t_1)$, with $g(t_0)=1$; 	
	\item
	$g(t)$ is strictly decreasing and  invertible on $[t_0,t_1)$.
\end{enumerate}
\end{lemma}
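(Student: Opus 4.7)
The plan is to establish the four statements in the order (2), (1), (3), (4), matching their logical dependencies: the positivity in (2) is needed to guarantee the integrand defining $g(t)$ in \eqref{e:gdef} is finite; the expression in (1) is an algebraic consequence of a conserved quantity for the ODE \eqref{e:feq0}; the integral representation in (3) follows by combining (1) with the ODE satisfied by $g$; and the monotonicity in (4) is immediate from the sign of the integrand in \eqref{e:gdef}.

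For (2), I will use a continuity argument directly on the ODE. Set
\[
t^\ast := \sup\bigl\{\, t\in[t_0,t_1) : f>0 \text{ and } f_0>0 \text{ on } [t_0,t)\,\bigr\},
\]
which is $>t_0$ by the data $\mf,\mf_0>0$ and continuity. If $t^\ast<t_1$, then at $t^\ast$ either $f(t^\ast)=0$ or $f_0(t^\ast)=0$. The first option is excluded since $f_0>0$ on $[t_0,t^\ast)$ makes $f$ strictly increasing, so $f(t^\ast)\geq \mf>0$. In the second case, $f(t^\ast)>0$ and reading \eqref{e:feq0} at $t^\ast$ gives $\del{t}f_0(t^\ast) = \cb f(t^\ast)(1+f(t^\ast))/(t^\ast)^2>0$, which contradicts $\del{t}f_0(t^\ast)\leq 0$ (a consequence of $f_0>0$ on $[t_0,t^\ast)$ with $f_0(t^\ast)=0$). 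Hence $t^\ast=t_1$ and both $f>0$ and $f_0>0$ on $[t_0,t_1)$.

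For (1), I will exhibit a conserved quantity by dividing \eqref{e:feq0} by $f_0$ (now nonzero by (2)) and matching the resulting expression against $\del{t}\ln g$. A direct computation gives
\[
\del{t}\bigl(\ln f_0 + \ca\ln t - \cc\ln(1+f)\bigr) = \frac{\cb f(1+f)}{t^2 f_0},
\]
while \eqref{e:gdef} yields $\del{t}\ln g = -A f(1+f)/(t^2 f_0)$. Adding $(\cb/A)$ times the second to the first, the quantity $\ln f_0 + \ca\ln t - \cc\ln(1+f) + (\cb/A)\ln g$ is constant on $[t_0,t_1)$. Evaluating at $t_0$ using $g(t_0)=1$ (immediate from \eqref{e:gdef}) and exponentiating produces \eqref{e:f0frl} with precisely $B=(1+\mf)^\cc/(t_0^\ca \mf_0)$, and positivity is clear since every factor on the right is positive.

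For (3) and (4), substituting (1) into $\del{t}\ln g = -Af(1+f)/(t^2 f_0)$ gives the closed ODE $\del{t}(g^{-\cb/A}) = \cb B\, t^{\ca-2} f(t)(1+f(t))^{1-\cc}$. Integrating from $t_0$ to $t$ with $g(t_0)=1$ delivers \eqref{e:gdef2}; the bound $g\in(0,1]$ follows since the integrand on the right is nonnegative and $g>0$ by the exponential definition. Statement (4) is then immediate from (2): the integrand in the exponent of \eqref{e:gdef} is strictly positive, so $\ln g$ is strictly decreasing, whence $g$ is strictly monotone and invertible on $[t_0,t_1)$. The only step requiring real care is (1), where the weights $\ca$, $-\cc$, $\cb/A$ must be chosen to match the coefficients in \eqref{e:feq0} so that the leftover derivative cancels exactly against $\del{t}\ln g$; beyond this identification, the proof is mostly bookkeeping.
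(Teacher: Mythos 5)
Your proof is correct, and it departs from the paper's in one substantive respect: the ordering and proof of part (2). The paper derives the representation \eqref{e:f0frl} first by log-derivative manipulations (dividing by $1+f$ and $f_0$, taking $\ln(f_0/(1+f))$), which silently presuppose $f_0>0$ and $1+f>0$ on all of $[t_0,t_1)$, and only afterwards deduces $f>0$ from the resulting inequality $(1+f)^{\bc}\leq(1+\mf)^{\bc}$ via a two-step intermediate-value contradiction. You invert the order: you establish positivity of $f$ and $f_0$ directly from the ODE by a continuity/bootstrap argument, observing that at a first time $t^\ast$ with $f_0(t^\ast)=0$ the equation forces $\partial_t f_0(t^\ast)=\cb f(t^\ast)(1+f(t^\ast))/(t^\ast)^2>0$, while one-sided monotonicity forces $\partial_t f_0(t^\ast)\leq 0$. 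This is logically cleaner: it justifies in advance the divisions and logarithms needed for (1), and (as you note) the finiteness of the integrand in \eqref{e:gdef} itself, neither of which the paper addresses explicitly before performing those manipulations. The remaining parts are essentially the same computation in both accounts: your ``conserved quantity'' $\ln f_0+\ca\ln t-\cc\ln(1+f)+(\cb/A)\ln g$ is just a repackaging of the paper's integration of \eqref{e:dtlnf0f} against the definition of $g$, and your derivations of (3) and (4) match the paper's, the only cosmetic difference being that you read off the strict monotonicity of $g$ directly from the positivity of the integrand in \eqref{e:gdef} rather than from the integral form \eqref{e:gba}.
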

\begin{proof}
\underline{$(1)\&(2)$:}	Multiplying \eqref{e:feq0} by $1/(1+f)$ on both sides yields, for $t\in[t_0,t_1)$  
	\begin{equation}\label{e:f0/f}
		\del{t}\Bigl(\frac{f_0}{1+f}\Bigr)=-\frac{\ca}{t}\Bigl(\frac{f_0}{1+f}\Bigr) +\frac{\cb}{t^2} f -(1-\cc) \frac{f_0^2}{(1+f)^2}.
	\end{equation}
	Multiplying  \eqref{e:f0/f} by $(1+f)/f_0$, we obtain
	\begin{equation*}
		\del{t}\ln \Bigl(\frac{f_0}{1+f}\Bigr) =-\frac{\ca}{t}+\frac{\cb}{t^2} \frac{f(1+f)}{f_0}+(\cc-1)\frac{f_0}{1+f}.
	\end{equation*}
Using the identity
	\begin{equation*}
		\frac{f_0}{1+f}=\frac{\del{t}f}{1+f}=\del{t}\ln(1+f),
	\end{equation*}
	we obtain
	\begin{equation*}
		\del{t}\ln \Bigl(\frac{f_0}{1+f}\Bigr) =-\frac{\ca}{t}+\frac{\cb}{t^2} \frac{f(1+f)}{f_0}+\del{t}\ln(1+f)^{\cc-1}.
	\end{equation*}
	This leads to
	\begin{equation}\label{e:dtlnf0f}
		\del{t}\ln \Bigl(\frac{f_0}{(1+f)^\cc}\Bigr)  =-\frac{\ca}{t}+\frac{\cb}{t^2} \frac{f(1+f)}{f_0}.
	\end{equation}

Integrating \eqref{e:dtlnf0f} and using the initial data \eqref{e:feq1}, along with the definition  \eqref{e:gdef}, we obtain
	\begin{equation}\label{e:f0f}
		\frac{f_0(t)}{(1+f(t))^\cc}=\frac{\mf_0}{(1+\mf)^\cc} \Bigl(\frac{t}{t_0}\Bigr)^{-\ca} \exp\Bigl( \cb \int^{t}_{t_0}\frac{f(s)(1+f(s))}{s^2 f_0(s)} ds\Bigr)
		=B^{-1} t^{-\ca} g^{-\frac{\cb}{A}}(t)\geq 0 ,
	\end{equation}
	 for $t\in[t_0,t_1)$. Hence, we conclude $f_0(t)=B^{-1} t^{-\ca} g^{-\frac{\cb}{A}}(t)(1+f(t))^{\cc}$.

Since $f_0=\del{t}f$,  using \eqref{e:f0f} gives an estimate for $t\in[t_0,t_1)$,
\begin{equation*}
	\del{t}\Bigl[\frac{1}{1-\cc}(1+f)^{1-\cc}\Bigr] \geq 0 . 
\end{equation*}
Integrating this inequality and noting that $\cc>1$,  we obtain $(1+f)^{\bc}\leq (1+\mf)^{\bc}$  for $t\in[t_0,t_1)$. Since $f\in C^2([t_0,t_1))$ is the solution to \eqref{e:feq0}--\eqref{e:feq1}, we prove $f(t)>0$ for any $t\in[t_0,t_1)$ by contradictions. If there is a time $t_s\in[t_0,t_1)$ such that $f(t_s) \in[-1,0]$, then $0\leq (1+f(t_s))^{\bc}\leq (1+\mf)^{\bc}$ where $\bc<0$, which implies $f(t_s)>\mf>0$, leading to a contradiction. Thus, $f(t)\notin [-1,0]$ for any $t\in[t_0,t_1)$. On the other hand, if there is a time $t_r\in[t_0,t_1)$ such that $f(t_r) <-1$. By the continuity of $f$ (since $f\in C^2$) and $\mf>0$, then there is a time $t_s\in(t_0,t_r)$, such that $f(t_s)\in[-1,0]$. This contradicts with the fact $f(t)\notin [-1,0]$ for any $t\in[t_0,t_1)$. Hence, $f(t)>0$ for all $t\in[t_0,t_1)$.
Therefore, using \eqref{e:f0frl}, we further conclude $f_0(t) > 0$ and $f(t) > \mf>0$ for $t > t_0$.

\underline{$(3)\&(4)$:}	We now prove \eqref{e:gdef2}. 	Differentiating \eqref{e:gdef} with respect to $t$ yields
	\begin{equation*}
		\del{t}g(t)=-A g(t)   \frac{f(t)(f(t)+1)}{t^2f_0(t)}.
	\end{equation*}
Substituting $f_0$ from \eqref{e:f0frl}, we obtain 
	\begin{equation}\label{e:dtg0}
		\del{t}g(t) =-A  B g^{\frac{\cb}{A}+1}(t)  t^{\ca-2} f(t) (1+f(t))^{1-\cc} .
	\end{equation}
	This leads to
	\begin{align}\label{e:dtgf}
		\del{t} g^{-\frac{\cb}{A}}(t) = -\frac{\cb}{A}g^{-\frac{\cb}{A}-1}\del{t} g = \cb B t^{\ca-2} f(t)  (1+f(t))^{1-\cc}.
	\end{align}
	Integrating   \eqref{e:dtgf} and noting that $g(t_0)=1$ (by \eqref{e:gdef}) and $f(t)>0$ for $t>t_0$, we obtain
	\begin{align}\label{e:gba}
		g^{-\frac{\cb}{A}}(t)
		=1+ \cb B \int^t_{t_0} s^{\ca-2} f(s)(1+f(s))^{1-\cc}  ds \geq 1
	\end{align}
	for $t\in[t_0,t_1)$, which implies \eqref{e:gdef2}.
	Since $ s^{\ca-2} f(s)(1+f(s))^{1-\cc} \in(0,\infty)$ is continuous for $s\in[t_0,t_1)$, it follows from \eqref{e:gba} that  $g^{-\frac{\cb}{A}}(t)$ is strictly increasing  for $t\in[t_0,t_1)$. Consequently, $g(t)$ strictly decreasing,  invertible,  and satisfies $g(t)\in (0,1]$ for $t\in[t_0,t_1)$.
	The proof is complete.
\end{proof}


\subsection{A priori estimates for $f(t)$ and the first-order system}

For the ODE \eqref{e:feq0}--\eqref{e:feq1}, we claim the following Proposition \ref{t:bksl}, which provides a priori estimates for the solution $f(t)$. To maintain the coherence of this article, we postpone the lengthy proof of Proposition \ref{t:bksl} to \S \ref{s:pfPro}. Before proceeding, we first rewrite the ODE \eqref{e:feq0} as a first-order system for later use.

To achieve this, we introduce a variable transformation:
\begin{equation}\label{e:ydef}
	y(t):=1+f(t).
\end{equation}
Substituting this into \eqref{e:feq0}, we obtain
\begin{align}\label{e:yeq}
	\partial^2_t y(t)+\frac{\ca}{t} \del{t} y(t)-\frac{\cb}{t^2} (y(t)-1) y(t) -\frac{\cc}{y(t)}(\del{t} y(t))^2=0 .
\end{align}

By Lemma \ref{t:f0fg}, the solution $f\in C^2$ to \eqref{e:feq0}--\eqref{e:feq1} must satisfy $f(t)>0$ and $f_0(t)>0$ for $t>t_0$ (i.e., $y(t)>1$ and $y_0(t):=\del{t}y(t)=f_0(t)>0$ for $t>t_0$). Hence, we are able to introduce the following variables to rewrite  \eqref{e:feq0}. We denote
\begin{align}\label{e:q0}
 q(t):=\ln y(t), \AND  q_0(t):=\del{t}q(t) .
\end{align}
Then \eqref{e:yeq} (i.e., \eqref{e:feq0}) becomes
\begin{equation*}
	\partial^2_t q(t)+\frac{\ca}{t} \del{t} q(t) -\frac{\cb}{t^2} (e^{q(t)}-1) +(1-\cc) (\del{t} q)^2=0.
\end{equation*}
Furthermore, we rewrite this equation in terms of  $(q, q_0)$ as a first-order system,
\begin{align}		
	\del{t} q(t)= & q_0(t),  \label{e:q1}  \\
	\del{t} q_0(t)=& -\frac{\ca}{t} q_0(t)+\frac{\cb}{t^2} (e^{q(t)}-1) -(1-\cc) q_0^2. \label{e:q2}
\end{align}
By \eqref{e:q0}--\eqref{e:q2}, we can also express \eqref{e:q1}--\eqref{e:q2} in terms of $(y,q_0)$ as follows,
\begin{align}
	\del{t}y=&e^q\del{t}q=y q_0,  \label{e:upbd1} \\
	\del{t} q_0 =& -\frac{\ca}{t} q_0 +\frac{\cb}{t^2} (y-1) -(1-\cc) q_0^2.   \label{e:upbd2}
\end{align}

In the following, for the sake of simplicity in expression, we introduce two functions
\begin{equation}\label{e:LF}
	\mathcal{F}(t):=\mathtt{A} t^{\frac{\ba-\triangle}{2} } + \mathtt{B} t^{\frac{\ba+\triangle}{2} } + 1 \AND \mathcal{L}(t) :=(1+\mf)^{\bc}  \bigl(1-\mathtt{E}  t_0^{\ba} +  \mathtt{E}   t^{\ba} \bigr) .
\end{equation}
We now state the a priori estimates for the solution $f(t)$. 
 \begin{proposition}\label{t:bksl}
	Suppose $f\in C^2([t_0,t_1))$ with $t_1>t_0$ is a solution to the ODE system  \eqref{e:feq0}--\eqref{e:feq1} and  let the constants $\mathtt{A}, \mathtt{B}, \mathtt{C}, \mathtt{D}, \mathtt{E}, \ba, \bc$ and $\triangle$ be as defined in  \S\ref{s:mthm}. Then, for $t\in (t_0,t_1)$, 
	the function $f$ satisfies the estimates
	\begin{gather}
		 \exp \bigl( \mathtt{C} t^{\frac{\ba+\triangle}{2} }  +\mathtt{D}  t^{-1}\bigr)  < 1+f(t) ,
	  \label{e:fbds}\\
	  	(1+f(t))^{\bc}<  (1+\mf)^{\bc}  \bigl(1-\mathtt{E}  t_0^{\ba} +  \mathtt{E}   t^{\ba} \bigr) =\mathcal{L}(t) \label{e:fbds1} \\
		\intertext{and}
		\mathcal{F}(t)=\mathtt{A} t^{\frac{\ba-\triangle}{2} } + \mathtt{B} t^{\frac{\ba+\triangle}{2} } + 1 <(1+f(t))^{-1}  . \label{e:fbds2}
	\end{gather}
	Moreover, for $t\in (t_0,t_1)$, the derivative $f_0(t)$ is bounded by 
	\begin{equation}\label{e:foupbd}
		0<f_0(t)< -\mathtt{B}\triangle t^{\frac{\triangle+\ba}{2}-1} (1+f)^2  .
	\end{equation}
Furthermore, if $t_\star$ exists and is defined by Definition \ref{t:tdef}, then the maximal existence interval $[t_0,t_m)$ of $f$ satisfies $t_m\geq t_\star$. As a direct corollary of the above results, the function $\mathcal{L}$ remains positive at  $t=t_\star$, i.e., $\mathcal{L}(t_\star)=(1+\mf)^{\bc}  \bigl(1-\mathtt{E}  t_0^{\ba} +  \mathtt{E}   t_\star^{\ba} \bigr) >0$.
\end{proposition}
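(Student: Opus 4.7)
The key is to work in the variable $q(t)=\ln(1+f(t))$ introduced in \eqref{e:q0}, which turns \eqref{e:feq0} into $\partial_t^2 q + \frac{\ca}{t}\partial_t q - \frac{\cb}{t^2}(e^q-1) + (1-\cc)(\partial_t q)^2 = 0$ together with the first-order system \eqref{e:q1}--\eqref{e:q2}. The central linear piece is the Euler-type operator $L u := \partial_t^2 u + \frac{\ca}{t}\partial_t u - \frac{\cb}{t^2}u$, whose indicial equation $\lambda^2+(\ca-1)\lambda-\cb=0$ has roots $\lambda_{\pm}=(\ba\pm\triangle)/2$, and which factors as $t^2 L=(t\partial_t-\lambda_+)(t\partial_t-\lambda_-)=(t\partial_t-\lambda_-)(t\partial_t-\lambda_+)$. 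This factorisation underpins the comparison arguments that deliver both the upper and the lower bounds.

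\textbf{Upper bound \eqref{e:fbds1}.} Using the explicit formula \eqref{e:f0frl}, I compute $\partial_t[(1+f)^{\bc}] = \bc B^{-1} t^{-\ca} g^{-\cb/A}(t)$. The bound $g^{-\cb/A}(t)\ge 1$ from \eqref{e:gba} combined with $\bc<0$ gives $\partial_t[(1+f)^{\bc}]\le \bc B^{-1}t^{-\ca}$. Integration from $t_0$ to $t$, together with the identity $\cc+\bc=1$ that rewrites the resulting coefficient as $(1+\mf)^{\bc}\mathtt{E}$, yields $(1+f)^{\bc}<\mathcal{L}(t)$; strictness comes from the strict inequality $g<1$ for $t>t_0$.

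\textbf{Upper bound \eqref{e:fbds2} and derivative bound \eqref{e:foupbd}.} I pass to $w=1/(1+f)$: a direct computation using \eqref{e:feq0} gives $Lw = -\cb/t^2 + (2-\cc)(\partial_t w)^2/w\ge -\cb/t^2$. By the choice of $\mathtt{A}$, $\mathtt{B}$ in \S\ref{s:mthm}, $\mathcal{F}$ is precisely the solution of $L\mathcal{F}=-\cb/t^2$ with $\mathcal{F}(t_0)=w(t_0)$ and $\partial_t\mathcal{F}(t_0)=\partial_t w(t_0)$, so $u:=w-\mathcal{F}$ satisfies $Lu\ge 0$ with $u(t_0)=\partial_t u(t_0)=0$. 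Using one ordering of the factorisation, $P:=(t\partial_t-\lambda_-)u$ obeys $\partial_t(t^{-\lambda_+}P)\ge 0$ and $P(t_0)=0$, so $P\ge 0$; this rearranges to $\partial_t(t^{-\lambda_-}u)\ge 0$, and $u(t_0)=0$ then forces $u\ge 0$. Strict positivity is secured by the source $(2-\cc)(\partial_t w(t_0))^2/w(t_0)>0$ (nonzero because $\mf_0>0$). The bound \eqref{e:foupbd} is extracted from the other ordering: $P':=(t\partial_t-\lambda_+)u$ satisfies $P'\ge 0$, i.e.\ $\partial_t u\ge \lambda_+ u/t\ge 0$, so $\partial_t w\ge \partial_t\mathcal{F}$. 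The algebraic identity $\partial_t\mathcal{F}=\mathtt{B}\triangle t^{\lambda_+-1}+\lambda_-(\mathcal{F}-1)/t$ together with $\lambda_-<0$ and $\mathcal{F}<1$ then yields $\partial_t w>\mathtt{B}\triangle t^{\lambda_+-1}$, which translates into \eqref{e:foupbd} via $f_0=-(1+f)^2\partial_t w$; positivity $f_0>0$ already comes from Lemma~\ref{t:f0fg}.

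\textbf{Lower bound \eqref{e:fbds} and concluding claims.} Dropping the non-negative terms $\cb(e^q-1-q)/t^2$ and $(\cc-1)q_0^2$ from the $q$-equation gives $Lq\ge 0$. I compare $q$ against the explicit barrier $\psi(t):=\mathtt{C}t^{\lambda_+}+\mathtt{D}/t$, whose constants are tuned so that $\psi(t_0)=\ln(1+\mf)$ and so that the sign of $L\psi=(2-\ca-\cb)\mathtt{D}/t^3$, together with the initial-slope discrepancy, is compatible with a monotonicity argument on the factored quantities $(t\partial_t-\lambda_\pm)(q-\psi)$. That delivers $\psi<q$ on $(t_0,t_m)$, which exponentiates to \eqref{e:fbds}. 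For the tail claims: if $t_m<t_\star$ then $w=1/(1+f)\to 0$ as $t\to t_m^-$, contradicting $\mathcal{F}>0$ on $[t_0,t_\star)$ via \eqref{e:fbds2}; hence $t_m\ge t_\star$. Once $t_\star\in[t_0,t_m)$, evaluating \eqref{e:fbds1} at $t=t_\star$ against $(1+f(t_\star))^{\bc}>0$ gives $\mathcal{L}(t_\star)>(1+f(t_\star))^{\bc}>0$.

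\textbf{Main obstacle.} The delicate part is the lower bound \eqref{e:fbds}: because $\partial_t\psi(t_0)$ generally differs from $q_0(t_0)$, the standard comparison principle for $L$ with matching initial data does not apply directly, and the barrier must be chosen so that the slope mismatch and the sign of $L\psi$ balance precisely. It is this balance that fixes the specific normalisations of $\mathtt{C}$ and $\mathtt{D}$ given in \S\ref{s:mthm}, and working it out is where the lengthy computation is concentrated.
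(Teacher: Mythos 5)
Your treatment of \eqref{e:fbds1}, \eqref{e:fbds2} and \eqref{e:foupbd} is correct and in two cases genuinely cleaner than the paper's. For \eqref{e:fbds1} you bypass the paper's auxiliary system in $(X,Y)=(q_0 y^{1-\cc},y^{1-\cc})$ entirely: since $\partial_t[(1+f)^{\bc}]=\bc\,B^{-1}t^{-\ca}g^{-\cb/A}(t)$ by \eqref{e:f0frl} and $g^{-\cb/A}\ge 1$ by \eqref{e:gba} with $\bc<0$, one integration gives $\mathcal{L}(t)$ directly; this is a one-line derivation of what the paper obtains by a two-stage linear-combination argument. For \eqref{e:fbds2} and \eqref{e:foupbd} your factorisation $t^2L=(t\partial_t-\lambda_+)(t\partial_t-\lambda_-)$ applied to $u=1/(1+f)-\mathcal{F}$ reproduces, in more transparent form, the paper's computations with $(\mathcal{Q},\mathcal{Y})=(q_0/y,1/y)$ and the auxiliary constant $\ell=\lambda_-$ --- the matched initial data $u(t_0)=u'(t_0)=0$ make both orderings of the factorisation effective, and the identity $\partial_t\mathcal{F}=\mathtt{B}\triangle t^{\lambda_+-1}+\lambda_-(\mathcal{F}-1)/t$ you use to pass from $\partial_t w\ge\partial_t\mathcal{F}$ to \eqref{e:foupbd} is exactly right. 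The tail claims are fine once one invokes the continuation principle (Corollary~\ref{t:contthm2}) and \eqref{e:foupbd} to ensure the only escape mechanism at a finite $t_m$ is $f\to\infty$.

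The real problem is \eqref{e:fbds}, where you concede the argument is incomplete --- and the issue you flag is not a technicality you can wave away but the crux of the proof. Unlike in the $w$-comparison, here $v:=q-\psi$ has $v(t_0)=0$ but $v'(t_0)=(1-k)q_0(t_0)$ with $k=\lambda_+/\cb$, so the slope mismatch vanishes only when $k=1$, i.e.\ when $\ca+\cb=2$; moreover $L\psi=(2-\ca-\cb)\mathtt{D}/t^3$ carries a sign that depends on both $(\ca+\cb-2)$ and $\mathtt{D}$, and neither is sign-definite under \eqref{e:abcdk}. Your paragraph asserts the signs "balance precisely" but gives no mechanism: the factored-quantity monotonicity you used for $u$ relied on matched data, and you do not say which ordering of the factorisation, with what sign assumptions, saves the argument when $v'(t_0)\neq0$ and $Lv$ can be negative. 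The paper's route is materially different: it works in the first-order variables $(\alpha,q_0)=(q/t,q')$, forms the weighted combination $\alpha+kq_0$, uses the algebraic identity $1-k\ca=k(k\cb-1)$ to close a differential inequality in that combination, and then integrates a second time via $\partial_t(t^2\alpha)$. Your proposal as written does not recover that mechanism, so \eqref{e:fbds} is not established.

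Two further small points: in your sketch of $t_m\ge t_\star$ you should make explicit that \eqref{e:foupbd} (already proved on $(t_0,t_1)$) rules out the possibility that $f$ stays bounded while $f_0$ blows up; and the strictness in \eqref{e:fbds2} should be sourced from $q_0>0$ on all of $[t_0,t_1)$, not just at $t_0$ (you have this, but it is worth saying plainly since the inequality is asserted to be strict for every $t\in(t_0,t_1)$).
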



\subsection{Reference solutions and proof of Theorem \ref{t:mainthm0}}
Before proving the main theorem, Theorem \ref{t:mainthm0}, we first establish Lemma \ref{t:ttcomp}, which concerns the critical times  $t^\star$ and $t_\star$ as defined in Definition \ref{t:tdef}.
\begin{lemma}\label{t:ttcomp}
	Suppose $t_\star$ and $t^\star$ are defined as in  Definition \ref{t:tdef}. Then
	\begin{enumerate}
		\item $t_\star:=\min \mathcal{R} \in[0,\infty)$ exists and $t_\star>t_0$;
		\item Furthermore, if the initial data satisfies \eqref{e:prdata}, i.e.,
		$\mf_0 >  \ba(1+\mf) /(\bc t_0 )$, then $t^\star$ exists and satisfies   $t_0<t_\star<t^\star$.
	\end{enumerate}
\end{lemma}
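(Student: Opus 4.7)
The plan is to reduce part (1) to an intermediate-value analysis of the auxiliary function $\psi(t) := \mathtt{A} t^{(\ba-\triangle)/2} + \mathtt{B} t^{(\ba+\triangle)/2} + 1$, and to deduce part (2) by combining the data hypothesis with the strict positivity $\mathcal{L}(t_\star)>0$ already furnished by Proposition \ref{t:bksl}.

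For part (1), I first compute $\psi(t_0)$ directly from the explicit formulas for $\mathtt{A}$ and $\mathtt{B}$: the two $t_0\mf_0/(1+\mf)^2$ contributions cancel, while the $\mf/(1+\mf)$ contributions combine through the factors $(\ba\pm\triangle)/2$ to give $\psi(t_0)=1-\mf/(1+\mf)=1/(1+\mf)>0$. The sign conventions in \eqref{e:deftr} read $\ba<0<\triangle$ with $\triangle>-\ba$, so $(\ba+\triangle)/2>0$ and $(\ba-\triangle)/2<0$; together with $\mathtt{B}<0$ these force $\psi(t)\to-\infty$ as $t\to\infty$. The intermediate-value theorem then produces a zero of $\psi$ in $(t_0,\infty)$, so $\mathcal{R}$ is non-empty. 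Since $\mathcal{R}$ is closed in $(t_0,\infty)$ and bounded below, and since $\psi$ is continuous with $\psi(t_0)>0$ (keeping $\psi$ positive on a neighborhood of $t_0$), the infimum $\min \mathcal{R}$ is attained in $(t_0,\infty)$ and is strictly larger than $t_0$, i.e.\ $t_\star>t_0$.

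For part (2), the first step is to check that the hypothesis $\mf_0>\ba(1+\mf)/(\bc t_0)$ is exactly the condition $t_0^\ba>\mathtt{E}^{-1}$ demanded in Definition \ref{t:tdef}, so that $t^\star$ is well defined. Using $\ba,\bc<0$, multiplying the hypothesis by $\bc t_0<0$ reverses the inequality, and the elementary identity $t_0^{1-\ba}/t_0^{-\ba}=t_0$ lets one recognise the resulting expression as $\mathtt{E}>t_0^{-\ba}$. With $t^\star$ defined, the function $\mathcal{L}(t)=(1+\mf)^{\bc}\bigl(1-\mathtt{E}(t_0^\ba-t^\ba)\bigr)$ is strictly decreasing on $(t_0,\infty)$ because $\ba<0$ makes $t\mapsto t^\ba$ strictly decreasing and $\mathtt{E}>0$, and by construction $\mathcal{L}(t^\star)=0$. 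The last line of Proposition \ref{t:bksl} supplies $\mathcal{L}(t_\star)>0$, so strict monotonicity of $\mathcal{L}$ forces $t_\star<t^\star$, which combined with part (1) yields $t_0<t_\star<t^\star$.

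The only delicate points are the algebraic cancellation that reduces $\psi(t_0)$ to $1/(1+\mf)$ and the careful sign bookkeeping that identifies the data hypothesis with $\mathtt{E}>t_0^{-\ba}$; both are elementary once one tracks the signs of $\ba$ and $\bc$. The remainder of the argument is pure continuity and monotonicity, leveraged against the already-proven lower bound $\mathcal{L}(t_\star)>0$, so I do not anticipate any genuine obstacle.
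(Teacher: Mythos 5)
Your proof is correct and follows essentially the same approach as the paper: intermediate value theorem plus continuity of $\mathcal{F}$ to exhibit and locate the minimum $t_\star$, then the equivalence of \eqref{e:prdata} with $\mathtt{E}^{-1}<t_0^{\ba}$ combined with strict monotonicity of $\mathcal{L}$ and the bound $\mathcal{L}(t_\star)>0$ from Proposition \ref{t:bksl} to get $t_\star<t^\star$. The only difference is presentational: you replace the paper's two sequential contradiction arguments (for $t_{\mathrm{inf}}>t_0$ and $t_{\mathrm{inf}}\in\mathcal{R}$) with a single topological observation that $\mathcal{R}$ is closed in $(t_0,\infty)$, and you conclude $t_\star<t^\star$ by direct monotonicity rather than by contradiction, both of which are mild streamlinings of the same argument.
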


\begin{proof}
$(1)$	 We first observe that  $
	\mathcal{F}(t):=\mathtt{A} t^{\frac{\ba-\triangle}{2} } + \mathtt{B} t^{\frac{\ba+\triangle}{2} } + 1$
 is continuous on $[t_0,\infty)$. A straightforward calculation gives  $\mathcal{F}(t_0)
	= 1/(1+\mf)>0 $ and $\lim_{t\rightarrow \infty}\mathcal{F}(t)=-\infty$ due to $(\ba-\triangle)/2<0$ and $(\ba+\triangle)/2>0$. Therefore, there must exist at least one time  $t_r\in(t_0,\infty)$ such that $\mathcal{F}(t_r)=\mathtt{A} t_r^{\frac{\ba-\triangle}{2} } + \mathtt{B} t_r^{\frac{\ba+\triangle}{2} } + 1 =0$, which implies that the set  $\mathcal{R}\neq \emptyset$ (recall that $\mathcal{R}$ is defined in Definition \ref{t:tdef}). Since $t_r>t_0$ for all $t_r\in \mathcal{R}$,  then there is an infimum $t_{\text{inf}}$ of $\mathcal{R}$.

	We first claim $t_{\text{inf}}> t_0$, otherwise, if $t_{\text{inf}}= t_0$, then $\mathcal{F}(t_{\text{inf}})=\mathcal{F}(t_0)>0$. Due to the definition of $t_{\text{inf}}$, for any $n \in \mathbb{N}_{+}$ there is $t_{r,n}\in \mathcal{R}$, such that $t_{\text{inf}}<t_{r,n}<t_{\text{inf}}+1/n$. We then have a sequence $\{t_{r,n}\}$ and $\lim_{n\rightarrow \infty}t_{r,n}=t_{\text{inf}}$. According to the continuity of $\mathcal{F}$, we obtain
	\begin{align}\label{e:linmcf}
		0=\lim_{n\rightarrow \infty}\mathcal{F}(t_{r,n})=\mathcal{F}(t_{\text{inf}})>0 , 
	\end{align}
	which is a contradiction. Hence,  $t_{\text{inf}}> t_0$.

Next, we claim that $t_{\text{inf}}\in \mathcal{R}$. Suppose, for contradiction, that $t_{\text{inf}}\notin \mathcal{R}$,
	then $\mathcal{F}(t_{\text{inf}})=\mathtt{A} t_{\text{inf}}^{\frac{\ba-\triangle}{2} } + \mathtt{B} t_{\text{inf}}^{\frac{\ba+\triangle}{2} } + 1 > 0$ (if $\mathcal{F}(t_{\text{inf}})=\mathtt{A} t_{\text{inf}}^{\frac{\ba-\triangle}{2} } + \mathtt{B} t_{\text{inf}}^{\frac{\ba+\triangle}{2} } + 1 < 0$, there is at least another zero of $\mathcal{F}$ in the interval  $(t_0,t_{\text{inf}})$, which contradicts the meaning of $t_{\text{inf}}$). By the continuity of $\mathcal{F}$, we again arrive at \eqref{e:linmcf}, leading to a contradiction. Thus,  $t_{\text{inf}}\in \mathcal{R}$, meaning that  $t_{\text{inf}}=\min \mathcal{R}=t_\star$.  Consequently,  $\mathcal{F}(t_\star)=0$, $t_\star$ exists and satisfies  $t_\star>t_0$.

$(2)$ 	Firstly, since the condition \eqref{e:prdata} (i.e.,
	$\mf_0 >  \ba(1+\mf) /(\bc t_0 )$) is equivalent, by the definition of $\mathtt{E}$ in \S\ref{s:mthm}, to $\mathtt{E}^{-1}<t_0^{\ba}$, then by Definition \ref{t:tdef}, we can define $t^\star :=    (t_0^{\ba}- \mathtt{E}^{-1} )^{1/\ba}>0$. By  noting  $\ba=1-\ca<0$ and $\mathtt{E}>0$, we obtain $t^\star>t_0$. We note $\mathcal{L}(t) :=	(1+\mf)^{\bc}  \bigl(1-\mathtt{E}  t_0^{\ba} +  \mathtt{E}   t^{\ba} \bigr) $ is a decreasing function for $t>t_0$ (due to $\ba <0$),  $t^\star$ is a zero of $\mathcal{L}(t)$, i.e., $\mathcal{L}(t^\star)=0$ and note $\mathcal{L}(t_0)=(1+\mf)^{\bc}>0$.

To complete the proof, we show  $t^\star>t_\star$ by contradiction. Assume instead that $t_\star \geq t^\star$. Since $\mathcal{L}$ is decreasing function, we obtain $\mathcal{L}(t_\star)
	\leq \mathcal{L}(t^\star)=0$. However, by Proposition \ref{t:bksl}, we have a corollary that  $\mathcal{L}(t_\star)>0$, which leads to a contradiction. Therefore,  $t^\star>t_\star$, completing the proof.
\end{proof}

We are now in the position to prove Theorem \ref{t:mainthm0} for the solution $f(t)$ to \eqref{e:feq0}--\eqref{e:feq1}.
\begin{proof}[Proof of Theorem \ref{t:mainthm0}]
By Lemma \ref{t:ttcomp}, statements  $(1)$ and $(4)$ follow immediately

$(2)$ To prove $(2)$, we first establish the local existence of the solution $f$. We consider the first-order system \eqref{e:upbd1}--\eqref{e:upbd2} for $(y,q_0)$ and use Theorem \ref{t:exunqthm} (Existences and Uniqueness of ODEs), there is a small time interval $[t_0,t_1)$, such that, for $t\in [t_0,t_1)$ ($t_0<t_1<t_\star$), there is a unique $C^1$ solution $(y,q_0)$. Thus,  we have a local solution $f$ in $[t_0,t_1)$. By Proposition \ref{t:bksl}, the maximal existence interval $[t_0,t_m)$ for $f$ can be extended to a time $t_m \geq t_\star$. Hence, $f\in C^2([t_0,t_m))$ is a solution of  \eqref{e:feq0}--\eqref{e:feq1}.

If $t_m=+\infty$, by taking the limits on the both sides of \eqref{e:fbds}, we readily obtain $\lim_{t\rightarrow t_m} f(t)=\infty$. 
Next, we prove that $\lim_{t\rightarrow t_m} f(t)=\infty$ for finite  $t_m<\infty$ by contradiction. Suppose $\lim_{t\rightarrow t_m} f(t)<+\infty$. Then by \eqref{e:foupbd} in  Proposition \ref{t:bksl},  we have  $\lim_{t\rightarrow t_m} f_0(t)<+\infty$, and further
\begin{equation*}
	\lim_{t\rightarrow t_m} (y,q_0)=\lim_{t\rightarrow t_m} \Bigl(1+f,\frac{f_0}{1+f}\Bigr)<+\infty .
\end{equation*}
By the continuation principle (see Theorem \ref{t:contthm1}), we can continue the solution beyond $t_m$, which contradicts the definition of $t_m$. Therefore, $\lim_{t\rightarrow t_m} f(t)=+\infty$.

On the other hand, since, by Lemma \ref{t:f0fg}.$(4)$, $g$ is a decreasing function,  which leads to $g^{-\frac{\cb}{A}}(t) \geq 1$. Using \eqref{e:f0frl} and  \eqref{e:fbds}, we  obtain
\begin{align}
	&f_0(t)=B^{-1} t^{-\ca} g^{-\frac{\cb}{A}}(t)(1+f(t))^{\cc} \geq B^{-1} t^{-\ca} (1+f(t))^{\cc} \notag  \\
	&\hspace{2cm} \geq B^{-1} t^{-\ca} \exp [(\cc-1)\bigl( \mathtt{C} t^{\frac{\ba+\triangle}{2} }  +\mathtt{D}  t^{-1}\bigr)]  (1+f(t)), \label{e:f0est2}
\end{align}
and by taking the limit on both sides, we claim
\begin{equation}\label{e:lim2}
	\lim_{t\rightarrow t_m} f_0(t) =+\infty.
\end{equation}
We point out if $t_m<\infty$, we can use $f_0(t) \geq B^{-1} t^{-\ca} (1+f(t))^{\cc}$ directly obtain \eqref{e:lim2} and if $t_m=\infty$, we use \eqref{e:f0est2} and note the fact $\lim_{x\rightarrow \infty}(x^a/e^x)=0$ and $\lim_{t\rightarrow \infty}\exp((\cc-1)Dt^{-1})=1$ to conclude \eqref{e:lim2}. Thus, we have proved $(2)$.

$(3)$ Because $f\in C^2([t_0,t_m))$ is a solution of  \eqref{e:feq0}--\eqref{e:feq1}, by \eqref{e:fbds} in Proposition \ref{t:bksl}, $1+f(t)>\exp \bigl( \mathtt{C} t^{\frac{\ba+\triangle}{2} }  +\mathtt{D}  t^{-1}\bigr)$ for $t\in(t_0,t_m)$. By \eqref{e:fbds1} and \eqref{e:fbds2}, we recall these relations  here.  With the help of Lemma \ref{t:f0fg}.$(2)$ ($f(t)>0$ in $[t_0,t_m)$) and the definition of $t_\star$ in Definition \ref{t:tdef} (since  $\mathcal{F}(t)>0$ for $t\in[t_0,t_\star)$, otherwise, $t_\star$ is not the minimum),  we have
\begin{gather*}
	0<(1+f(t))^{\bc}<  (1+\mf)^{\bc}  \bigl(1-\mathtt{E}  t_0^{\ba} +  \mathtt{E}   t^{\ba} \bigr) =\mathcal{L}(t)  \quad \text{for}\quad t\in(t_0,t_m)  ,  \\
0<\mathcal{F}(t)=\mathtt{A} t^{\frac{\ba-\triangle}{2} } + \mathtt{B} t^{\frac{\ba+\triangle}{2} } + 1 <(1+f(t))^{-1}   \quad \text{for}\quad t\in(t_0,t_\star)  .
\end{gather*}
Next, noting that $\bc<0$, we get
\begin{gather*}
	1+f(t) > (1+\mf)  \bigl(1-\mathtt{E}  t_0^{\ba} +  \mathtt{E}   t^{\ba} \bigr)^{1/\bc}   \quad \text{for}\quad t\in(t_0,t_m) , \\
	\bigl(\mathtt{A} t^{\frac{\ba-\triangle}{2} } + \mathtt{B} t^{\frac{\ba+\triangle}{2} } + 1)^{-1} > 1+f(t) \quad \text{for}\quad t\in(t_0,t_\star)  .
\end{gather*}
This completes the proof of $(3)$.

$(5)$ Now,  let us turn to $(5)$.
By \eqref{e:fbds1} in Proposition \ref{t:bksl},  we have $	(1+f(t))^{\bc}<  (1+\mf)^{\bc}  \bigl(1-\mathtt{E}  t_0^{\ba} +  \mathtt{E}   t^{\ba} \bigr)=:\mathcal{L}(t)$ for $t\in(t_0,t_m)$. If the initial data satisfies \eqref{e:prdata}, which is equivalent to $1-\mathtt{E} t_0^{\ba}<0$, we can define $t^\star$ according to Definition \ref{t:tdef}. Now, we prove that $t_m<t^\star$. Assume, for the sake of contradiction, that $t_m \geq t^\star$. Since, by Lemma \ref{t:f0fg}.$(2)$ ($f(t)>0$ in $[t_0,t_m)$), we have  $f(t^\star)>0$ for $t^\star \in(t_0,t_m)$. This implies $	0<(1+f(t^\star))^{\bc}<  (1+\mf)^{\bc}  \bigl(1-\mathtt{E}  t_0^{\ba} +  \mathtt{E}  ( t^\star)^{\ba} \bigr)=:\mathcal{L}(t^\star)=0$, which leads to a contradiction. Therefore, $t_m<t^\star$.

$(6)$ In the end, we prove $(6)$. For $t\in(t_0,t_m)$ where $t_m<t^\star$, we have $\mathcal{L}(t)=	(1+\mf)^{\bc}  \bigl(1-\mathtt{E}  t_0^{\ba} +  \mathtt{E}   t^{\ba} \bigr)>0$, since $\mathcal{L}(t)$ is decreasing and $\mathcal{L}(t^\star)=0$. Therefore, 
\begin{align*}
	1+f(t) = y(t)
	> &
	(1+\mf)  \bigl(1-\mathtt{E}  t_0^{\ba} +  \mathtt{E}   t^{\ba} \bigr)^{1/\bc} .
\end{align*}
This completes the proof.
\end{proof}


\subsection{Proof of Proposition \ref{t:bksl}}\label{s:pfPro}
In this section, we will prove Proposition \ref{t:bksl}.
\begin{proof}
	Firstly by Lemma \ref{t:f0fg}.$(1)$ and $(2)$, we note that $f(t)>0$ and $f_0(t)>0$ for any $t\in(t_0,t_1)$.  Then using \eqref{e:q0} and \eqref{e:upbd1}, we obtain
	\begin{equation*}
	y(t)=1+f(t)>1, \quad	y_0(t)>0, \quad q(t)>0 \AND q_0(t)=y_0(t)/y(t)>0
	\end{equation*}
for $t\in(t_0,t_1)$. Next, we will prove these inequalities one by one.

	\underline{$(1)$ Proof of \eqref{e:fbds}:}
	Since $f(t)$ solves  equation \eqref{e:feq0}, by \eqref{e:ydef}--\eqref{e:q2} and introducing a new variable
	\begin{equation}\label{e:q01}
		\alpha(t):=q(t)/t>0,
	\end{equation}
in terms of $(\alpha, q_0)$,  equations \eqref{e:q1}--\eqref{e:q2} become
	\begin{align}
		\del{t}\alpha(t)=&\frac{1}{t}(q_0(t)-\alpha(t) ),   \label{e:lbeq1} \\
		\del{t} q_0(t)=&-\frac{\ca}{t} q_0(t)+\frac{\cb}{t^2}(e^{\alpha(t)t}-1)-(1-\cc)q_0^2.  \label{e:lbeq2}
	\end{align}
	By \eqref{e:lbeq1} and \eqref{e:lbeq2}, we obtain
	\begin{align}\label{e:lbeq3}
		\del{t}(\alpha + k q_0) =\frac{1}{t}(q_0(t)-\alpha(t) )- \frac{k \ca}{t} q_0(t)+\frac{k \cb}{t^2}(e^{\alpha(t)t}-1)+k (\cc-1) q_0^2
	\end{align}
	where we take
	\begin{equation*}
		k=\frac{\sqrt{(\ca-1)^2+4 \cb}-\ca+1}{2 \cb} =\frac{\ba+\triangle}{2 \cb }>0 .
	\end{equation*}
	Since $e^{\alpha(t)t}-1 > \alpha(t) t$ and $\cc>1$,  and using the fact that $1-k\ca=k(k\cb-1)$,  we derive the following lower bounds for \eqref{e:lbeq3} and \eqref{e:lbeq2}
	\begin{align}
		\del{t}(\alpha + k q_0)
		> & \frac{1}{t}(q_0(t)-\alpha(t) )- \frac{k\ca}{t} q_0(t)+\frac{k\cb}{t}\alpha(t)
		=  \frac{q_0}{t}(1-k\ca )+ \frac{\alpha}{t}(k\cb-1)
		\notag  \\
		= &  \frac{q_0}{t} k (k \cb-1 )+ \frac{\alpha}{t}(k\cb-1) = \frac{k\cb-1}{t}(\alpha+kq_0) , \label{e:lbeq5}\\
		\del{0} q_0(t) >  &  -\frac{\ca}{t} q_0(t)+\frac{\cb}{t} \alpha(t). \label{e:dtq0}
	\end{align}
Thus, \eqref{e:lbeq5} and \eqref{e:dtq0} imply
	\begin{align}
		\del{t}[t^{1-k\cb}(\alpha+kq_0)] >  0 \AND
		\del{0} (t^{\ca} q_0) >  \cb \alpha(t) t^{\ca-1}.  \label{e:dtq1}
	\end{align}
	Integrating the first differential inequality in \eqref{e:dtq1} yields, for $t\in(t_0,t_1)$,
	\begin{align}\label{e:aqineq1}
		\alpha+q_0 > (\alpha(t_0)+ k q_0(t_0))t_0^{1-k\cb} t^{k\cb-1}/\bar{k} \quad \text{where } \bar{k}:=\max\{k,1\} \geq 1.
	\end{align}
	From the definitions of $q_0$ and $\alpha$ in \eqref{e:q0} and \eqref{e:q01}, we observe that \eqref{e:aqineq1} implies 
	\begin{align*}
		\alpha+\del{t} q =\alpha+\del{t} (\alpha t)  > (\alpha(t_0)+k q_0(t_0))t_0^{1-k\cb} t^{k\cb-1}/\bar{k},
	\end{align*}
	which  in turn  implies
	\begin{align}\label{e:aqineq5}
		\del{t}(t^2\alpha) > (\alpha(t_0)+ k q_0(t_0))t_0^{1-k\cb} t^{k\cb}/\bar{k}.
	\end{align}
Integrating \eqref{e:aqineq5} and using  $\alpha(t_0) =\ln( 1+\mf)/t_0$ and $q_0(t_0) = \mf_0/(1+\mf)$, we get, for $t\in(t_0,t_1)$,
	\begin{equation*}
		\alpha(t) >  \frac{1}{(k\cb+1)\bar{k}} (\alpha(t_0)+kq_0(t_0))t_0^{1-k\cb} t^{k\cb-1}  + \frac{k\cb}{k\cb+1} \Bigl[  \alpha(t_0)   - \frac{1}{\cb}q_0(t_0) \Bigr] t_0^{2} t^{-2}=\mathtt{C} t^{\frac{\ba+\triangle}{2}-1} + \mathtt{D} t^{-2} .
	\end{equation*}
By \eqref{e:q0} and \eqref{e:q01}, we have $1+f(t)= y(t)=\exp(\alpha(t) t)$, and this, for $t\in(t_0,t_1)$, leads to
	\begin{equation*}
		1+f(t) = y(t) > \exp (\mathtt{C} t^{\frac{\ba+\triangle}{2}} + \mathtt{D} t^{-1}).
	\end{equation*}

	\underline{$(2)$ Proof of \eqref{e:fbds2}:}
	Multiplying both sides of \eqref{e:q2} by  $1/y$ yields
	\begin{equation*}
		\del{t} \Bigl(\frac{q_0}{y}\Bigr)  =  -\frac{\ca}{t} \frac{q_0}{y} +\frac{\cb}{t^2} \Bigl(1-\frac{1}{y }\Bigr) + (\cc-2) \frac{q_0^2}{y }.
	\end{equation*}
	Then, recalling that $\cc\in(1,3/2)$, and since $y>1$ and $q_0>0$, we obtain
	\begin{equation*}
		\del{t} \Bigl(\frac{q_0}{y}\Bigr) +\frac{\ca}{t} \frac{q_0}{y} < \frac{\cb}{t^2} \Bigl(1-\frac{1}{y }\Bigr) ,
	\end{equation*}
	which in turn implies
	\begin{equation}\label{e:dtq0y}
		\del{t} \Bigl(t^\ca\frac{q_0}{y}\Bigr)  <  \cb t^{\ca-2} \Bigl(1-\frac{1}{y }\Bigr)  .
	\end{equation}
	On the other hand, by \eqref{e:q0}, we have $\del{t}y=y q_0$, so direct calculations lead to
	\begin{align}\label{e:q0y}
		\del{t} \Bigl(\frac{1}{y}\Bigr) =- y^{-2} \del{t} y =-\frac{q_0}{y} .
	\end{align}

	Let
	\begin{equation}\label{e:defQY}
		\mathcal{Q} :=\frac{q_0}{y}  \AND  \mathcal{Y}:=\frac{1}{y} .
	\end{equation}
	Then, \eqref{e:dtq0y} and \eqref{e:q0y} become
	\begin{align}\label{e:YQeq}
		\del{t} (t^\ca\mathcal{Q}) < \cb t^{\ca-2}(1-\mathcal{Y}) \AND \del{t}\mathcal{Y} =-\mathcal{Q}.
	\end{align}
Using \eqref{e:YQeq}, and with direct calculations, we obtain
	\begin{equation}\label{e:dtqy2}
		\del{t} (t^\ca\mathcal{Q}+\ell t^{\ca-1}\mathcal{Y}) <
		\cb t^{\ca-2} -\ell t^{\ca-1}\mathcal{Q} + \bigl[\ell  (\ca-1)-\cb\bigr] t^{\ca-2} \mathcal{Y}
	\end{equation}
	where we take
	\begin{equation*}
		\ell:  =\frac{1-\ca-\sqrt{(1-\ca)^2+4\cb}}{2} =\frac{\ba-\triangle}{2  }<0 .
	\end{equation*}
	In this case, we have
	\begin{equation}\label{e:lbdeq}
		\ell  (\ca-1)-\cb =-\ell ^2.
	\end{equation}
Then, using \eqref{e:lbdeq}, \eqref{e:dtqy2} becomes
	\begin{equation*}
		\del{t} (t^\ca\mathcal{Q}+\ell t^{\ca-1}\mathcal{Y} ) +\ell t^{-1}(t^\ca \mathcal{Q} +\ell  t^{\ca-1} \mathcal{Y})
		< \cb t^{\ca-2}
	\end{equation*}
	which in turn leads to
	\begin{equation*}
		\del{t}  (t^{\ell  +\ca}\mathcal{Q}+\ell t^{\ell  +\ca-1}\mathcal{Y}  )  <   \cb t^{\ell  +\ca-2} .
	\end{equation*}
	Then integrating it yields, for $t\in(t_0,t_1)$,
	\begin{equation*}
		\mathcal{Q}+\ell t^{-1}\mathcal{Y}    <
		\Bigl(C_2 -\frac{\cb t_0^{\ell  +\ca-1}}{\ell  +\ca-1} \Bigr)t^{-\ell  -\ca}+  \frac{\cb}{\ell  +\ca-1} t^{ -1} \quad  \text{where} \quad C_2
		=\frac{t_0^{\ell  +\ca}\mf_0}{(1+\mf)^2}+\frac{\ell t_0^{\ell  +\ca-1}}{1+\mf}.
	\end{equation*}
By \eqref{e:defQY}, and with the help of \eqref{e:q0y}, the above inequality becomes
	\begin{equation}\label{e:QY2}
		-\del{t} \Bigl(\frac{1}{y}\Bigr)+\ell t^{-1}\frac{1}{y}  =\frac{q_0}{y}+\ell t^{-1}\frac{1}{y}  <  \Bigl(C_2 -\frac{\cb t_0^{\ell  +\ca-1}}{\ell  +\ca-1} \Bigr)t^{-\ell  -\ca}+  \frac{\cb}{\ell  +\ca-1} t^{ -1}  .
	\end{equation}
	This inequality implies
	\begin{equation*}
		\del{t} \Bigl(t^{-\ell }\frac{1}{y}\Bigr)  >    \Bigl(\frac{\cb t_0^{\ell  +\ca-1}}{\ell  +\ca-1}-C_2 \Bigr)t^{-2\ell  -\ca}  -   \frac{\cb}{\ell  +\ca-1} t^{ -1-\ell }.
	\end{equation*}
Then, integrating it for  $t\in(t_0,t_1)$,   with the help of \eqref{e:lbdeq} (i.e.,  $\ell(\ell  +\ca-1)=\cb$), we get
	\begin{align*}
		\frac{1}{y} > &  \frac{1}{1+\mf}t_0^{-\ell } t^{\ell }+  \frac{1}{1-2\ell  -\ca} \Bigl(\frac{\cb t_0^{\ell  +\ca-1}}{\ell  +\ca-1}-C_2 \Bigr) (t^{1-\ell  -\ca} -t_0^{1-2\ell  -\ca} t^{\ell }) +  (1  -t_0^{ -\ell } t^{\ell }  )  \notag  \\
		= & \biggl(-\frac{t_0^{-\ell }\mf}{1+\mf}   - \frac{1}{1-2\ell  -\ca} \Bigl( \ell  t_0^{  -\ell  }  -\frac{t_0^{ 1-\ell  }  \mf_0}{(1+\mf)^2}-\frac{\ell t_0^{   -\ell  } } {1+\mf} \Bigr) \biggr) t^{\ell }\notag  \\
		& +  \frac{1}{1-2\ell  -\ca} \Bigl( \ell  t_0^{\ell  +\ca-1} -\frac{t_0^{\ell  +\ca}\mf_0}{(1+\mf)^2}-\frac{\ell t_0^{\ell  +\ca-1}}{1+\mf} \Bigr) t^{1-\ell  -\ca} + 1 \notag  \\
		= & \frac{ t^{\ell }}{1-2\ell  -\ca}\biggl(  \frac{t_0^{ 1-\ell  }  \mf_0}{(1+\mf)^2} -  \frac{(1-\ell -\ca ) \mf t_0^{   -\ell  } } {1+\mf} \biggr)+  \frac{t^{1-\ell  -\ca} }{1-2\ell  -\ca} \Bigl(  \frac{\ell  \mf t_0^{\ell  +\ca-1}}{1+\mf}  -\frac{t_0^{\ell  +\ca}\mf_0}{(1+\mf)^2} \Bigr) + 1 \notag  \\
		= & \mathtt{A} t^{\frac{\ba-\triangle}{2} } + \mathtt{B} t^{\frac{\ba+\triangle}{2} } + 1 .
	\end{align*}
	In terms of $f$, we obtain, for $t\in(t_0,t_1)$,
	\begin{equation}\label{e:calfineq}
		(1+f)^{-1}=y^{-1} >  \mathtt{A} t^{\frac{\ba-\triangle}{2} } + \mathtt{B} t^{\frac{\ba+\triangle}{2} } + 1   .
	\end{equation}

\underline{$(3)$ Proof of \eqref{e:foupbd}:}  	
	Using \eqref{e:QY2}, together with  $f=y-1$ and \eqref{e:deftr} (recalling that $\triangle>-\ba$, so that $\ba-\triangle<2\ba<0$), we obtain,
	\begin{equation}\label{e:q0est}
		q_0   <
		\Bigl(\frac{t_0^{\ell  +\ca}\mf_0}{(1+\mf)^2}-\frac{\ell t_0^{\ell  +\ca-1} \mf}{1+\mf}   \Bigr)t^{-\ell  -\ca} y+  \ell t^{ -1} (y -1)
		= -\mathtt{B}\triangle t^{\frac{\triangle+\ba}{2}-1} y+  \frac{\ba-\triangle}{2}  t^{ -1} f<-\mathtt{B}\triangle t^{\frac{\triangle+\ba}{2}-1} y  .
	\end{equation}
Applying \eqref{e:upbd1}, we conclude that
	\begin{equation*}
		f_0=\del{t}y=y q_0<-\mathtt{B}\triangle t^{\frac{\triangle+\ba}{2}-1} (1+f)^2+   \frac{\ba-\triangle}{2}  t^{ -1} f(1+f) <-\mathtt{B}\triangle t^{\frac{\triangle+\ba}{2}-1} (1+f)^2.
	\end{equation*}

	\underline{$(4)$ Proof of \eqref{e:fbds1}:} 	
Multiplying both sides of \eqref{e:upbd2} by $1/y^{c-1}$,  we arrive at
	\begin{equation*}
		\del{t} \Bigl(\frac{q_0}{y^{\cc-1}}\Bigr) -(1-\cc) \frac{q_0^2}{y^{c-1}} =  -\frac{\ca}{t} \frac{q_0}{y^{\cc-1}} +\frac{\cb}{t^2} \bigl(y^{2-\cc}-y^{1-\cc}\bigr) -(1-\cc) \frac{q_0^2}{y^{\cc-1}},
	\end{equation*}
 which simplifies to
	\begin{equation*}
		\del{t} \Bigl(\frac{q_0}{y^{\cc-1}}\Bigr) +\frac{\ca}{t} \frac{q_0}{y^{\cc-1}} =   \frac{\cb}{t^2} \bigl(y^{2-\cc}- y^{1-\cc}\bigr) .
	\end{equation*}
This further implies
	\begin{equation}\label{e:upbd3}
		\del{t} \Bigl(t^\ca\frac{q_0}{y^{\cc-1}}\Bigr)   =   \cb t^{\ca-2}  \bigl(y^{2-\cc}- y^{1-\cc}\bigr) .
	\end{equation}
On the other hand, applying \eqref{e:upbd1}, we have
	\begin{equation}\label{e:upbd4}
		\del{t} y^{1-\cc} =(1-\cc) y^{-\cc} \del{t} y =(1-\cc) y^{1-\cc} q_0.
	\end{equation}

Define
	\begin{equation}\label{e:defXY}
		X:=q_0 y^{1-c}  \AND Y:=y^{1-c},
	\end{equation}
Substituting these into \eqref{e:upbd3} and \eqref{e:upbd4}, we obtain
	\begin{align}\label{e:dtX}
		\del{t}  (t^\ca X )  =  \cb t^{\ca-2}   (y - 1  ) Y  \AND \del{t} Y  =(1-\cc) X .
	\end{align}
	By \eqref{e:dtX},  through direct calculations and using  $f=y-1>0$, we obtain
	\begin{align*}
		\del{t}  (t^\ca X+\lambda t^{\ca-1}  Y )   =  &  [\cb (y- 1 )+\lambda(\ca-1)] t^{\ca-2} Y +\lambda (1-\cc) t^{\ca-1} X   \notag \\
		>   &  \lambda(\ca-1)t^{\ca-2} Y +\lambda (1-\cc) t^{\ca-1} X =  \lambda (1-\cc) t^{-1}  \bigl( t^{\ca} X+\lambda t^{\ca-1}Y  \bigr).
	\end{align*}
	where $\lambda$ is defined as $\lambda:=(\ca-1)/(1-\cc)=-\ba/\bc$.
	Thus, we obtain
	\begin{align*}
		\del{t}  (t^\ca X+\lambda t^{\ca-1}  Y )   -  \lambda (1-\cc) t^{-1}  \bigl( t^{\ca} X+\lambda t^{\ca-1}Y  \bigr)
	    > 0 ,
	\end{align*}
which can be rewritten as
	\begin{align*}
		\del{t} (t^{\ca-\lambda(1-\cc)}   X+\lambda t^{\ca-1-\lambda(1-\cc)}   Y )
	>  0 .
	\end{align*}
Integrating this inequality, for $t\in(t_0,t_1)$,  we obtain
	\begin{equation*}
		t^{\ca-\lambda(1-\cc)}   X+\lambda t^{\ca-1-\lambda(1-\cc)}   Y
	>   C_0 :=\frac{t_0 \mf_0}{(1+\mf)^\cc}  + \frac{\ca-1}{1-\cc} (1+\mf)^{1-\cc} .
	\end{equation*}
This further implies
	\begin{equation*}
		X+\lambda t^{-1 }   Y
	>  C_0  t^{-\ca+\lambda(1-\cc)}.
	\end{equation*}
	By \eqref{e:upbd1}, \eqref{e:defXY},  \eqref{e:q0} and noting that $\cc>1$, the above inequality becomes
	\begin{equation*}
		\del{t}  y^{1-\cc}   +(1-\cc)\lambda t^{-1 }   y^{1-\cc}
	<  C_0 (1-\cc) t^{-\ca+\lambda(1-\cc)}  ,
	\end{equation*}
	which yields
	\begin{equation*}
		\del{t} ( t^{(1-\cc)\lambda} y^{1-\cc} )
		<  C_0 (1-\cc) t^{-\ca+2\lambda(1-\cc)}  .
	\end{equation*}
	Integrating this inequality, noting that $\cc>1$ and using $\lambda =-\ba/\bc$, after lengthy calculations, we arrive at, for $t\in(t_0,t_1)$,
	\begin{align}\label{e:yc}
		(1+f(t))^{\bc}=y^{\bc}(t)<(1+\mf)^{\bc}\biggl(1-\frac{\bc t_0 \mf_0 }{\ba (1+\mf)}     +  \frac{\bc  \mf_0 t_0^{1-\ba}  }{  \ba  (1+\mf) }    t^{\ba} \biggr)	= (1+\mf)^{\bc}  \bigl(1-\mathtt{E}  t_0^{\ba} +  \mathtt{E}   t^{\ba} \bigr) .
	\end{align}
	Note that $(1+\mf)^{\bc}  \bigl(1-\mathtt{E}  t_0^{\ba} +  \mathtt{E}   t^{\ba} \bigr) $ is a  decreasing function,  and $[(1+\mf)^{\bc}  \bigl(1-\mathtt{E}  t_0^{\ba} +  \mathtt{E}   t^{\ba} \bigr) ]|_{t=t_0}=(1+\mf)^{\bc}$.

\underline{$(5)$ Extension of solutions and  $\mathcal{L}(t_\star)>0$:}  Suppose $t_\star$ is defined by Definition \ref{t:tdef} as the smallest root of $\mathcal{F}$ that is greater than $t_0$. Since $f\in C^2$ solves \eqref{e:feq0} for any $t\in [t_0,t_1)$,  it follows that,  by \eqref{e:ydef}--\eqref{e:upbd2}, $(y,q_0)$ solves the system \eqref{e:upbd1}--\eqref{e:upbd2} for any $t\in [t_0,t_1)$. Let us confirm that if $t_m$ denotes the maximal time of existence for  $(y,q_0)$, then $t_m\geq t_\star$. Suppose, on the contrary, that  $t_m< t_\star$. Since the solution $f\in C^2([t_0,t_m))$ exists, it follows from \eqref{e:calfineq} that   $(y(t))^{-1}=(1+f(t))^{-1}>\mathcal{F}(t)>0$ for $t\in (t_0,t_m)$ (recall that $\mathcal{F}(t)$ is defined by \eqref{e:LF} and satisfies $\mathcal{F}(t)>0$ for all $t\in[t_0,t_\star)$).  Thus, we have the bound   $1<y(t)<(\mathcal{F}(t))^{-1}<\infty$ for $t\in (t_0,t_m)$. Along with this bound on $y$, we also obtain, from \eqref{e:upbd1} and \eqref{e:q0est}, that $0<y_0(t)/y(t)=q_0(t)<-\mathtt{B}\triangle t^{\frac{\triangle+\ba}{2}-1} y(t)<\infty$ (recall $\mathtt{B}<0$ in \S\ref{s:mthm}) for $t\in (t_0,t_m)$. 
By the continuation principle \ref{t:contthm2}, the solution $(y,q_0)$ can be extended beyond $t=t_m$, contradicting the assumption that $t_m$ is the maximal time of existence for $(y,q_0)$. Therefore, we conclude that $t_m \geq t_\star$.  
Finally, since $f\in C^2([t_0,t_m))$ for $t_m>t_\star$ solves the ODE \eqref{e:feq0}--\eqref{e:feq1}, it follows from \eqref{e:yc} that  $(1+f(t))^{\bc} < (1+\mf)^{\bc}  \bigl(1-\mathtt{E}  t_0^{\ba} +  \mathtt{E}   t^{\ba} \bigr) =\mathcal{L}(t)$ for $t\in(t_0,t_m)$. Consequently, we obtain $\mathcal{L}(t_\star)>(1+f(t_\star))^{\bc} >0$. This completes the proof.
\end{proof}

\subsection{Estimates of two crucial quantities $\chi(t)$ and $\xi(t)$}
In this section, we  estimate two important quantities $\chi(t)$ and $\xi(t)$, which will be frequently used in  reformulating the main equation \eqref{e:maineq0}--\eqref{e:maineq1} into the Fuchsian formulation \eqref{e:model1}--\eqref{e:model2} in Appendix \ref{s:fuc}.  The first quantity is defined by
\begin{equation}\label{e:Gdef}
	\chi(t):=\frac{t^{2-\ca} f_0(t)}{(1+f(t))^{2-\cc} f(t) g^{\frac{\cb}{A}}(t)}= \frac{  g^{-\frac{2\cb}{A}}(t) t^{2(1-\ca)}}{B f(t) (1+f(t))^{2(1-\cc)}}  .
\end{equation}
We note that the second equality holds by directly applying identity \eqref{e:f0frl} in Lemma \ref{t:f0fg}, and we recall that $B$ is defined there.

Before proceeding, we first present the following useful lemma.
\begin{lemma}\label{t:limgtf}
	Suppose $g(t)$ is defined by \eqref{e:gdef} and that $f\in C^2([t_0,t_m))$ (where $[t_0,t_m)$ is the maximal interval of existence of $f$) solves the ODE system \eqref{e:feq0}--\eqref{e:feq1}. Then, the quantity $g^{\frac{\cb}{A}}(t) t^{\ca-1} (1+f(t) )^{1-\cc} >0$ is bounded on $[t_0,t_m)$, and
	\begin{equation*}
		\lim_{t\rightarrow t_m} \bigl( g^{\frac{\cb}{A}}(t) t^{\ca-1} (1+f(t) )^{1-\cc} \bigr)= 0 .
	\end{equation*}
\end{lemma}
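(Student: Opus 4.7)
The plan is to reduce the claim to a simple asymptotic comparison by first eliminating $g$ from the quantity using the representation of $f_0$ already obtained, and then to split the argument according to whether the maximal existence time $t_m$ is finite or infinite.

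First, I would use Lemma \ref{t:f0fg}.(1), which gives $f_0(t) = B^{-1} t^{-\ca} g^{-\cb/A}(t)(1+f(t))^{\cc}$, to rewrite the quantity in question as
\begin{equation*}
g^{\frac{\cb}{A}}(t)\, t^{\ca-1}(1+f(t))^{1-\cc} = \frac{1+f(t)}{t B f_0(t)}.
\end{equation*}
Since $g(t)\in(0,1]$ on $[t_0,t_m)$ by Lemma \ref{t:f0fg}.(3), we have $g^{-\cb/A}\geq 1$ (as $\cb>0$, $A>0$), so the same identity produces the clean upper bound
\begin{equation*}
0 < g^{\frac{\cb}{A}}(t)\, t^{\ca-1}(1+f(t))^{1-\cc} \leq \frac{t^{\ca-1}}{(1+f(t))^{\cc-1}}\qquad\text{for } t\in[t_0,t_m).
\end{equation*}
The positivity on the left uses $f>0$ and $g>0$ from Lemma \ref{t:f0fg}, while the denominator $(1+f)^{\cc-1}$ is positive since $\cc>1$ and $f>0$. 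Consequently it suffices to prove that $t^{\ca-1}(1+f(t))^{1-\cc} \to 0$ as $t\to t_m$.

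Next I would split into the two cases permitted by Theorem \ref{t:mainthm0}. If $t_m<\infty$, then $t^{\ca-1}$ stays bounded on $[t_0,t_m)$, while $\lim_{t\to t_m} f(t) = +\infty$ by Theorem \ref{t:mainthm0}.(2); since $\cc-1>0$, this forces $(1+f(t))^{\cc-1}\to \infty$ and the ratio tends to zero. If instead $t_m = +\infty$, I would invoke the lower bound from Theorem \ref{t:mainthm0}.(3), namely $1+f(t) > \exp(\mathtt{C}\, t^{(\ba+\triangle)/2} + \mathtt{D}\, t^{-1})$, giving
\begin{equation*}
\frac{t^{\ca-1}}{(1+f(t))^{\cc-1}} < t^{\ca-1}\exp\bigl(-(\cc-1)\mathtt{C}\, t^{\frac{\ba+\triangle}{2}} - (\cc-1)\mathtt{D}\, t^{-1}\bigr).
\end{equation*}
Because $\triangle>-\ba$ (by \eqref{e:deftr}) we have $(\ba+\triangle)/2>0$, and $\mathtt{C}>0$, $\cc-1>0$, so the exponential decay overwhelms the polynomial $t^{\ca-1}$ as $t\to\infty$, again forcing the limit to be zero.

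Finally, boundedness on $[t_0,t_m)$ follows by a soft argument: the function $t\mapsto g^{\cb/A}(t)\, t^{\ca-1}(1+f(t))^{1-\cc}$ is continuous on $[t_0,t_m)$ (since $f\in C^2$ and $g\in C^1$ by \eqref{e:gdef}, and all factors stay positive on this interval), it takes the finite positive value $(1+\mf)^{1-\cc} t_0^{\ca-1}$ at $t=t_0$, and by the previous step it has limit $0$ at $t_m$; hence it is bounded on the entire half-open interval. The only subtlety worth double-checking, and the main place where one has to be careful, is the infinite-time case, where one must verify that the sign of $(\ba+\triangle)/2$ is strictly positive (so the exponential truly dominates); this is precisely what \eqref{e:deftr} guarantees.
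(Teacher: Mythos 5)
Your proof is correct and follows essentially the same route as the paper: both reduce the quantity to $t^{\ca-1}(1+f)^{1-\cc}$ via the bound $g^{\cb/A}\leq 1$, split into $t_m<\infty$ (using $f\to\infty$ and $\cc>1$) and $t_m=\infty$ (using the exponential lower bound from Proposition \ref{t:bksl} with $(\ba+\triangle)/2>0$ and $\mathtt{C}>0$), and then deduce boundedness from continuity plus the vanishing limit. The initial rewriting via $f_0$ is a harmless detour; the paper simply applies $g\leq 1$ directly and cites Proposition \ref{t:cont2} for the boundedness step, which your ``soft argument'' reproduces verbatim.
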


\begin{proof}
\underline{$(1)$ If $t_m<\infty$,} since  $g(t) \in(0,1]$ for $t\in[t_0,t_m)$ by Lemma \ref{t:f0fg}.$(3)$, we have
	\begin{equation*}
		0 \leq g^{\frac{\cb}{A}} t^{\ca-1} (1+f )^{1-\cc} \leq t_m^{\ca-1} (1+f )^{1-\cc}.
	\end{equation*}
By Theorem \ref{t:mainthm0}(2) and the fact that $\cc>1$,  it follows that $ \lim_{t\rightarrow t_m}    (1+f )^{1-\cc}=0$, which proves the lemma in this case.
	
	\underline{$(2)$ If $t_m=\infty$,} Using Proposition \ref{t:bksl} and the fact that  $g(t) \in(0,1]$ for $t\in[t_0,t_m)$, we obtain the estimate
	\begin{equation*}
		0\leq g^{\frac{\cb}{A}} t^{\ca-1} (1+f )^{1-\cc} \leq t^{\ca-1} \exp \bigl( (1-\cc)(\mathtt{C} t^{\frac{\ba+\triangle}{2} }  +\mathtt{D}  t^{-1})\bigr) .
	\end{equation*}
	Since, using the fact $\lim_{x\rightarrow \infty}(x^a/e^x)=0$ and noting $-\ba>0$, we obtain
	\begin{equation*}
		\lim_{t\rightarrow \infty}  \bigl[  t^{\ca-1} \exp \bigl( (1-\cc)(\mathtt{C} t^{\frac{\ba+\triangle}{2} }  +\mathtt{D}  t^{-1})\bigr) \bigr]=	\lim_{t\rightarrow \infty}  \bigl[  t^{-\ba} \exp \bigl( (1-\cc) \mathtt{C} t^{\frac{\ba+\triangle}{2} } \bigr) \bigr]=0.
	\end{equation*}

Since $g^{\frac{\cb}{A}} t^{\ca-1} (1+f )^{1-\cc} $  is continuous on $[t_0,t_m)$, by Proposition \ref{t:cont2} (see Appendix \ref{s:cont}), we conclude that it is bounded, which completes the proof of this lemma.
\end{proof}

\begin{lemma}\label{t:limtf}
	Suppose $f\in C^2([t_0,t_m))$ (where $[t_0,t_m)$ is the maximal interval of existence of $f$ given by Theorem \ref{t:mainthm0}) and that  $f$ solves the ODE system \eqref{e:feq0}--\eqref{e:feq1}. Assume that $t_m<\infty$ is finite. Then $
	\lim_{t\rightarrow t_m}[(t_m-t)f^{2-\cc}(t)]=+\infty$.
\end{lemma}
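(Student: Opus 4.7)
I plan to study $\Phi(t) := (t_m - t)(1+f(t))^{2-\cc}$ and show $\Phi(t) \to +\infty$ as $t \to t_m$; since $f \to \infty$ by Theorem \ref{t:mainthm0}, this is equivalent to the stated claim. A direct differentiation yields
\[\del{t}\Phi(t) = (1+f(t))^{2-\cc}\bigl[(2-\cc)(t_m-t)q_0(t) - 1\bigr],\]
where $q_0 := f_0/(1+f)$. The strategy decomposes into two ingredients: (A) show that $(2-\cc)(t_m-t)q_0(t) - 1 \ge c > 0$ for $t$ near $t_m$, and (B) show $\int_{t_0}^{t_m}(1+f(s))^{2-\cc}\,ds = +\infty$. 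Together these yield $\del{t}\Phi(t) \ge c(1+f(t))^{2-\cc}$ eventually, and integration forces $\Phi(t) \to +\infty$.

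For (B), I argue by contradiction. From \eqref{e:dtgf} and the fact that $f(t) \ge (1+f(t))/2$ near blowup, $\del{t}g^{-\cb/A}(t) \ge c_1(1+f(t))^{2-\cc}$, so finiteness of the integral would force $g^{-\cb/A}$ to stay bounded by some $M<\infty$ on $[t_0,t_m)$. Integrating \eqref{e:f0frl} and using $(1+f(t))^{1-\cc}|_{t=t_m}=0$ from Theorem \ref{t:mainthm0} yields the identity
\[(1+f(t))^{1-\cc} = (\cc-1)B^{-1}\int_t^{t_m}s^{-\ca}g^{-\cb/A}(s)\,ds \le C(t_m-t),\]
which gives $(1+f(t))^{2-\cc} \ge C'(t_m-t)^{-\beta}$ with $\beta := (2-\cc)/(\cc-1) > 1$ (using $\cc < 3/2$). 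Since $\int(t_m-s)^{-\beta}\,ds$ diverges at $t_m$, this contradicts integrability.

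For (A), I would use the ODE \eqref{e:q2}, $\del{t}q_0 = (\cc-1)q_0^2 + \cb(y-1)/t^2 - \ca q_0/t$. The dominant balance near blowup is between the Riccati term $(\cc-1)q_0^2$ and the driving term $\cb y/t^2$; substituting the ansatz $y \sim A(t_m-t)^{-2}$ and $q_0 \sim 2/(t_m-t)$ yields the self-consistent choice $A = 2t_m^2(3-2\cc)/\cb$, so heuristically $(t_m-t)q_0(t) \to 2$, which exceeds $1/(2-\cc)$ whenever $\cc < 3/2$. To make this rigorous I would analyze $\eta(t) := 1/q_0(t)$, satisfying $\eta'(t) = -(\cc-1) + \ca\eta/t - \cb(y-1)\eta^2/t^2$, and show $\eta'(t) \to -1/2$ using the Riccati-type a priori upper bound $q_0(t)\le 1/[(\cc-1)(t_m-t)]$ (which follows from $q_0' \ge (\cc-1)q_0^2 - \ca q_0/t$) together with control on $(1+f)(t_m-t)^2$ coming from \eqref{e:foupbd}. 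Step (A) is the main obstacle, requiring careful asymptotic analysis of the ODE near the blowup time. Once both (A) and (B) are in hand, $\Phi(t) - \Phi(t_1) \ge c\int_{t_1}^{t}(1+f(s))^{2-\cc}\,ds \to +\infty$ as $t \to t_m$, completing the proof.
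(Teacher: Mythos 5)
Your step (B) is correct and is in fact a nice stand-alone observation: the contradiction argument using $\del{t}g^{-\cb/A} \gtrsim (1+f)^{2-\cc}$ from \eqref{e:dtgf}, the identity $(1+f(t))^{1-\cc} = (\cc-1)B^{-1}\int_t^{t_m}s^{-\ca}g^{-\cb/A}\,ds$ from \eqref{e:f0frl}, and the exponent count $\beta = (2-\cc)/(\cc-1)>1$ is clean and correct.

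Step (A), however, is a genuine gap, and you acknowledge it yourself. You need the two-sided asymptotic $(t_m-t)q_0(t) \to 2$ (or at least a strict lower bound exceeding $1/(2-\cc)$), and the tools you list do not deliver it. The Riccati inequality $q_0' \ge (\cc-1)q_0^2 - \frac{\ca}{t}q_0$ gives only the \emph{upper} bound $q_0 \lesssim 1/[(\cc-1)(t_m-t)]$; and \eqref{e:foupbd} gives $f_0 \lesssim (1+f)^2$, hence $q_0 \lesssim 1+f$, again the wrong direction. To pin down the third term $\cb(y-1)/(t^2 q_0^2)$ in the $\eta$-equation you effectively need $\lim_{t\to t_m}\chi(t) = 2\cb B/(3-2\cc)$, i.e.\ Proposition \ref{t:limG}. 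But in the paper's logical order that proposition depends on Lemma \ref{t:gmap}, which in turn is proved \emph{from} Lemma \ref{t:limtf}; invoking it here would be circular, and re-deriving it from scratch is precisely the missing asymptotic work. Without (A) the positivity of $\del{t}\Phi$ is not established, so the integration step at the end does not go through.

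For comparison, the paper's proof sidesteps all asymptotic analysis of $q_0$ by applying L'Hospital's rule directly to $(t_m-t)/f^{\cc-2}$, reducing the limit to $\lim f^{3-2\cc}/g^{-\cb/A}$, and then — after a case split on whether $g(t_m)>0$ or $g(t_m)=0$ — applying L'Hospital a second time, where both numerator and denominator derivatives collapse via the identity \eqref{e:f0frl} and \eqref{e:dtgf} into an expression proportional to $g^{-\cb/A}\to\infty$. The structural identity $f_0 = B^{-1}t^{-\ca}g^{-\cb/A}(1+f)^{\cc}$ is the engine that makes the ratio explicitly computable without ever needing the sharp rate $q_0 \sim 2/(t_m-t)$. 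If you want to salvage your route, you would need to prove the required lower bound on $(t_m-t)q_0$ independently (for instance by a careful comparison-ODE argument on $\eta$ with both one-sided bounds tightened) rather than deferring it.
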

\begin{proof} 
Since $\lim_{t\rightarrow t_m }(t_m-t)=0$ and $\lim_{t\rightarrow t_m }f^{2-\cc}(t)=\infty$ (due to $\cc\in (1,3/2)$ and Theorem \ref{t:mainthm0}(5)), by applying L'Hospital rule and using the representation of $f_0$ from \eqref{e:f0frl}, we obtain the following through straightforward calculations
	\begin{align}\label{e:tfest}
		\lim_{t\rightarrow t_m}[(t_m-t)f^{2-\cc}(t)]\overset{\text{L'Hospital}}{=} & \frac{1}{2-\cc}	\lim_{t\rightarrow t_m} \frac{1}{f^{\cc-3}f_0} \overset{\eqref{e:f0frl}}{=} \frac{B t_m^{\ca}}{2-\cc}	\lim_{t\rightarrow t_m} \frac{f^{3-2\cc} }{ g^{-\frac{\cb}{A}}(t) } .
	\end{align}

Let us now analyze this limit in two cases, assuming $\lim_{t\rightarrow t_m}g (t)>0$ or $=0$. First, if $g(t_m):=\lim_{t\rightarrow t_m}g (t)>0$ (since $g(t)\in (0,1]$ for $t\in[t_0,t_m)$ by Lemma \ref{t:f0fg}.$(3)$), then by \eqref{e:tfest}, we have $\lim_{t\rightarrow t_m}[(t_m-t)^{2-\cc}f(t)]=+\infty$ by Theorem \ref{t:mainthm0}.$(5)$.
On the other hand, if $g(t_m):=\lim_{t\rightarrow t_m}g (t)=0$, applying L'Hospital rule again, we have  
	\begin{align*}\label{e:tfest2}
	\lim_{t\rightarrow t_m}[(t_m-t)f^{2-\cc}(t)]
	\overset{\eqref{e:f0frl}}{=} & \frac{B t_m^{\ca}}{2-\cc}	\lim_{t\rightarrow t_m} \frac{f^{3-2\cc} }{ g^{-\frac{\cb}{A}}(t) }
	\overset{\text{L'Hospital}}{=}  \frac{B (3-2\cc) t_m^{\ca}}{2-\cc}	\lim_{t\rightarrow t_m} \frac{f^{2-2\cc} f_0 }{ \del{t} g^{-\frac{\cb}{A}}(t) } \notag  \\	
	\overset{\eqref{e:dtgf}\&\eqref{e:f0frl}}{=} &  \frac{ B^{-1}(3-2\cc)  }{ \cb (2-\cc)t_m^{\ca-2}}	\lim_{t\rightarrow t_m}   g^{-\frac{\cb}{A}}(t)   =+\infty .
\end{align*}
The proof is complete.
\end{proof}

\begin{lemma}\label{t:gmap}
	Suppose $g(t)$ is defined by \eqref{e:gdef},   $\cc\in(1,3/2)$,  and  $f\in C^2([t_0,t_m))$ (where $[t_0,t_m)$ is the maximal interval of existence of $f$ given by Theorem \ref{t:mainthm0}), and suppose $f$ solves the  ODE system  \eqref{e:feq0}--\eqref{e:feq1}. Then, $
	\lim_{t\rightarrow t_m}g(t)=0$.
\end{lemma}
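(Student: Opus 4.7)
The plan is to rewrite the statement in terms of the monotone representation of $g$ from Lemma \ref{t:f0fg}.$(3)$ and show that the relevant integral diverges at $t_m$. Precisely, since
\[
g^{-\frac{\cb}{A}}(t) = 1+\cb B \int_{t_0}^t s^{\ca-2} f(s)(1+f(s))^{1-\cc}\, ds,
\]
and $\cb, A, B>0$, the conclusion $\lim_{t\to t_m} g(t)=0$ is equivalent to
\[
\mathcal{I}(t_m):=\int_{t_0}^{t_m} s^{\ca-2} f(s)(1+f(s))^{1-\cc}\, ds = +\infty.
\]
The integrand is strictly positive on $(t_0,t_m)$ (by Lemma \ref{t:f0fg}.$(2)$), so it suffices to exhibit a divergent lower bound.

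I would split according to whether $t_m$ is finite or infinite, both covered by Theorem \ref{t:mainthm0}. In the case $t_m=+\infty$, I would invoke the sharp lower bound \eqref{e:fbds}, namely $1+f(s) > \exp(\mathtt{C} s^{(\ba+\triangle)/2}+\mathtt{D} s^{-1})$, together with $\ca>1$ and $1-\cc<0$, to estimate
\[
s^{\ca-2}f(s)(1+f(s))^{1-\cc}\geq \tfrac{1}{2} s^{\ca-2}(1+f(s))^{2-\cc}\geq \tfrac{1}{2} s^{\ca-2}\exp\bigl((2-\cc)\mathtt{C}\, s^{(\ba+\triangle)/2}\bigr)
\]
for $s$ large (where $f\geq 1$), which grows super-polynomially; hence the integral over $[t_0,\infty)$ diverges.

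In the case $t_m<\infty$, the lower bound \eqref{e:fbds} is not strong enough, and the core tool is instead Lemma \ref{t:limtf}, which gives $\lim_{t\to t_m}(t_m-t)f^{2-\cc}(t)=+\infty$. This provides, for any $M>0$, a time $t_M\in(t_0,t_m)$ such that $f^{2-\cc}(s)\geq M/(t_m-s)$ for $s\in[t_M,t_m)$. Writing $f(1+f)^{1-\cc}\geq \tfrac{1}{2} f^{2-\cc}$ once $f\geq 1$, and bounding $s^{\ca-2}$ from below by $\min(t_0^{\ca-2},t_m^{\ca-2})>0$ on the compact interval $[t_M,t_m]$, I obtain
\[
\mathcal{I}(t_m)\geq \int_{t_M}^{t_m} s^{\ca-2} f(s)(1+f(s))^{1-\cc}\, ds \geq \frac{CM}{2}\int_{t_M}^{t_m}\frac{ds}{t_m-s}=+\infty.
\]

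Finally, having established $\mathcal{I}(t_m)=+\infty$ in both cases, I conclude $g^{-\cb/A}(t)\to +\infty$ as $t\to t_m$, hence $g(t)\to 0$. The only delicate point I anticipate is the case $t_m<\infty$: the lower bounds on $f$ from Proposition \ref{t:bksl} do not by themselves guarantee a $1/(t_m-s)$-type blowup rate, so the argument genuinely relies on Lemma \ref{t:limtf}, whose proof in turn used the ODE \eqref{e:feq0} together with L'Hospital. Once that input is in hand, the rest is a straightforward comparison with the divergent harmonic-type integral.
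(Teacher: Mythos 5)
Your proposal is correct and follows essentially the same route as the paper: reduce to divergence of $\int_{t_0}^{t_m} s^{\ca-2}f(s)(1+f(s))^{1-\cc}\,ds$ via the representation in Lemma \ref{t:f0fg}.$(3)$, then handle $t_m<\infty$ via Lemma \ref{t:limtf} and a comparison with $\int 1/(t_m-s)\,ds$, and $t_m=\infty$ via the exponential lower bound \eqref{e:fbds}. The only difference is cosmetic: the paper splits $t_m=\infty$ into $\ca\geq 2$ and $\ca\in(1,2)$ and then invokes positivity plus a positive limit to get a uniform lower bound on the integrand, whereas you treat both uniformly and directly compare with a divergent integral; both are sound.
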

\begin{remark}
	Due to this lemma, it is convenient to continuously extend $g(t)$ from $[t_0,t_m)$ to $[t_0,t_m]$ by  defining  $g(t_m):=\lim_{t\rightarrow t_m}g(t)=0$, so that $g^{-1} (0)=t_m$.
\end{remark}

\begin{proof}
	By Lemma \ref{t:f0fg}.$(3)$, for $t\in[t_0,t_m)$, we have
	\begin{equation*}
		g^{-\frac{\cb}{A}} (t) = 1+ \cb B \int^t_{t_0} s^{\ca-2} f(s)(1+f(s))^{1-\cc}  ds  .
	\end{equation*}
Hence,  to conclude $
\lim_{t\rightarrow t_m}g(t)=0$, we only need to prove that $
\lim_{t\rightarrow t_m}g^{-\frac{\cb}{A}}(t)=\infty$. Furthermore, we need to verify $
\int^{t_m}_{t_0} s^{\ca-2} f(s)(1+f(s))^{1-\cc}  ds =\infty$.

First if $t_m<+\infty$, we aim to prove $
	\int^{t_m}_{t_0} s^{\ca-2} f(s)(1+f(s))^{1-\cc}  ds =\infty$,
that is, we will prove it is a divergence  improper integrals. Note the limit, with the help of Lemma \ref{t:limtf},
\begin{equation*}
\lim_{t\rightarrow t_m}	\frac{t^{\ca-2} f(t)(1+f(t))^{1-\cc}}{1/(t_m-t)} =t_m^{\ca-2}\lim_{t\rightarrow t_m} [(t_m-t) f(t)(1+f(t))^{1-\cc} ]=t_m^{\ca-2}\lim_{t\rightarrow t_m} [(t_m-t) f^{2-\cc}(t)]=+\infty,
\end{equation*}
and since the improper integral
\begin{equation*}
	\int^{t_m}_{t_0}\frac{1}{t_m-s} ds=+\infty,
\end{equation*}
that is, it is a divergent improper integral. By the comparison test for improper integrals, we obtain $
\int^{t_m}_{t_0} s^{\ca-2} f(s)(1+f(s))^{1-\cc}  ds =\infty$. Thus, by \eqref{e:gba}, we conclude that $\lim_{t\rightarrow t_m} g^{-\frac{\cb}{A}}(t)=\infty$.

On the other hand, if $t_m=+\infty$,
we aim to prove $
\int^{\infty}_{t_0} s^{\ca-2} f(s)(1+f(s))^{1-\cc}  ds =\infty$. If $\ca \geq 2$, then,  by Theorem \ref{t:mainthm0}.$(2)$, we have $
	\lim_{t\rightarrow +\infty} [ t^{\ca-2} f(t)(1+f(t))^{1-\cc}]    =\infty$. If $\ca \in(1, 2)$, then by \eqref{e:fbds} in Proposition \ref{t:bksl} and the fact that $\lim_{x\rightarrow \infty}(x^a/e^x)=0$, we obtain
	\begin{align*}
		&\lim_{t\rightarrow +\infty} \bigl[ t^{\ca-2} f(t)(1+f(t))^{1-\cc}  \bigr] =  \lim_{t\rightarrow +\infty} \bigl[ t^{\ca-2} (1+f(t))^{\frac{1}{2}} f^{\frac{3}{2}-\cc}(t) \bigr] \notag  \\
		&\hspace{1cm}>   \lim_{t\rightarrow +\infty} \Bigl[ t^{\ca-2}  \exp \Bigl(\frac{1}{2}( \mathtt{C} t^{\frac{\ba+\triangle}{2} }  +\mathtt{D}  t^{-1})\Bigr)   f^{\frac{3}{2}-\cc}(t) \Bigr]=\lim_{t\rightarrow +\infty} \Bigl[ t^{\ca-2}  \exp \Bigl(\frac{1}{2} \mathtt{C} t^{\frac{\ba+\triangle}{2} }   \Bigr)   f^{\frac{3}{2}-\cc}(t) \Bigr]=\infty.
	\end{align*}
That is, $\lim_{t\rightarrow +\infty} \bigl[ t^{\ca-2} f(t)(1+f(t))^{1-\cc}  \bigr] =\infty$ for any $\ca>1$. This leads to the conclusion that, since  $t^{\ca-2} f(t)(1+f(t))^{1-\cc}\in C^2([t_0,t_m))$ and it is strictly positive for all $t\in [t_0,t_m)$, by the property of continuous function, there is a constant $M>0$, such that $t^{\ca-2} f(t)(1+f(t))^{1-\cc} \geq M>0$ for all $t\in [t_0,\infty)$. Therefore,
\begin{equation*}
	\int^{\infty}_{t_0} s^{\ca-2} f(s)(1+f(s))^{1-\cc}  ds \geq M \int^{\infty}_{t_0}  ds=\infty ,
\end{equation*}
which, by \eqref{e:gba},  in turn implies $\lim_{t \rightarrow \infty} g(t) = 0$. This completes the proof.
\end{proof}

\begin{proposition}\label{t:limG}
	Suppose $\cc\in(1,3/2)$, $\cb>0$, $\ca>1$,  and $\chi$ is defined by \eqref{e:Gdef}. Let $f\in C^2([t_0,t_m))$, where $[t_0,t_m)$ is the maximal interval of existence of $f$ as given in Theorem \ref{t:mainthm0}, and assume that $f$ solves the ODE system \eqref{e:feq0}--\eqref{e:feq1}.
Then there exists a function $\mathfrak{G} \in C^1([t_0,t_m))$, such that for $t\in [t_0,t_m)$,
\begin{equation*}
	\chi(t)=\frac{2\cb B}{3-2\cc}+\mathfrak{G}(t).
\end{equation*}
where $\lim_{t\rightarrow t_m}\mathfrak{G}(t)=0$.
Moreover, there is a constant $C_\chi>0$ such that $0<\chi(t) \leq C_\chi$ for all $t\in [t_0,t_m)$. Additionally, there are continuous extensions of $\chi$ and $\mathfrak{G}$ such that $\chi\in C^0([t_0,t_m])$ and $\mathfrak{G}\in C^0([t_0,t_m])$, where we define  $\chi(t_m):=2\cb B/(3-2\cc)$ and $\mathfrak{G}(t_m):=0$.
\end{proposition}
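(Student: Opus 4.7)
The plan is to compute the limit $\lim_{t\to t_m}\chi(t)=\frac{2\cb B}{3-2\cc}$ by applying L'Hospital's rule to the second ($g$-and-$f$-only) expression of $\chi$ in \eqref{e:Gdef}, and then simply \emph{define} $\mathfrak{G}(t):=\chi(t)-\frac{2\cb B}{3-2\cc}$. Once the limit is in hand, the $C^1$-regularity of $\mathfrak{G}$ on $[t_0,t_m)$ follows for free from the $C^1$-regularity and strict positivity of $f$, $f_0$ and $g$ on $(t_0,t_m)$ by Lemma \ref{t:f0fg}, and every remaining claim (strict positivity of $\chi$ on $[t_0,t_m)$, existence of the continuous extensions to $t_m$, existence of a uniform upper bound $C_\chi$ by continuity on the compact interval $[t_0,t_m]$) is an immediate corollary.

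For the limit, I write $h(t):=g^{-\cb/A}(t)$, so that the second form in \eqref{e:Gdef} reads $\chi=H/(BF)$ with $H(t):=t^{2(1-\ca)}h^2(t)$ and $F(t):=f(t)(1+f(t))^{2(1-\cc)}$. By Theorem \ref{t:mainthm0}, Lemma \ref{t:gmap} and $2-\cc>0$ one has $F\to+\infty$, while $H\to+\infty$ follows from the integral representation of $h$ in Lemma \ref{t:f0fg}.$(3)$ together with the lower bounds of Proposition \ref{t:bksl}, so L'Hospital in the $\infty/\infty$ form applies. Using $h'(t)=\cb B\,t^{\ca-2}f(1+f)^{1-\cc}$ and $f_0=B^{-1}t^{-\ca}h(1+f)^\cc$ from Lemma \ref{t:f0fg}, a direct differentiation yields
\begin{equation*}
\frac{H'(t)}{BF'(t)} = \frac{2(1-\ca)\,t^{1-\ca}\,h(t)}{(1+f)^{1-\cc}\bigl[1+(3-2\cc)f\bigr]} + \frac{2\cb B\,f(t)}{1+(3-2\cc)f(t)},
\end{equation*}
and the second summand tends to $2\cb B/(3-2\cc)$ as $f\to\infty$, so the problem reduces to showing the first summand tends to $0$.

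The first summand is asymptotic to $\frac{2(1-\ca)}{3-2\cc}\cdot\frac{h(t)}{t^{\ca-1}f(t)^{2-\cc}}$ for large $f$, hence it suffices to prove $h(t)=o\bigl(t^{\ca-1}f(t)^{2-\cc}\bigr)$, which I will obtain by a second L'Hospital. Both $h$ and $t^{\ca-1}f^{2-\cc}$ diverge as $t\to t_m$ (since $\ca>1$ and $2-\cc>0$), and computing once more with the formula for $f_0$ gives
\begin{equation*}
\frac{d}{dt}\bigl[t^{\ca-1}f^{2-\cc}\bigr] = (\ca-1)t^{\ca-2}f^{2-\cc} + (2-\cc)B^{-1}t^{-1}h(t)f^{1-\cc}(1+f)^\cc,
\end{equation*}
whose dominant term for large $f$ is $(2-\cc)B^{-1}t^{-1}h(t)f(t)$. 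Dividing $h'(t)=\cb B\,t^{\ca-2}f(1+f)^{1-\cc}$ by this leading term gives
\begin{equation*}
\frac{h'(t)}{(d/dt)\bigl[t^{\ca-1}f^{2-\cc}\bigr]} \sim \frac{\cb B^2}{2-\cc}\cdot\frac{t^{\ca-1}(1+f)^{1-\cc}}{h(t)} = \frac{\cb B^2}{2-\cc}\,g^{\cb/A}(t)\,t^{\ca-1}(1+f)^{1-\cc},
\end{equation*}
and by Lemma \ref{t:limgtf} the right-hand side tends to $0$. L'Hospital then delivers $h/(t^{\ca-1}f^{2-\cc})\to 0$, which closes the limit computation.

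The one point requiring care is the applicability of L'Hospital at both stages: I must verify that the denominators are eventually strictly monotone and that the derivative-ratio limits genuinely exist. Both hold here because $F'>0$ and $(d/dt)[t^{\ca-1}f^{2-\cc}]>0$ on $(t_0,t_m)$ (every factor is strictly positive thanks to Lemma \ref{t:f0fg} and Proposition \ref{t:bksl}), and the asymptotic identifications used above can be made rigorous by factoring out $f^{2-\cc}$ or $(1+f)^{1-\cc}$ and observing that $f/(1+f)\to 1$. The main conceptual obstacle is already the one reduced above, namely the growth comparison $h=o(t^{\ca-1}f^{2-\cc})$, which ultimately reflects the fact that the integrand in the representation of $h$ grows strictly slower than the antiderivative in question; this is exactly what Lemma \ref{t:limgtf} encodes.
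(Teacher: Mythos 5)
Your proof is correct, and it reaches the same conclusion via essentially the same device (L'Hospital in $\infty/\infty$ form, with Lemma \ref{t:limgtf} as the decisive input) but through a genuinely different computational organization. The paper keeps the power $t^{2(\ca-1)}$ inside the denominator and differentiates the full product $B t^{2(\ca-1)} f(1+f)^{2(1-\cc)}$ at once (equation \eqref{e:long1}); this produces three terms, two of which contribute $\frac{1+(3-2\cc)f}{f}\to 3-2\cc$ and the third of which vanishes directly by Lemma \ref{t:limgtf}, so a single application of L'Hospital closes the argument. You instead factor $t^{2(1-\ca)}$ into the numerator $H=t^{2(1-\ca)}h^2$, which makes the denominator $F=f(1+f)^{2(1-\cc)}$ cleaner but leaves an extra summand $\frac{2(1-\ca)t^{1-\ca}h}{(1+f)^{1-\cc}[1+(3-2\cc)f]}$ in the derivative ratio; killing that term then requires the additional growth comparison $h=o(t^{\ca-1}f^{2-\cc})$, for which you invoke a second L'Hospital that again reduces to Lemma \ref{t:limgtf}. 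Both routes are sound. The paper's bookkeeping is slightly more economical (one L'Hospital); yours isolates more explicitly the two distinct scales at play — the $2\cb B/(3-2\cc)$ contribution and the subleading decaying remainder — which some may find conceptually cleaner. Two small remarks on rigor: (i) your claim that $H\to\infty$ is correct but not immediate when $t_m=\infty$ (since $t^{2(1-\ca)}\to 0$); it does follow from the stretched-exponential lower bound \eqref{e:fbds}, which makes $g^{-2\cb/A}$ grow superpolynomially, but you should state that chain explicitly rather than gesture at Lemma \ref{t:f0fg}.$(3)$ and Proposition \ref{t:bksl}. (ii) In the nested L'Hospital you bound the derivative ratio by replacing the full denominator with its dominant (positive) term; since both summands in $\frac{d}{dt}\bigl[t^{\ca-1}f^{2-\cc}\bigr]$ are positive, this monotone replacement only increases the ratio, so the squeeze is legitimate — worth a sentence so the reader sees the inequality direction.
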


\begin{proof}
Using \eqref{e:f0frl}, we first calculate
\begin{align}\label{e:long1}
	\del{t}(B t^{2(\ca-1)} f (1+f)^{2(1-\cc)})
	=&   (3-2\cc)   t^{\ca-2}  (1+f)^{2-\cc}   g^{-\frac{\cb}{A}}   - 2(1-\cc)   t^{\ca-2}     (1+f)^{1-\cc}     g^{-\frac{\cb}{A}}   \notag  \\
	&+2 B (\ca-1)t^{2\ca-3}f(1+f)^{2(1-\cc)}.
\end{align}

By Theorem \ref{t:mainthm0}, and since  $\cc\in(1,3/2)$ implies  $2(1-\cc)\in(-1,0)$ and $3-2\cc\in(0,1)$, we have the following limit,
	\begin{equation*}
		\lim_{t \rightarrow t_m}\bigl[f(1+f)^{2(1-\cc)}\bigr]=\lim_{t \rightarrow t_m}\bigl[(1+f)^{3-2\cc}-(1+f)^{2(1-\cc)}\bigr]=\infty.
	\end{equation*}
	With the help of this limit and the fact that $\lim_{t \rightarrow t_m}g^{-\frac{2\cb}{A}}(t)=\infty$ (due to Lemma \ref{t:gmap}), we  apply the L'Hospital rule using \eqref{e:f0frl}, \eqref{e:dtgf},  \eqref{e:Gdef} and Proposition \ref{t:limgtf},
\begin{align*}
	\lim_{t\rightarrow t_m} \chi(t) = & \lim_{t\rightarrow t_m} \frac{  g^{-\frac{2\cb}{A}} }{B t^{2(\ca-1)} f (1+f)^{2(1-\cc)}} \overset{\text{L'Hospital}}{=} \lim_{t\rightarrow t_m} \frac{  2g^{-\frac{\cb}{A}}\del{t} g^{-\frac{\cb}{A} } }{\del{t}(B t^{2(\ca-1)} f (1+f)^{2(1-\cc)})}  \notag  \\
	\overset{\eqref{e:dtgf}\&\eqref{e:long1}}{=} & \lim_{t\rightarrow t_m} \frac{ 2 \cb B  }{  (3-2\cc)    \frac{1+f}{f}      -   \frac{2(1-\cc)  }{f}         +2 B (\ca-1)t^{\ca-1}g^{\frac{\cb}{A}}  (1+f)^{1-\cc}}  \notag  \\
	\overset{\text{Lemma } \ref{t:limgtf}}{=}  &   \frac{  2 \cb B }{3-2\cc}   ,
\end{align*}
which means that  $\lim_{t\rightarrow t_m}\mathfrak{G}(t)=0$. Since $\chi(t)>0$ (which follows from its definition in \eqref{e:Gdef}) and is a continuous function on  $[t_0,t_m)$, we apply Proposition \ref{t:cont2} (see Appendix \ref{s:cont}) to conclude its boundedness, thus completing the proof. 
\end{proof}

The second crucial quantity in the Fuchsian formulation is
\begin{equation}\label{e:xidef}
	\xi(t):=1/[g(t) (1+f(t)) ]. 
\end{equation}
The next proposition proves that $\xi$ is bounded and that its limit approaches zero as $t$ tends to $t_m$. 

\begin{proposition}\label{t:fginv}
 		Suppose that $f\in C^2([t_0,t_m))$, where $[t_0,t_m)$ is the maximal interval of existence for $f$ given by Theorem \ref{t:mainthm0}, and that $f$ solves the ODE system \eqref{e:feq0}--\eqref{e:feq1}. Let $g(t)$ be defined by \eqref{e:gdef} and $\xi(t)$ be given by \eqref{e:xidef}. Then $\xi\in C^1([t_0,t_m))$ and
 		\begin{equation}\label{e:fginv}
 			\lim_{t\rightarrow t_m} \xi(t)= 0 .
 		\end{equation}
 		Moreover, there is a constant $C_\star>0$ such that $0<\xi(t) \leq C_\star$ for all $t\in[t_0,t_m)$. Additionally, there is a continuous extension of $\xi$ satisfying $\xi\in C^0([t_0,t_m])$, obtained by defining $\xi(t_m) := 0$.

\end{proposition}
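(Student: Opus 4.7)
The plan is to eliminate $g$ from the expression for $\xi$ using Proposition \ref{t:limG}, and then read the limit off a purely algebraic expression in $f$ and $t$. Inverting the defining relation \eqref{e:Gdef} for $\chi$ gives
$$
g(t) = \bigl(B\,\chi(t)\, f(t)(1+f(t))^{2(1-\cc)}\, t^{2(\ca-1)}\bigr)^{-A/(2\cb)},
$$
so substituting into $\xi(t) = 1/[g(t)(1+f(t))]$ yields
$$
\xi(t) = (B\chi(t))^{A/(2\cb)}\, t^{A(\ca-1)/\cb}\, f(t)^{A/(2\cb)}\,(1+f(t))^{A(1-\cc)/\cb - 1}.
$$
Using the trivial bound $f \leq 1+f$, one obtains
$$
0 < \xi(t) \leq (B\chi(t))^{A/(2\cb)}\, t^{A(\ca-1)/\cb}\,(1+f(t))^{A(3-2\cc)/(2\cb) - 1}.
$$

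The decisive observation is that the hypothesis $A \in (0,\, 2\cb/(3-2\cc))$ forces the exponent $A(3-2\cc)/(2\cb) - 1 < 0$. Since $\chi$ is uniformly bounded by Proposition \ref{t:limG} and $f(t) \to +\infty$ as $t \to t_m$ by Theorem \ref{t:mainthm0}.(2), this immediately gives $\xi(t) \to 0$ whenever $t_m < \infty$. For $t_m = +\infty$ I would take the logarithm and invoke the lower bound $\ln(1+f(t)) > \mathtt{C}\, t^{(\ba+\triangle)/2} + \mathtt{D}\, t^{-1}$ from Proposition \ref{t:bksl}: the negative term $(A(3-2\cc)/(2\cb)-1)\ln(1+f)$ drives $\ln \xi$ to $-\infty$ at a polynomial rate $\sim -t^{(\ba+\triangle)/2}$, which dominates the logarithmically growing $\frac{A(\ca-1)}{\cb}\ln t$ coming from the factor $t^{A(\ca-1)/\cb}$.

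Regularity and boundedness are then quick. Since $f \in C^2([t_0,t_m))$ and $g \in C^1([t_0,t_m))$ (with derivative given by \eqref{e:dtg0}), and $g(1+f) > 0$ on this interval, we have $\xi \in C^1([t_0,t_m))$, with $\xi > 0$ throughout. Boundedness of $\xi$ follows from continuity on $[t_0,t_m)$ together with the vanishing limit at $t_m$ via Proposition \ref{t:cont2}. Setting $\xi(t_m) := 0$ produces the desired continuous extension to $[t_0,t_m]$. The only mildly subtle step is the growth comparison in the case $t_m = \infty$, but once the reduction via $\chi$ has been carried out, this is just polynomial versus logarithmic growth, and the strict inequality $A < 2\cb/(3-2\cc)$ is precisely what supplies the margin needed.
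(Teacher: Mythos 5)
Your argument is correct and follows essentially the same route as the paper's proof: both use the uniform bound on $\chi$ from Proposition~\ref{t:limG} to eliminate $g$, reduce to the estimate $\xi \lesssim t^{A(\ca-1)/\cb}(1+f)^{A(3-2\cc)/(2\cb)-1}$ whose exponent on $1+f$ is negative precisely because $A < 2\cb/(3-2\cc)$, handle $t_m<\infty$ and $t_m=\infty$ by the same dichotomy (the latter via the lower bound from Proposition~\ref{t:bksl} and polynomial-vs-exponential growth), and finish with Proposition~\ref{t:cont2}. The only cosmetic difference is that you invert the defining relation for $\chi$ to express $\xi$ exactly and then bound, whereas the paper first writes the inequality $g^{-2\cb/A}\le C_\chi B f(1+f)^{2(1-\cc)}t^{2(\ca-1)}$ and then manipulates; the resulting exponent and conclusion are identical.
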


\begin{proof}
	By the definition in \eqref{e:Gdef} of $\chi(t)$ and Proposition \ref{t:limG} (i.e., $0<\chi\leq C_\chi$), we obtain
	\begin{equation}\label{e:gest2}
		0< g^{-\frac{2\cb}{A}}(t)  \leq C_\chi B f(t) (1+f(t))^{2(1-\cc)} t^{2(\ca-1)}.
	\end{equation}
Since $1+f>f$, applying  \eqref{e:gest2}, we arrive at
\begin{align*}
	\xi^{\frac{2\cb}{A}} =g^{-\frac{2\cb}{A}}  (1+f)^{-\frac{2\cb}{A}} <   \frac{ (1+f )^{1-\frac{2\cb}{A}} }{g^{\frac{2\cb}{A}} f}\leq B C_\chi  t^{2(\ca-1)}(1+f)^{3-2\cc-\frac{2\cb}{A}} .
\end{align*}
Thus,
\begin{equation}\label{e:gfest}
	0<\xi <   B^{\frac{A}{2\cb}}  C_\chi^{\frac{A}{2\cb}} t^{A(\ca-1)/\cb}(1+f)^{\frac{A}{2\cb}(3-2\cc-\frac{2\cb}{A})}.
\end{equation}
Since $A\in(0,2\cb/(3-2\cc))$ (as recalled in \eqref{e:gdef}) and $\cc\in(1,3/2)$, it follows that  $3-2\cc-\frac{2\cb}{A}<0$.  Now, we consider two cases: $(1)$ If $t_m<\infty$, then, directly applying \eqref{e:gfest} and Theorem \ref{t:mainthm0} (i.e., $\lim_{t\rightarrow t_m} f(t)=+\infty$), we conclude \eqref{e:fginv}. $(2)$ If $t_m=\infty$, 
using the fact that $3-2\cc-\frac{2\cb}{A}<0$ and \eqref{e:fbds} in Proposition \ref{t:bksl}, the right-hand side of \eqref{e:gfest} can be estimated as follows 
\begin{equation*}
	t^{A(\ca-1)/\cb}(1+f)^{\frac{A}{2\cb}(3-2\cc-\frac{2\cb}{A})} < t^{A(\ca-1)/\cb} \exp\Bigl[\frac{A}{2\cb}\Bigl(3-2\cc-\frac{2\cb}{A}\Bigr) \bigl( \mathtt{C} t^{\frac{\ba+\triangle}{2} }  +\mathtt{D}  t^{-1}\bigr)\Bigr].
\end{equation*}
Then, using the fact that $\lim_{x\rightarrow \infty}(x^a/e^x)=0$,
\begin{equation*}
\lim_{t \rightarrow \infty} \Bigl(t^{A(\ca-1)/\cb} \exp\Bigl[\frac{A}{2\cb}\Bigl(3-2\cc-\frac{2\cb}{A}\Bigr) \bigl( \mathtt{C} t^{\frac{\ba+\triangle}{2} }  +\mathtt{D}  t^{-1}\bigr)\Bigr]\Bigr)=\lim_{t \rightarrow \infty} \biggl(\frac{t^{A(\ca-1)/\cb}}{\exp\bigl[-\frac{A}{2\cb}\bigl(3-2\cc-\frac{2\cb}{A}\bigr)  \mathtt{C} t^{\frac{\ba+\triangle}{2} }   \bigr]} \biggr)=0.
\end{equation*}
This means that the right-hand side of \eqref{e:gfest} tends to $0$ as $t \rightarrow t_m$, which implies \eqref{e:fginv}.
Since $\xi(t)$ is a continuous function on $[t_0,t_m)$, applying Proposition \ref{t:cont2} (see Appendix \ref{s:cont}), we conclude its boundedness, completing the proof. 
\end{proof}


\section{The analysis of the perturbed solutions}\label{s:fuchian}		
This section focuses on analyzing the main second order nonlinear hyperbolic equations \eqref{e:maineq0}--\eqref{e:maineq1} with parameters satisfying \eqref{e:abcdk}. For reference, these equations are given by
 \begin{gather}
	\Box \varrho(x^\mu) +\frac{\mathcal{a} }{t} \del{t}\varrho(x^\mu) -
	\frac{\mathcal{b}}{t^2} \varrho(x^\mu) (1+  \varrho(x^\mu) ) -\frac{\mathcal{c}-\ck}{1+\varrho(x^\mu)} (\del{t}\varrho(x^\mu))^2=  \ck F(t) , \label{e:maineq0a}   \\
	\varrho|_{t=t_0}= \mathring{\varrho}(x^i) \AND 	\del{t} \varrho|_{t=t_0}=   \mathring{\varrho}_0(x^i)  . \label{e:maineq1a}
\end{gather}

The \textit{goal} of this section is to prove Theorem \ref{t:mainthm1}. We recall it here for convenience. 
\begin{theorem}\label{t:fuc1}
	Suppose $s\in \Zbb_{\geq \frac{n}{2}+3}$, and let  $\ca, \cb, \cc, \ck$ be constants satisfying  \eqref{e:abcdk}.  Assume that $f\in C^2([t_0,t_m))$ is the solution to \eqref{e:feq0}--\eqref{e:feq1} given by Theorem \ref{t:mainthm0}, where $\mf>0$ and $\mf_0>0$ are prescribed. Furthermore, assume that $t_m>t_0$ so that $[t_0,t_m)$ is the maximal interval of existence for $f$ as stated in Theorem \ref{t:mainthm0}. 
	Then, there exist small constants  $\sigma_\star,\sigma>0$, such that if the initial data satisfies
	\begin{equation*} 
		\Bigl\|\frac{\mathring{\varrho}}{\mf}-1\Bigr\|_{H^s(\Tbb^n)}+  	\Bigl\|\frac{\mathring{\varrho}_0}{\mf_0}-1\Bigr\|_{H^s(\Tbb^n)}+  	\Bigl\|\frac{\cm  \mathring{\varrho}_i}{1+\mf}\Bigr\|_{H^s(\Tbb^n)} \leq \frac{1}{2} \sigma_\star\sigma ,
	\end{equation*}
	then there is a  solution $\varrho\in C^2([t_0,t_m)\times \Tbb^n)$ to the equation \eqref{e:maineq0}--\eqref{e:maineq1} satisfying the estimate
	\begin{equation}\label{e:mainest}
		\Bigl\|\frac{\varrho(t)}{f(t)}-1\Bigr\|_{H^s(\Tbb^n)}+  	\Bigl\|\frac{\del{t}\varrho(t)}{f_0(t)}-1\Bigr\|_{H^s(\Tbb^n)}+ 	\Bigl\|\frac{\cm \del{i}\varrho(t)}{1+f(t)}\Bigr\|_{H^s(\Tbb^n)}\leq   C \sigma<1
	\end{equation}
	for $t\in[t_0,t_m)$, where $C>0$ is some constant.
	Moreover, $\varrho$  blows up at $t=t_m$, i.e.,
	\begin{equation*}
		\lim_{t\rightarrow t_m} \varrho(t,x^i)=+\infty \AND \lim_{t\rightarrow t_m} \varrho_0(t,x^i)=+\infty ,
	\end{equation*}
	with the rate estimates $(1-C\sigma)f \leq  \varrho \leq (1+C\sigma)f$ and $(1-C\sigma)f_0 \leq \varrho_0 \leq (1+C\sigma)f_0$ 	for $t\in[t_0,t_m)$.
\end{theorem}

\begin{remark} 
	The regularity condition $s\in \mathbb{Z}_{\geq \frac{n}{2}+3}$ on the initial data in Theorem \ref{t:fuc1} may not be optimal, as the proof does not take into account the semilinearity of these equations \eqref{e:maineq0a}. As pointed out in \cite[Remark 4.3]{Beyer2020}, regularity improvements can be achieved by establishing global existence theorems for Fuchsian systems in their semilinear form (analogous to Appendix \ref{s:fuc}) which are adapted to semilinear equations.
\end{remark}

The basic idea behind proving this theorem is to transform \eqref{e:maineq0a}--\eqref{e:maineq1a} into a Fuchsian formulation (see the model \eqref{e:model1}--\eqref{e:model2} in Appendix \ref{s:fuc}). Then, roughly speaking, we use the results of this formulation (i.e., Theorem \ref{t:fuc} in Appendix \ref{s:fuc}) to derive the estimate \eqref{e:mainest}. We will elaborate this transformation in the next three steps (Step $(1)$--Step $(3)$) in the subsequent sections.

\subsection{Step $1$: Perturbation Equations}
Let $f(t)$ solve equations \eqref{e:feq0}--\eqref{e:feq1}. We then define
		\begin{align}
			w(t,x^i)& := \varrho(t,x^i)-f(t), \label{e:ww} \\
			w_{0}(t,x^i)&:=\partial_{t}w(t,x^i)=\partial_{t}\varrho(t,x^i)- f_0(t), \label{e:w0}
			\\
			w_{i}(t,x^i)&:=\partial_{i}w(t,x^i)=\partial_{i}\varrho(t,x^i) .\label{e:wi}
		\end{align}
Substituting the variables \eqref{e:ww}--\eqref{e:wi} into \eqref{e:maineq0a} and using equation \eqref{e:feq0} for $f$, we arrive at the following perturbation equation
\begin{align}\label{e:weq1}
	& \del{t} w_0 -  \mathcal{g}^{ij}\del{j} w_i+\frac{\ca}{t}  w_0 -\frac{\cb}{t^2}  (w+w^2+2fw) - \frac{\cc-\ck}{1+w+f} w^2_0  \notag  \\
	&\hspace{1cm} - \frac{2 (\cc-\ck) f_0 w_0}{1+w+f} + \frac{(\cc-\ck) f_0^2 w}{(1+w+f)(1+f)} =  0 .
\end{align}		
Through direct computation from \eqref{e:w0} and \eqref{e:wi}, we obtain
\begin{align}
	\del{t}w_i=&\del{i}\del{t}w=\del{i}w_0,  \label{e:weq2} \\
	\partial_{t}w =&w_{0} . \label{e:weq3}
\end{align}
Thus, equations \eqref{e:weq1}--\eqref{e:weq3} form a first order system.

\subsection{Step $2$: Time transformation and Fuchsian formulation}\label{s:stp2}

In this section, we aim to rewrite \eqref{e:weq1}--\eqref{e:weq3} into the Fuchsian formulation (see \eqref{e:model1} in Appendix \ref{s:fuc}). To achieve this, we first need to introduce an appropriate time transformation, as Fuchsian formulations rely\footnote{As noted in our previous works \cite{Liu2018b,Liu2018,Liu2022a,Liu2018a,Liu2022}, the key and most challenging steps in applying this approach involve constructing appropriate time compactifications and Fuchsian fields.} on a suitably \textit{compactified time coordinate}. The function $g(t)$ given in §\ref{t:ttf} serves as a viable candidate for this transformation.
Additionally, the Fuchsian formulation heavily depends on variable selection (i.e., \textit{Fuchsian fields}). A crucial requirement for obtaining the Fuchsian formulation is the choice of well-behaved, decaying Fuchsian fields.

To achieve these objectives, we introduce $(i)$ a \textit{time transformation},
\begin{align}\label{e:ttf}
	\tau := -g(t)=& -\exp\Bigl(-A\int^t_{t_0} \frac{f(s)(f(s)+1)}{s^2f_0(s)} ds\Bigr)  \notag  \\
	= & -\Bigl(1+ \cb B \int^t_{t_0} s^{\ca-2} f(s)(1+f(s))^{1-\cc}  ds \Bigr)^{-\frac{A}{\cb}}\in[-1,0),
\end{align}
where, as recalled in §\ref{t:ttf},  $g(t)$  is defined by \eqref{e:gdef} (or equivalently, by \eqref{e:gdef2}), and $A\in(0,2\cb/(3-2\cc))$ is an arbitrary constant.  This time transformation maps the initial time $t=t_0$ to $\tau=-1$ and the maximal time of existence $t=t_m$ to $\tau=0$.
In addition, we  introduce  $(ii)$ \textit{rescaled Fuchsian fields} as follows (recall that $\cm$ is a constant given in \eqref{e:Fdef}),
\begin{align}
    u(t,x^i)= &  \frac{1}{f(t)} w (t,x^i)	,  \label{e:u} \\
	u_0(t,x^i)= &  \frac{1}{f_0(t)}  w_0(t,x^i),
	\label{e:u0}\\
    u_i(t,x^i)=& \frac{\cm}{1+f(t)} w_i(t,x^i).  \label{e:ui}
\end{align}

Following the notation conventions in \eqref{e:udl} from \S\ref{s:AIN}, $\uu$  represents $u$ in the compactified time coordinate $\tau$. We will frequently use this notation in the following. For instance, we define:
\begin{equation}\label{e:udvar}
	\underline{u}(\tau,x^i)=u(g^{-1}(-\tau),x^i), \quad \uuo(\tau,x^i)=  	u_0(g^{-1}(-\tau),x^i) \AND \uui(\tau,x^i)=  u_i(g^{-1}(-\tau),x^i) .
\end{equation}

Under the time transformation \eqref{e:ttf}, using \eqref{e:dtg0}, we obtain the following expression for later use:\footnote{Note that the underlines follow the notation convention in \eqref{e:udl} from §\ref{s:AIN}.}
\begin{equation}\label{e:ttf2}
	\del{\tau} \underline{ u}_\mu= -\underline{[g^\prime(t)]^{-1} \del{t} u_\mu}
	= \underline{ \frac{t^{2-\ca} (1+f )^{\cc-1} }{A  B   g^{\frac{\cb}{A}+1}  f} \del{t} u_\mu } .
\end{equation}

\underline{(1) Rewrite \eqref{e:weq1}:} By using \eqref{e:u}--\eqref{e:ui}, we obtain
\begin{align}\label{e:var}
	w=   fu, \quad w_0=   f_0 u_0, \quad w_i=  \cm^{-1} (1+f)u_i.
\end{align}
Substituting these expressions \eqref{e:var} into \eqref{e:weq1}, we derive
\begin{align*}
	&    u_0 \del{t} f_0 +  f_0 \del{t} u_0 - \cm^{-1} (1+f)  \mathcal{g}^{ij} \del{j} u_i+\frac{\ca}{t}     f_0 u_0   -\frac{\cb}{t^2}  \bigl(   f u+(   f u)^2+2   f^2 u\bigr)  \notag  \\
	& \hspace{1cm} - \frac{(\cc-\ck)(   f_0   u_0)^2}{1+  f u+f} - \frac{2(\cc-\ck)     f_0^2  u_0}{1+   f u+f}  + \frac{(\cc-\ck)    u f  f_0^2}{(1+  f u+f)(1+f)} =0 .
\end{align*}
Using the ODE \eqref{e:feq0} for $f$ to substitute $\del{t}f_0$ in the first term above, and noting the cancellation of the term  $  \ca t^{-1} f_0 u_0$, we obtain
\begin{align}\label{e:stp1}
	&  \frac{  \cb}{t^2} f (1+f ) u_0+ \frac{ \cc f_0^2 u_0 }{1+f }  +   f_0 \del{t} u_0 -   \cm^{-1} (1+f)  \mathcal{g}^{ij}\del{j} u_i  -\frac{\cb}{t^2}  (  f u+  f^2 u^2+2   f^2 u) \notag  \\
	& \hspace{1cm} - \frac{(\cc-\ck)( f_0   u_0)^2}{1+  f u+f} - \frac{2(\cc-\ck)    f_0^2   u_0  }{1+  f u+f} + \frac{(\cc-\ck)   u f f_0^2}{(1+  f u+f)(1+f)} = 0.
\end{align}		
Multiplying both sides of \eqref{e:stp1} by $  1/(f_0  g^{\frac{\cb}{A}+1})$, we obtain
\begin{align}\label{e:stp3}
	&   - g^{-\frac{\cb}{A}-1}  \del{t} u_0 +  \frac{(1+f)\mathcal{g}^{ij} }{\cm  f_0  g^{\frac{\cb}{A}+1}} \del{j} u_i= \frac{  \cb f (1+f )}{t^2f_0g^{\frac{\cb}{A}+1}}   u_0+ \frac{ \cc f_0   g^{-\frac{\cb}{A}-1} u_0 }{1+f }  -\frac{  \cb f  }{t^2f_0g^{\frac{\cb}{A}+1} }   (  u+   f  u^2+2   f  u)  \notag  \\
	& \hspace{1cm}  - \frac{(\cc-\ck) f_0   g^{-\frac{\cb}{A}-1} u_0^2}{1+  f u+f}   - \frac{2(\cc-\ck)   f_0  g^{-\frac{\cb}{A}-1} u_0  }{1+  f u+f} + \frac{ (\cc-\ck)   u f f_0  g^{-\frac{\cb}{A}-1}}{(1+  f u+f)(1+f)} .
\end{align}	
Using the definition \eqref{e:Gdef} of $\chi$ and the expression \eqref{e:f0frl} for $f_0$,  we obtain
\begin{equation}\label{e:f0g}
	f_0 g^{-\frac{\cb}{A}} =  (1+f )^{2-\cc} f t^{\ca-2} \chi \AND 1/(f_0 g^{\frac{\cb}{A}})= B  t^{\ca} (1+f )^{-\cc}.
\end{equation} 
Multiplying $t^{2-\ca} (1+f )^{\cc-1}/(ABf)$ on both sides of \eqref{e:stp3} and using \eqref{e:f0g} to replace the corresponding terms in this equation \eqref{e:stp3}, we obtain 
\begin{align}\label{e:stp4}
	& - \frac{t^{2-\ca} (1+f )^{\cc-1}}{ A  B   g^{\frac{\cb}{A}+1}  f } \del{t} u_0  +   \frac{ t^{2 }   \mathcal{g}^{ij}}{A  \cm  f  g } \del{j} u_i
	=  \frac{  \cb}{A g}    u_0+ \frac{ \cc       \chi u_0 }{AB g} -\frac{  \cb}{  Ag(1+f )}   u-\frac{   \cb f }{  Ag(1+f )}     u^2\notag  \\
	& \hspace{1cm} -\frac{2   \cb f }{  Ag(1+f )}   u   - \frac{(\cc-\ck) (1+f)   \chi u_0^2}{AB(1+f+  f u)g} - \frac{2(\cc-\ck)    (1+f )    \chi u_0  }{AB(1+f+f u)g} + \frac{ (\cc-\ck)   u f   \chi}{AB(1+f+  f u) g}  .
\end{align}	
Then we note the following simple identities from straightforward calculations,
\begin{gather*}
	\frac{ f }{ 1+f }  =1-\frac{1 }{ 1+f }, \quad \frac{ 1+f   }{(1+f+  f u)g}=\frac{1}{g}-\frac{\mathfrak{R}(t,u)}{g} , \\
	\frac{ f   }{(1+f+  f u)g}=\frac{1}{g}-\frac{1}{(1+f)g}-\frac{\mathfrak{R}(t,u)}{ g}+\frac{\mathfrak{R}(t,u)}{ (1+f)g},
\end{gather*}
where
\begin{equation*}
	C^0\bigl([t_0,t_m], C^\infty(B_R(0))\bigr)\ni\mathfrak{R}(t,u):=\frac{  u}{1+1/f+  u}
\end{equation*}
for some constant $R>0$,  with $\mathfrak{R}(t, 0)=0$.
Using these identities, along with the definition \eqref{e:xidef} of $\xi$, we rewrite equation \eqref{e:stp4} as
\begin{align}\label{e:stp5}
& -  \frac{t^{2-\ca}    (1+f )^{\cc-1}}{A   B g^{\frac{\cb}{A}+1} f}  \del{t} u_0  +   \frac{ t^{2 }  \mathcal{g}^{ij} }{A   \cm   f  g}\del{j} u_i
=  \frac{1}{A g} \Bigl( \cb + \frac{(2 \ck-\cc)     \chi  }{ B } \Bigr) u_0 - \frac{(\cc-\ck) \chi u_0^2}{AB g}    \notag  \\
&\hspace{1cm} + \frac{(\cc-\ck)   \chi  u_0^2\mathfrak{R}(t,u) }{AB g} + \frac{2(\cc-\ck)      \chi u_0 	\mathfrak{R}(t,u) }{ABg}    + \frac{ 1}{A  g} \Bigl(\frac{(\cc-\ck)   \chi}{B} -2  \cb   \Bigr)  u -\frac{  \cb }{  Ag }     u^2 \notag  \\
&\hspace{1cm} -\frac{ (\cc-\ck)   u   \chi\mathfrak{R}(t,u)}{AB g}+\frac{  \cb \xi}{  A}     u^2 +\frac{  \cb \xi}{  A}   u   - \frac{ (\cc-\ck)   u   \chi \xi}{AB}  + \frac{ (\cc-\ck)   u  \chi \xi \mathfrak{R}(t,u) }{AB} .
\end{align}	
Using the definition \eqref{e:Fdef} of $\cg^{ij}$, the expression \eqref{e:f0frl} of $f_0$, and the definition \eqref{e:Gdef} of $\chi$, we obtain
\begin{equation}\label{e:gij}
	\cg^{ij}=\frac{\mathcal{m}^2 f_0^2}{(1+f)^2}\delta^{ij}  =  \mathcal{m}^2 B^{-2} t^{-2\ca} g^{-\frac{2\cb}{A}}(1+f)^{2\cc-2} \delta^{ij} =\mathcal{m}^2 B^{-1} f t^{-2} \chi \delta^{ij} .
\end{equation}
Then, with the help of Proposition \ref{t:limG}, by replacing $\chi$ with $\frac{2\cb B}{3-2\cc}+\mathfrak{G}(t)$ and using the expression \eqref{e:gij} of $\cg^{ij}$, we transform equation \eqref{e:stp5} into
\begin{align}\label{e:stp6}
	& -  \frac{t^{2-\ca} (1+f )^{\cc-1}}{ABg^{\frac{\cb}{A}+1} f}     \del{t} u_0  + \frac{\mathcal{m} \chi  }{AB g}  \delta^{ij} \del{j} u_i  \notag  \\
	= & \frac{1}{A g} \Bigl( \cb +   \frac{2\cb (2 \ck-\cc)    }{3-2\cc}+ B^{-1}(2 \ck-\cc)  \mathfrak{G}(t) +\mathfrak{H}(t,u_0,u)  \Bigr) u_0   \notag  \\
	&   + \frac{ 1}{A  g} \Bigl( \frac{2\cb (\cc-\ck)   }{3-2\cc}  -2  \cb +B^{-1}(\cc-\ck) \mathfrak{G}(t)  +	\mathfrak{F}(t, u) \Bigr) u   + \mathfrak{L}(t,u) 	,
\end{align}
where we define
\begin{align}
	\mathfrak{H}(t,u_0,u):= & - \frac{(\cc-\ck)\chi }{B} \bigl(   u_0 +     u_0 \mathfrak{R}(t,u) +  2   	\mathfrak{R}(t,u) \bigr) , \label{e:rm1} \\
	\mathfrak{F}(t,u):= & -  \cb u +  (\cc-\ck) B^{-1}     \chi\mathfrak{R}(t,u) ,\label{e:rm2} \\
	\mathfrak{L}(t,u):=&\frac{ \cb \xi}{  A}     u^2 +\frac{  \cb \xi}{  A}   u   - \frac{ (\cc-\ck)   u   \chi \xi}{AB}  + \frac{ (\cc-\ck)  u  \chi \xi \mathfrak{R}(t,u) }{AB} .  \label{e:rm3}
\end{align}
We can verify,  using Propositions \ref{t:limG} and \ref{t:fginv} for $\chi(t)$ and $\xi(t)$, that $\mathfrak{H}\in C^0\bigl([t_0,t_m], C^\infty(B_R(0)\times B_R(0))\bigr)$ and $\mathfrak{F}, \mathfrak{L}\in C^0\bigl([t_0,t_m], C^\infty(B_R(0))\bigr)$, with $\mathfrak{H}(t,0,0)=0$, $\mathfrak{F}(t,0)=0$, and  $\mathfrak{L}(t,0)=0$.

Now, let us transform the above equation \eqref{e:stp6}, which is expressed in terms of the independent variable $t$, into an equation where the independent variable is $\tau$, via the change of variable $t=g^{-1}(-\tau)$. Using the notation given in \eqref{e:udvar} and the expression for $\del{\tau}\uuo$ from \eqref{e:ttf2}, we arrive at 
\begin{align}\label{e:fuc1}
	 \del{\tau} \underline{ u}_0 +         \frac{\cm \underline{\chi}}{AB\tau} \delta^{ij} \del{j}  \uui
	=  &  \frac{1}{A \tau} \Bigl( \cb +   \frac{2\cb (2 \ck-\cc)    }{3-2\cc}+ B^{-1}(2 \ck-\cc)  \underline{\mathfrak{G}}(\tau)   +\underline{\mathfrak{H}}(\tau,\uuo,\uu)  \Bigr) \uuo   \notag  \\
	&   + \frac{ 1}{A  \tau} \Bigl( \frac{2\cb (\cc-\ck)   }{3-2\cc}  -2  \cb +B^{-1}(\cc-\ck) \underline{\mathfrak{G}}(\tau)  +	\underline{\mathfrak{F}}(\tau, \uu) \Bigr) \uu   - \underline{\mathfrak{L}}(\tau, \uu) 	,
\end{align}
where, by \eqref{e:rm1}--\eqref{e:rm3} and the notation conventions introduced in \S\ref{iandc},  $\underline{\mathfrak{G}}(\tau) := \mathfrak{G}(g^{-1}(-\tau))$, $\underline{\mathfrak{R}}(\tau,\uu)= \mathfrak{R}(g^{-1}(-\tau),u(g^{-1}(-\tau),x^i))$ and
\begin{align}
	\underline{\mathfrak{H}}(\tau,\uuo,\uu):= & - \frac{(\cc-\ck)\underline{\chi} }{B} \bigl(   \uuo +     \uuo \underline{\mathfrak{R}}(\tau,\uu) +  2   	\underline{\mathfrak{R}}(\tau,\uu) \bigr) , \label{e:Rmh}\\
	\underline{\mathfrak{F}}(\tau,\uu):= & -  \cb \uu +  (\cc-\ck) B^{-1}     \underline{\chi}\underline{\mathfrak{R}}(\tau,\uu) , \label{e:Rmf}\\
	\underline{\mathfrak{L}}(\tau,\uu):=&\frac{ \cb \underline{\xi}}{  A}     \uu^2 +\frac{  \cb \underline{\xi}}{  A}   \uu   - \frac{ (\cc-\ck)   \uu   \underline{\chi} \underline{\xi}}{AB}  + \frac{ (\cc-\ck)  \uu  \underline{\chi} \underline{\xi} \underline{\mathfrak{R}}(\tau,\uu) }{AB} .  \label{e:Rml}
\end{align}
We can verify, once again, using Propositions \ref{t:limG} and \ref{t:fginv}, that since $\xi(t)$ and $\chi(t)$ are continuous and bounded, we have the following regularity properties:  $\underline{\mathfrak{H}}\in C^0\bigl([-1,0], C^\infty(B_R(0)\times B_R(0))\bigr)$, and $\underline{\mathfrak{F}}, \underline{\mathfrak{L}} \in C^0\bigl([-1,0], C^\infty(B_R(0))\bigr)$, and $\underline{\mathfrak{H}}(\tau,0,0)=0$, $\underline{\mathfrak{F}}(\tau,0)=0$, $\underline{\mathfrak{L}}(\tau,0)=0$.

\underline{$(2)$ Rewrite \eqref{e:weq2}:}			
Inserting \eqref{e:var} into \eqref{e:weq2}, we obtain
\begin{equation*}
	\del{t} u_i -  \frac{\cm  f_0}{1+f}  \del{i}u_0 = -  \frac{ f_0}{1+f}  u_i .
\end{equation*}	
Multiplying this equation by $- 1/(A B t^{\ca-2}g^{\frac{\cb}{A}+1} f (1+f)^{1-\cc} )$ and using \eqref{e:f0frl} to substitute $f_0$, we get
\begin{equation*}
	-  \frac{ t^{2-\ca} (1+f)^{\cc-1}  }{	A  B g^{\frac{\cb}{A}+1}    f  }	 \del{t} u_i  +\frac{  \cm t^{2-2 \ca} (1+f)^{2\cc-2} }{	A  B^2 g^{\frac{2 \cb}{A}+1}   f   }   \del{i}u_0 = \frac{  t^{2-2\ca} (1+f)^{2\cc-2} }{	A  B^2 g^{\frac{2\cb}{A}+1}     f  } u_i  .
\end{equation*}	
Applying the definition \eqref{e:Gdef} of $\chi$, this equation simplifies to
\begin{equation*}
	-  \frac{t^{2-\ca} (1+f)^{\cc-1} }{	A  B g^{\frac{\cb}{A}+1}    f   }	\del{t} u_i   +\frac{  \cm \chi }{	A  B   g    } \del{i}u_0 = \frac{ \chi }{	A  B g   } u_i .
\end{equation*}	
Then, using \eqref{e:ttf2}, we transform to the independent variable $\tau$ via the substitution $t=g^{-1}(-\tau)$. Expressing the equation in terms of the variables defined in \eqref{e:udvar}, applying the definition \eqref{e:Gdef} of $\chi$, and multiplying both sides by $\delta^{ij}$, we arrive at
\begin{equation}\label{e:fuc2}
  \delta^{ij}\del{\tau} \uui  +\frac{\cm \underline{\chi}  }{AB\tau}\delta^{ij}  \del{i} \uuo= \frac{1}{\tau}\frac{ \underline{\chi}      }{	A  B    }	\delta^{ij}   	\uui.
\end{equation}

\underline{$(3)$ Rewrite \eqref{e:weq3}:}		
Inserting \eqref{e:var} into \eqref{e:weq3}, a straightforward calculation gives
\begin{align*}
	\partial_{t}u =   \frac{f_0}{f}   u_0- \frac{f_0}{f}   u .
\end{align*}
Multiplying this equation by $- 1/(A B t^{\ca-2}g^{\frac{\cb}{A}+1} f (1+f)^{1-\cc} )$ and using the definition \eqref{e:Gdef} of $\chi$ to substitute the corresponding terms, we obtain
\begin{equation}\label{e:s3stp1}
	-\frac{t^{2-\ca}(1+f)^{\cc-1}}{A  B g^{\frac{\cb}{A}+1}    f   } \del{t}u =\frac{1}{g}\Bigl(- \frac{1 }{A  B } \frac{f+1}{f} \chi u_0+\frac{1}{A  B }  \frac{f+1}{f} \chi    u \Bigr).
\end{equation}
Then, according to the definition \eqref{e:xidef} of $\xi$, we obtain
\begin{equation}\label{e:invfg}
	\frac{1}{fg}
	=\xi\Bigl(1+\frac{1}{f}\Bigr).
\end{equation}
With the help of \eqref{e:invfg} and Proposition \ref{t:limG}, substituting into \eqref{e:s3stp1} yields
\begin{equation*}
	-\frac{ t^{2-\ca} (1+f)^{\cc-1} }{A  B g^{\frac{\cb}{A}+1}   f   } \del{t}u=\frac{1}{A g}\Bigl(  \Bigl(- \frac{2\cb  }{3-2\cc}-\frac{1 }{ B }\mathfrak{G}(t)\Bigr)   u_0 +  \Bigl( \frac{2\cb  }{3-2\cc}+\frac{1}{  B }\mathfrak{G}(t)\Bigr)    u\Bigr) + \mathfrak{K}(t,u_0,u)
\end{equation*}
where
\begin{equation*}
	\mathfrak{K}(t,u_0,u):= - \frac{1 }{A  B } \chi \xi\Bigl(1+\frac{1}{f}\Bigr)u_0 + \frac{1}{A  B } \chi \xi\Bigl(1+\frac{1}{f}\Bigr) u .
\end{equation*}
Then, using \eqref{e:ttf2}, we transform to the independent variable $\tau$ via the substitution $t=g^{-1}(-\tau)$. Expressing the equation in terms of the variables defined in \eqref{e:udvar}, we arrive at
\begin{equation}\label{e:fuc3}
 \del{\tau}\uu  =\frac{1}{A \tau}\Bigl(  \Bigl(- \frac{2\cb  }{3-2\cc}-\frac{1 }{ B }\underline{\mathfrak{G}}(\tau)\Bigr)   \uuo +  \Bigl( \frac{2\cb  }{3-2\cc}+\frac{1}{  B }\underline{\mathfrak{G}}(\tau)\Bigr)    \uu \Bigr) - \underline{\mathfrak{K}}(\tau,\uuo,\uu)
\end{equation}
where
\begin{equation}\label{e:Rmk}
	\underline{\mathfrak{K}}(\tau,\uuo,\uu):= - \frac{1 }{A  B } \underline{\chi}\underline{\xi} \Bigl(1+  \frac{1}{ \underline{f}} \Bigr)  \uuo+ \frac{1}{A  B } \underline{\chi} \underline{\xi} \Bigl(1+  \frac{1}{ \underline{f}} \Bigr)  \uu .
\end{equation}
We can also verify, again by using Propositions \ref{t:limG} and \ref{t:fginv} for $\xi(t)$ and $\chi(t)$, that $\underline{\mathfrak{K}}\in C^0\bigl([-1,0], C^\infty(B_R(0)\times B_R(0))\bigr)$.

\subsection{Step $3$: Fuchsian formulations}
Gathering \eqref{e:fuc1}, \eqref{e:fuc2}, and \eqref{e:fuc3} together, we obtain the following system, 
\begin{equation}\label{e:fuc}
	\B^0\del{\tau}\U+\B^j\del{j}\U=\frac{1}{\tau}\mathfrak{B}\Pbb \U+ \mathcal{H}
\end{equation}
where $\U:=(\uuo, \uui, \uu)^T$, $
\mathcal{H}:=\mathcal{H}(\tau,\uuo,\uu)= \p{-\underline{\mathfrak{L}}(\tau, \uu), 0, -\underline{\mathfrak{K}}(\tau,\uuo,\uu)}^T$, $\Pbb:= \mathds{1}  $,
The coefficient matrices are given by
\begin{gather*}
	\B^0:=\p{1 & 0 & 0 \\
		0 &  \delta^{ki} & 0 \\
		0 & 0 & 1 }, \quad
	\B^j:= \frac{\cm \underline{\chi}}{AB\tau} \p{0 & \delta^{ij}  & 0 \\
	 \delta^{kj}  & 0 & 0 \\
	0 & 0 & 0} ,  \\
\mathfrak{B}:= \frac{1}{A}   \p{\cb +   (2 \ck-\cc)  (\frac{2\cb   }{3-2\cc}+   \frac{\underline{\mathfrak{G}} }{  B } )  +\underline{\mathfrak{H}}   & 0 & -2  \cb+(\cc-\ck) ( \frac{2\cb   }{3-2\cc}   +   \frac{\underline{\mathfrak{G}} }{  B })   +	\underline{\mathfrak{F}}  \\
	0 &   (\frac{2\cb }{3-2\cc}+\frac{\underline{\mathfrak{G}}}{B})	\delta^{ki}   & 0 \\
	- (\frac{2\cb  }{3-2\cc}+\frac{  \underline{\mathfrak{G}}  }{ B }) & 0 & \frac{2\cb  }{3-2\cc}+\frac{\underline{\mathfrak{G}} }{  B } },
\end{gather*}
The functions $\underline{\mathfrak{G}} :=\underline{\mathfrak{G}}(\tau)$, $\underline{\mathfrak{F}}: =\underline{\mathfrak{F}}(\tau, \uu) $, $\underline{\mathfrak{H}}  :=\underline{\mathfrak{H}}(\tau,\uuo,\uu) $,  $\underline{\mathfrak{L}}:=\underline{\mathfrak{L}}(\tau, \uu)$ and $\underline{\mathfrak{K}}:=\underline{\mathfrak{K}}(\tau,\uuo,\uu)$ are defined by \eqref{e:Rmh}--\eqref{e:Rml} and \eqref{e:Rmk}.

\subsection{Step $4$: Verification of the Fuchsian system}
In this section (Step 4), let us first verify that \eqref{e:fuc} satisfies all the conditions (i.e., Conditions \eqref{c:2}--\eqref{c:7} in Appendix \ref{s:fuc}) required for a Fuchsian system of the form \eqref{e:model1} when $\tau>\tau_\delta$ is close to $0$. Once these conditions are verified, we can then apply Theorem \ref{t:fuc} to conclude that a solution exists for $\tau\in[\tau_\delta,0)$ (Step 5). 
Then, by choosing sufficiently small initial data and applying the continuation principle, we extend the local solution beyond $\tau_\delta$. Combining this with the previously obtained solution near $\tau=0$, we thus obtain a global solution (Step 6). 
Finally, by transforming the Fuchsian fields back to the original variables $\varrho$ and $\del{t}\varrho$ (Step 7), we conclude Theorem \ref{t:mainthm1}.

In \eqref{e:fuc}, corresponding to the system \eqref{e:model1}--\eqref{e:model2}, we have
$\mathbf{P}=\Pbb=\mathds{1}$, $ B^{0} =\B^0=\mathds{1}$, and $B_2^i(\tau)=\tau\B^i(\tau)$ is independent of $(x^i, \U)$, while $B_0^i=0$. Moreover, we have
\begin{align*}
	\tilde{\mathbf{B}} =\tilde{\mathfrak{B}}= &  \frac{1}{A}   \p{\cb +   (2 \ck-\cc)  (\frac{2\cb   }{3-2\cc}+   \frac{\underline{\mathfrak{G}} }{  B } )     & 0 & -2  \cb+(\cc-\ck) ( \frac{2\cb   }{3-2\cc}   +   \frac{\underline{\mathfrak{G}} }{  B })    \\
			0 &   (\frac{2\cb }{3-2\cc}+\frac{\underline{\mathfrak{G}}}{B})	\delta^{ki}   & 0 \\
			- (\frac{2\cb  }{3-2\cc}+\frac{  \underline{\mathfrak{G}}  }{ B }) & 0 & \frac{2\cb  }{3-2\cc}+\frac{\underline{\mathfrak{G}} }{  B } } \notag  \\
		 \in & C^0([T_0,0],  \mathbb M_{(n+2)\times (n+2)}), 
\end{align*}
such that
\begin{equation*}
	\mathbf{B}(\tau, \U)-\tilde{\mathbf{B}} (\tau) =\mathfrak{B}(\tau, \U) -\tilde{\mathfrak{B}}(\tau)= \mathrm{O}(\U)
\end{equation*}
for all $ (t, u) \in[T_{0},0] \times B_R(\Rbb^{n+2}) $. 
In this case, we find that the parameter $\mathtt{b}$ defined in Theorem \ref{t:fuc} satisfies $\mathtt{b}=0$ (since both $\tilde{\mathbf{B}}$ and $\tilde{B}^i_2$ depend only on $\tau$).
Using this setting, there exists a constant $R>0$ (shrinking it if necessary), such that it follows immediately that conditions \eqref{c:2}--\eqref{c:4} and \eqref{c:6} are satisfied, given that $\mathbf{P}^\perp=\Pbb^\perp=0$ and $B^i_2=\tilde{B}^i_2=\tau\B^i(\tau)$. 
Since $B^0\equiv \mathds{1}$ is independent of $u$, we have $\Theta(s)\equiv B^0\equiv \mathds{1}$ and further $\Theta^\prime(0)\equiv 0$, which verifies \eqref{c:7}. Moreover, from this, we conclude that $\beta_{2\ell+1}=0$ for all $\ell=0,1,2,3$. 
Furthermore, we obtain $\gamma_1 \max\{\sum^3_{\ell=0} \beta_{2\ell+1}, \beta_1+2k(k+1)  \mathtt{b} \}=0$ which implies that we only need $\kappa>0$ to ensure that
$\kappa > (1/2) \gamma_1 \max \{\sum^3_{\ell=0} \beta_{2\ell+1}, \beta_1+2k(k+1)  \mathtt{b}  \}$.

In the following, we verify Condition \eqref{c:5}. To do so, we must show that there exist constants $\kappa$, $\gamma_{2}$, and $\gamma_{1}=1$ satisfying
\begin{equation*}
	 \mathds{1}= \B^{0}\leq \frac{1}{\kappa} \mathfrak{B} \leq\gamma_{2}\mathds{1}  \quad \text{i.e., }\quad \zeta^T \mathds{1} \zeta \leq \frac{1}{\kappa} \zeta^T\mathfrak{B}\zeta \leq\gamma_{2}\zeta^T\mathds{1} \zeta
\end{equation*}
for all $ (\tau,\U) \in[-1,0] \times B_R(\Rbb^{n+2}) $ and $\zeta \in \Rbb^{n+2}$. However, as we will see below, this condition cannot be fully satisfied. Nevertheless, we can still prove it within a much smaller region of $ (\tau,\U)$.

First, note that there exists a symmetric matrix $\mathds{B}$ defined by
\begin{align*}
\mathds{B}:= \frac{1}{A}   \p{\cb +   \frac{2\cb  (2 \ck-\cc)   }{3-2\cc}+   \frac{ (2 \ck-\cc) \underline{\mathfrak{G}} }{  B }    +\underline{\mathfrak{H}}   & 0 & - \cb+   \frac{ (\cc-\ck-1) \cb   }{3-2\cc}   +  \frac{(\cc-\ck-1)}{2} \frac{\underline{\mathfrak{G}} }{  B }    +	\frac{1}{2} \underline{\mathfrak{F}}  \\
	0 &   (\frac{2\cb }{3-2\cc}+\frac{\underline{\mathfrak{G}}}{B})	\delta^{ki}   & 0 \\
 - \cb+   \frac{ (\cc-\ck-1) \cb   }{3-2\cc}   +  \frac{(\cc-\ck-1)}{2} \frac{\underline{\mathfrak{G}} }{  B }    +	\frac{1}{2} \underline{\mathfrak{F}}   & 0 & \frac{2\cb  }{3-2\cc}+\frac{\underline{\mathfrak{G}} }{  B } },	
\end{align*}
such that
\begin{equation*}
 \zeta^T\mathfrak{B}\zeta= \zeta^T\mathds{B}\zeta .
\end{equation*}

Since $\mathds{B}$ is a real symmetric matrix, there exists an orthogonal matrix $\Omega$ such that
\begin{align*}
 \mathds{B}
 = \Omega^T \Lambda \Omega    \quad \text{where} \quad \Lambda:= \frac{1}{A} \p{\lambda_1 & 0 & 0 \\
	0 & \lambda_2 \delta^{ij}  & 0 \\
	0 & 0 & \lambda_3}. 
\end{align*}
The eigenvalues are given by
\begin{gather*}
	\lambda_1:= \tilde{ \lambda}_1+\mathfrak{Z}_1 (\underline{\mathfrak{G}} ,\underline{\mathfrak{H}},\underline{\mathfrak{F}})= \frac{\cb (4 c-4 \ck- 5 - \widetilde{\triangle} )}{2 (2 \cc-3)}+\mathfrak{Z}_1(\underline{\mathfrak{G}} ,\underline{\mathfrak{H}},\underline{\mathfrak{F}}),\notag \\
	\lambda_2:= \tilde{ \lambda}_2+\mathfrak{Z}_2 (\underline{\mathfrak{G}} , \underline{\mathfrak{H}},\underline{\mathfrak{F}})=   \frac{b (4 c-4 k-5+\widetilde{\triangle})}{2 (2 c-3)}+\mathfrak{Z}_2(\underline{\mathfrak{G}} ,\underline{\mathfrak{H}},\underline{\mathfrak{F}}),  \\
	\lambda_3 :=	\tilde{ \lambda}_3+\frac{\underline{\mathfrak{G}}}{B}=  \frac{2\cb }{3-2\cc}+\frac{\underline{\mathfrak{G}}}{B}     , \quad\text{where} \quad  	
	\widetilde{\triangle}:=  \sqrt{52 \cc^2-56 \cc \ck-104 \cc+20 \ck^2+40 \ck+65}. 
\end{gather*}
Here,  $\mathfrak{Z}_\ell:=\mathfrak{Z}_\ell(\underline{\mathfrak{G}} ,\underline{\mathfrak{H}},\underline{\mathfrak{F}})$ ($\ell=1,2$) are lengthy expressions that are analytic in all their variables and satisfy  $\mathfrak{Z}_\ell(0,0,0)=0$. Since $\underline{\mathfrak{G}}\in C^0([-1,0])$,   $\underline{\mathfrak{H}}\in C^0\bigl([-1,0], C^\infty(B_R(0)\times B_R(0))\bigr)$, and  $\underline{\mathfrak{F}} \in C^0\bigl([-1,0], C^\infty(B_R(0))\bigr)$, with $\underline{\mathfrak{H}}(\tau,0,0)=0$ and $\underline{\mathfrak{F}}(\tau,0)=0$, by applying Lemma \ref{t:gmap} and Proposition \ref{t:limG}\footnote{Recall that $\lim_{t\rightarrow t_m}\mathfrak{G}(t)=0$, and by the continuity of $g$ and $g^{-1}$, we have  $\lim_{\tau\rightarrow 0-} g^{-1}(-\tau)=g^{-1}(0)=t_m$. }, we obtain
\begin{equation}\label{e:lim1}
	\lim_{\tau \rightarrow 0-}\underline{\mathfrak{G}}(\tau)=\lim_{\tau \rightarrow 0-}\mathfrak{G}(g^{-1}(-\tau))=0,
\end{equation} 
Thus, defining $\widetilde{\mathfrak{Z}}_\ell(\tau,\uuo,\uu):=\mathfrak{Z}_\ell(\underline{\mathfrak{G}}(\tau) ,\underline{\mathfrak{H}}(\tau,\uuo,\uu),\underline{\mathfrak{F}}(\tau,\uu))$, we conclude that $\widetilde{\mathfrak{Z}}_\ell$ is continuous in $(\tau,\uuo,\uu)\in[-1,0)\times B_R(0)\times B_R(0) $ and satisfies
\begin{equation}\label{e:lim0}
	\lim_{(\tau,\uuo,\uu)\rightarrow (0,0,0)}\widetilde{\mathfrak{Z}}_\ell(\tau,\uuo,\uu)=0.
\end{equation}
Since $\cb>0$,  $1<\cc < 3/2$ and $3 c-\sqrt{2} \sqrt{8 \cc-5}<\ck<3 c+\sqrt{2} \sqrt{8 \cc-5} $, we have
\begin{equation*}
	\tilde{\lambda}_{1,2}=\frac{\cb (4 c-4 \ck- 5 \pm \widetilde{\triangle} )}{2 (2 \cc-3)}> 0 \AND
	\tilde{\lambda}_3=\frac{2\cb }{3-2\cc}> 0 .
\end{equation*}
Therefore, by   \eqref{e:lim1} and \eqref{e:lim0},  if we take $\varepsilon$ satisfying
\begin{equation*}
	0<\varepsilon< \min \bigl\{\tilde{\lambda}_m \;|\; m=1,2,3\bigr\},
\end{equation*}
then there exist small constants $\tau_\delta\in(-1,0)$ and $\tilde{R}\in(0,R)$ such that for $\tau \in (\tau_\delta,0)$ and $|(\uuo,\uu)|<\tilde{R}$, we have $|\mathfrak{Z}_\ell|<\varepsilon$ and $ |\underline{\mathfrak{G}}/B |<\varepsilon$.  Furthermore, let 
\begin{equation*}
	\Gamma:=\{(\tau,\uuo,\uu)\;|\;\tau \in (\tau_\delta,0) ,\; |(\uuo,\uu)|<\tilde{R}\},
\end{equation*}
Then, if $(\tau,\uuo,\uu)\in \Gamma$, we obtain $0<\tilde{\lambda}_m-\varepsilon<\lambda_m<\tilde{\lambda}_m+\varepsilon $ ($m=1,2,3$). In this case, if we take
\begin{equation*}
	0<\kappa \leq  \frac{1}{A}\min_{  m=1,2,3}\{\tilde{\lambda}_m - \varepsilon \} \AND  \gamma_2 \geq  \frac{1}{\kappa A}\max_{  m=1,2,3}\{\tilde{\lambda}_m +\varepsilon\} ,
\end{equation*}
then we obtain
\begin{align*}
(\Omega\zeta)^T \mathds{1} \Omega\zeta \leq \frac{1}{\kappa} (\Omega\zeta)^T  \Lambda \Omega  \zeta \leq\gamma_{2}(\Omega\zeta)^T\mathds{1} \Omega\zeta, \quad \text{i.e.,}\quad \zeta^T \mathds{1} \zeta \leq \frac{1}{\kappa} \zeta^T\mathfrak{B}\zeta \leq\gamma_{2}\zeta^T\mathds{1} \zeta
\end{align*}
for any $\tau \in (\tau_\delta,0)$, $|(\uuo,\uu)|<\tilde{R}$, and $\zeta\in \Rbb^{n+2}$. Therefore, we have verified that the system \eqref{e:fuc} is a Fuchsian system given by \eqref{e:model1} in Appendix \ref{s:fuc} for $(\tau,x^i) \in (\tau_\delta,0) \times \Tbb^n$ and $\U \in B_{\tilde{R}}(\Rbb^N)$.

\subsection{Step $5$: The  existence of the solution near $\tau=0$.}
After verifying that \eqref{e:fuc} is a Fuchsian system for $(\tau,x^i) \in (\tau_\delta,0) \times \Tbb^n$ and $\U \in B_{\tilde{R}}(\Rbb^N)$, we can directly apply Theorem \ref{t:fuc}. Specifically, there exist small constants $\sigma, \sigma_0\in(0,\tilde{R}/\kappa_s)$ with $\sigma<\sigma_0$ such that, if 
\begin{equation*}
	\|\U(\tau_\delta)\|_{H^s} \leq \sigma  \quad (\text{i.e. }\|\U(\tau_\delta)\|_{\Li} \leq \kappa_s \|\U(\tau_\delta)\|_{H^s} \leq \kappa_s \sigma<\tilde{R}) ,
\end{equation*}
then there exists a unique solution
\begin{equation}\label{e:sltdel}
	\U\in C^0([\tau_\delta,0),H^s(\Tbb^n) ) \cap C^1([\tau_\delta,0),H^{s-1}(\Tbb^n) ) \cap \Li ([\tau_\delta,0),H^k(\Tbb^n))
\end{equation}
of the initial value problem \eqref{e:model1}--\eqref{e:model2}. Moreover, for $\tau_\delta \leq \tau <0$, the solution $\U$ obeys the energy estimate
\begin{equation}\label{e:ineq1ab}
	\|\U(\tau)\|_{H^s}  \leq C(\sigma_0,\sigma_0^{-1}) \|\U(\tau_\delta)\|_{H^s} < C_1 \sigma
\end{equation}
where we take $C_1>\max\{C(\sigma_0,\sigma_0^{-1}),
1\}$. By shrinking $\sigma>0$, we are able to make sure $C_1 \sigma<1$ and $\|\U(\tau)\|_{\Li} \leq \kappa_s C_1 \sigma<\tilde{R}$.

\subsection{Step $6$:  The existence of the solution for $\tau \in[-1,0)$.}

Note that when $\tau\in[-1,\tau_\delta)$, the system \eqref{e:fuc} may not be a Fuchsian system because, as shown in Step $4$, Condition \eqref{c:5} may be violated. Consequently, we cannot use Theorem \ref{t:fuc} to prove the existence of $\U$ for $\tau\in[-1,0)$. To address this issue, we now prove the following lemma to establish the global solution for $\tau\in[-1,0)$.

\begin{lemma}\label{t:whU}
	Let $\sigma>0$ be a constant, and let $C_1$ be the constant given in Step 5. Let $\kappa_s$ denote the Sobolev constant from Theorem \ref{t:locext}. Then, there exists a small constant $\sigma_\star \in(0,1)$ such that, if the initial data satisfies $\|\U(-1)\|_{H^s} \leq (1/2) \sigma_\star\sigma$, then
	there exists a classical solution $\U\in C^1 ([-1,0) \times \Tbb^n)$ to the system \eqref{e:fuc},
	and 
	\begin{equation*}
		\U\in C^0([-1,0), H^s) \cap C^1([-1,0), H^{s-1}).
	\end{equation*}
Moreover, there exists a constant $C>1$ such that, for $\tau\in[-1,0)$,
	\begin{equation}\label{e:uest2}
	\|\U(\tau)\|_{W^{1,\infty}} \leq 2\kappa_s
	\|\U(\tau)\|_{H^s}  \leq C  \sigma.
	\end{equation}
\end{lemma}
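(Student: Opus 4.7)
The plan is to complement the near-singularity existence obtained in Step $5$ (which requires small data at $\tau=\tau_\delta$) by a standard quasilinear symmetric hyperbolic argument on the compact sub-interval $[-1,\tau_\delta]$, and then concatenate the two pieces at $\tau=\tau_\delta$.

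On the slab $[-1,\tau_\delta]\times\Tbb^n$ the system \eqref{e:fuc} is no longer Fuchsian, but it is a classical symmetric hyperbolic system: $\B^0\equiv\mathds{1}$, the spatial matrices $\B^i$ are symmetric, and all their entries, together with the coefficients of $\mathfrak{B}$ and of the source $\mathcal{H}$, are continuous in $\tau$ on the closed interval $[-1,\tau_\delta]$ (because $\underline{\chi},\underline{\xi}$ and $\underline{\mathfrak{G}}$ are bounded and continuous by Propositions \ref{t:limG} and \ref{t:fginv}, while $1/\tau$ is regular away from $0$) and smooth in $\U$ on $B_{\tilde R}(\Rbb^{n+2})$. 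Standard local existence and continuation theory for quasilinear symmetric hyperbolic systems on $\Tbb^n$ therefore delivers a unique $H^s$ solution starting from the data $\U(-1)$, defined on some maximal sub-interval $[-1,\tau_*)$ with $\tau_*\leq \tau_\delta$.

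To push $\tau_*$ all the way to $\tau_\delta$, I will run the usual Moser/Gr\"onwall $H^s$ energy estimate. Since $[-1,\tau_\delta]$ is bounded away from the singular time $\tau=0$, the coefficient norms are uniformly bounded on $[-1,\tau_\delta]$ as long as $\|\U(\tau)\|_{L^\infty}<\tilde R$, and the differentiated energy identity takes the schematic form
\begin{equation*}
	\tfrac{d}{d\tau}\|\U(\tau)\|_{H^s}^2 \leq K\,\|\U(\tau)\|_{H^s}^2,
\end{equation*}
with a constant $K=K(\tau_\delta,\tilde R,s)$ independent of the data. Gr\"onwall then yields $\|\U(\tau)\|_{H^s}\leq e^{K(1+\tau_\delta)/2}\|\U(-1)\|_{H^s}$ on $[-1,\tau_*)$. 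I will fix
\begin{equation*}
	\sigma_\star := \min\bigl\{1,\; 2\,e^{-K(1+\tau_\delta)/2}\bigr\},
\end{equation*}
so that the hypothesis $\|\U(-1)\|_{H^s}\leq \tfrac12\sigma_\star\sigma$ forces $\|\U(\tau)\|_{H^s}\leq\sigma$ throughout $[-1,\tau_*)$. Combined with Sobolev embedding $\|\U\|_{L^\infty}\leq \kappa_s\|\U\|_{H^s}$ and the relation $\sigma<\tilde R/\kappa_s$ from Step $5$, a one-step bootstrap ensures $\|\U(\tau)\|_{L^\infty}<\tilde R$ on $[-1,\tau_*)$, so the continuation principle forbids $\tau_*<\tau_\delta$ and the solution extends up to $\tau_\delta$ with $\|\U(\tau_\delta)\|_{H^s}\leq \sigma$.

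With this data at $\tau=\tau_\delta$ satisfying the smallness hypothesis of Step $5$, that step supplies a unique extension $\U\in C^0([\tau_\delta,0),H^s)\cap C^1([\tau_\delta,0),H^{s-1})$ with $\|\U(\tau)\|_{H^s}\leq C_1\sigma$ on $[\tau_\delta,0)$. Gluing this to the symmetric hyperbolic solution on $[-1,\tau_\delta]$ at the common time $\tau_\delta$ produces a single solution on $[-1,0)$ in the claimed regularity classes. The pointwise bound \eqref{e:uest2} follows from Sobolev embedding after absorbing the constants $e^{K(1+\tau_\delta)/2}$ and $C_1$ into a single $C$, and the classical $C^1$-regularity in $(\tau,x)$ on $[-1,0)\times\Tbb^n$ is automatic because $s\geq n/2+3$ embeds $H^{s-1}(\Tbb^n)$ into $C^1(\Tbb^n)$. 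The main subtlety is precisely the bootstrap in the third step: the Gr\"onwall constant $K$ is only uniform on $B_{\tilde R}$, so the smallness parameter $\sigma_\star$ must be calibrated so that the a priori $H^s$ bound, transported through Sobolev embedding, stays strictly below $\tilde R$, closing the continuation argument in one pass without circularity.
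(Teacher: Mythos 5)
Your proposal is correct and follows essentially the same two-piece strategy as the paper: classical symmetric-hyperbolic local existence plus an $H^s$ energy estimate on the compact interval $[-1,\tau_\delta]$ (where the $1/\tau$ weights are harmless) to propagate smallness to $\tau_\delta$, followed by gluing with the Fuchsian existence result of Step~5 on $[\tau_\delta,0)$. The only cosmetic difference is that the paper invokes its pre-packaged weighted estimate from Theorem~\ref{t:locest}, $\|\U(\tau)\|_{H^s}\leq \|\U(-1)\|_{H^s}(-\tau)^{-c_1}e^{c_2(\tau+1)}$, and calibrates $\sigma_\star\leq e^{-c_2(R_1)(1+\tau_\delta)}(-\tau_\delta)^{c_1(R_1)}$, whereas you absorb the $(-\tau)^{-c_1}$ factor into a plain Gr\"onwall constant $K$ since $-\tau\geq -\tau_\delta>0$ on the slab; the paper also runs a short dichotomy (either $\|\U\|_{H^s}<\sigma$ persists to $\tau^\star=0$, or a first threshold time $\tau_\star$ exists) which your bootstrap on $[-1,\tau_\delta]$ handles implicitly. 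One small imprecision worth noting: the Moser commutator estimates make the Gr\"onwall constant depend on $\|\U\|_{W^{1,\infty}}$, not merely $\|\U\|_{L^\infty}\leq\tilde R$; this is fine because your bootstrap keeps $\|\U\|_{H^s}\leq\sigma$, hence $\|\U\|_{W^{1,\infty}}\leq 2\kappa_s\sigma$, but the dependence of $K$ should be stated in terms of that $W^{1,\infty}$ bound (matching the paper's $c_\ell(\|\U\|_{L^\infty W^{1,\infty}})$ and its choice $R_1=\max\{\sigma,2\kappa_s\sigma\}$).
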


\begin{proof}
Firstly, according to the assumption of this lemma, we note that the initial data satisfies  $\|\U(-1)\|_{H^s} < \sigma /2 $. Then, by applying Theorem \ref{t:locext} (with an appropriate shift of the time coordinate), we obtain that for the constant $R_1:=\max\{\sigma,2\kappa_s \sigma\}>\kappa_s  \sigma/2 > \kappa_s\|\U(-1)\|_{H^s}$, there exists a  $\tau^\star \in(-1,0]$, which we assume to be the maximal time, such that there exists a classical solution  $\U\in C^1([-1,\tau^\star) \times \Tbb^n)$ to the system \eqref{e:fuc} satisfying the bound
\begin{equation*}\label{e:supu}
	\sup_{(\tau,x^i)\in[-1,\tau^\star)\times \Tbb^n} |\U(\tau,x^i)| \leq R_1,
\end{equation*}
and
\begin{equation*}
	\U\in C^0([-1,\tau^\star), H^s) \cap C^1([-1,\tau^\star), H^{s-1}).
\end{equation*}

We claim that either  $(1)$  $\|\U(\tau)\|_{H^s}<\sigma$ for all $\tau\in[-1,\tau^\star)$ or $(2)$ there exists a first time $\tau_\star\in[-1,\tau^\star)$ such that $\|\U(\tau_\star)\|_{H^s}=\sigma$. If the first case holds, we set  $\tau_\star=\tau^\star$. , then, by Sobolev’s inequality ($\|\U(\tau)\|_{W^{1,\infty}}\leq 2\kappa_s\|\U(\tau)\|_{H^s}$), we obtain that for $\tau\in[-1,\tau_\star)$,  
\begin{equation}\label{e:bdu}
	\max\{\|\U(\tau)\|_{W^{1,\infty}},\|\U(\tau)\|_{H^s}\}< R_1 .
\end{equation}

For the first case $(1)$, we claim that $\tau^\star=0$. Otherwise, suppose that  $\tau^\star<0$. Since $\|\U(\tau)\|_{W^{1,\infty}} \leq 2\kappa_s
\|\U(\tau)\|_{H^s} < R_1<\infty$ for all $\tau\in[-1,\tau^\star)$, it follows from the continuation principle (see \cite[Theorem $2.2$]{Majda2012}) that $\U$ can be extended beyond $\tau^\star$ to a larger interval $[-1,\tau^\prime)$ with $\tau^\prime>\tau^\star$. This contradicts the maximality of $\tau^\star$. Thus, we conclude that $\tau^\star=0$. In this case, the lemma is proved.

Let us focus on the second case $(2)$ for the rest of the proof.
Since \eqref{e:fuc} is a special symmetric hyperbolic system satisfying conditions given by Theorem \ref{t:locest} in Appendix \ref{s:fuc}, we can apply this theorem to conclude that the solution  $\U\in C^1_b([-1,\tau_\star] \times \Tbb^n)$ to \eqref{e:fuc} satisfies the energy estimate
\begin{equation}\label{e:uest1}
	\|\U(\tau)\|_{H^s} \leq   \|\U(-1)\|_{H^s} (-\tau)^{-c_1} e^{c_2 (\tau+1)}
\end{equation}
for $\tau\in [-1,\tau_\star]$, where $c_1:=c_1(\|\U \|_{\Li([-1,\tau_\star],W^{1,\infty})})$ and  $c_2:=c_2(\|\U \|_{\Li([-1,\tau_\star],L^\infty)})$  are two finite positive constants  due to the bound \eqref{e:bdu}. Take $\sigma^\star$ satisfying\footnote{Note that $\tau_\delta\in(-1,0)$ is close to $0$. Hence $\sigma_\star$ is also small. }
\begin{equation*}
	0<\sigma_\star\leq e^{-c_2(R_1)(1+\tau_\delta)}(-\tau_\delta)^{c_1(R_1)}
\end{equation*}
where $\tau_\delta$ is defined in Step $4$ and $5$.
Then if we further let the initial data satisfy $\|\U(-1)\|_{H^s} \leq (1/2) \sigma_\star\sigma$, we  \textit{claim}: 
\begin{equation*}
	\tau_\star  > \tau_\delta. 
\end{equation*}
We prove this by contradiction. Suppose that  $\tau_\star  \leq \tau_\delta$, then for $-1<\tau<\tau_\star  \leq \tau_\delta<0$, by the Sobolev embedding theorem, the estimate  \eqref{e:uest1} for $\tau\in[-1,\tau_\star ]$ and noting that  $c_\ell=c_\ell(\|\U \|_{\Li([-1,\tau_\star],W^{1,\infty})})\leq c_\ell(R_1)$ ($\ell=1,2$), i.e., $(-\tau_\delta)^{c_1(R_1)} \leq (-\tau_\delta)^{c_1}\leq (-\tau)^{c_1}$ and $e^{-c_2(R_1)(1+\tau_\delta)}\leq e^{-c_2 (1+\tau_\delta)}\leq e^{-c_2 (1+\tau)}$, we deduce
\begin{equation*}
	 \|\U(\tau)\|_{H^s} \leq   \frac{1}{2} \sigma_\star\sigma (-\tau)^{-c_1} e^{c_2 (1+\tau)}  \leq   \frac{1}{2}  e^{-c_2(R_1)(1+\tau_\delta)}(-\tau_\delta)^{c_1(R_1)} \sigma  (-\tau)^{-c_1} e^{c_2 (1+\tau)} \leq  \frac{1}{2}\sigma
\end{equation*}
for $\tau\in [-1,\tau_\star  ]$. Then, we get $ \|\U(\tau_\star)\|_{H^s} \leq \sigma/2\neq  \sigma $.
This result contradicts   the definition of $\tau_\star$. Hence,  $\tau_\star > \tau_\delta$. From this, we conclude that $\|\U(\tau_\delta)\|_{H^s} <\sigma$ by the definition of $\tau_\star$.  Combining this with the existence interval  from  Step $5$, where there is a solution $\U$ for $\tau\in[\tau_\delta,0)$ satisfying \eqref{e:sltdel} and \eqref{e:ineq1ab}, we conclude that there exists a classical solution $\U\in C^1_b([-1,0) \times \Tbb^n)$ to the system \eqref{e:fuc}
and
\begin{equation*}
	\U\in C^0([-1,0), H^s) \cap C^1([-1,0), H^{s-1}).
\end{equation*}
Moreover, there is a constant $C>1$, such that for $\tau\in[-1,0)$,
\begin{equation*}
	\|\U(\tau)\|_{W^{1,\infty}} \leq 2\kappa_s \|\U(\tau)\|_{H^s}  \leq C  \sigma.
\end{equation*}
We have completed the proof.
\end{proof}

\subsection{Step $7$: Proof of Theorem \ref{t:mainthm1}}
\begin{proof}[Proof of Theorem \ref{t:mainthm1}]
	Using \eqref{e:ww}--\eqref{e:wi} and \eqref{e:u}--\eqref{e:ui}, we derive the following transformations
	\begin{gather}
		\varrho(t,x^i)=f(t)+f(t)u(t,x^i), \quad 	\del{t}\varrho(t,x^i)=f_0(t)+f_0(t)u_0(t,x^i) \label{e:trf1} \intertext{and} \partial_{i}\varrho(t,x^i)= \frac{1+f(t)}{\cm} u_i(t,x^i). \label{e:trf2}
	\end{gather}

	Taking $\sigma,\; \sigma_\star>0$ as defined above, if the data satisfies \eqref{e:mtdata}, then   using the transformations \eqref{e:trf1}--\eqref{e:trf2}, along with \eqref{e:udvar} and  $\U=\p{\uuo,\uui,\uu}^T$, \eqref{e:mtdata} implies
	\begin{equation*}
		\|(u_0,u_i,u)|_{t=t_0}\|_{H^s}=\Bigl\|\frac{\mathring{\varrho}}{\mf}-1\Bigr\|_{H^s(\Tbb^n)}+  	\Bigl\|\frac{\mathring{\varrho}_0}{\mf_0}-1\Bigr\|_{H^s(\Tbb^n)}+  	\Bigl\|\frac{\cm  \mathring{\varrho}_i}{1+\mf}\Bigr\|_{H^s(\Tbb^n)} \leq \frac{1}{2} \sigma_\star\sigma,
	\end{equation*}
which means that \eqref{e:mtdata} is equivalent to  $\|\U(-1)\|_{H^s} \leq (1/2) \sigma_\star\sigma$.
	By Lemma \ref{t:whU} in Step $6$, and using \eqref{e:uest2}, \eqref{e:udvar} and  $\U=\p{\uuo,\uui,\uu}^T$,  we obtain for $t\in[t_0,t_m)$,
	\begin{equation}\label{e:uest3}
		\|(u_0,u_i,u)\|_{W^{1,\infty}} \leq 2\kappa_s\|(u_0,u_i,u)\|_{H^s}  \leq C  \sigma.
	\end{equation}

Then, by using the transformations \eqref{e:trf1}--\eqref{e:trf2}, under this initial data, the estimate \eqref{e:uest3} becomes: 
\begin{align*}
 	&\Bigl\|\frac{\varrho}{f}-1\Bigr\|_{W^{1,\infty}(\Tbb^n)}+  	\Bigl\|\frac{\del{t}\varrho}{f_0}-1\Bigr\|_{W^{1,\infty}(\Tbb^n)}+ 	\Bigl\|\frac{\cm \del{i}\varrho}{1+f}\Bigr\|_{W^{1,\infty}(\Tbb^n)} \notag  \\
 	& \hspace{1cm} \leq 2\kappa_s\biggl( \Bigl\|\frac{\varrho}{f}-1\Bigr\|_{H^s(\Tbb^n)}+  	\Bigl\|\frac{\del{t}\varrho}{f_0}-1\Bigr\|_{H^s(\Tbb^n)}+ 	\Bigl\|\frac{\cm \del{i}\varrho}{1+f}\Bigr\|_{H^s(\Tbb^n)} \biggr) \leq   C \sigma
\end{align*}
for $t\in[t_0,t_m)$.
This further implies
\begin{equation*}
	\|\varrho-f\|_{L^\infty} \leq C\sigma f, \quad \|\del{t}\varrho-f_0\|_{L^\infty} \leq C\sigma f_0 \AND \|\del{i} \varrho\|_{\Li} \leq C\sigma f.
\end{equation*}
As mentioned earlier, by shrinking $\sigma$, we ensure that $C\sigma < 1$, so we have
\begin{equation*}
	0<(1- C \sigma) f \leq \varrho\leq (1+ C \sigma) f  \AND	(1- C \sigma) f_0 \leq \del{t}\varrho \leq (1+ C \sigma) f_0 .
\end{equation*}
Taking the limit as $t \to t_m$ and using \eqref{e:limf} from Theorem \ref{t:mainthm0}, we obtain
\begin{equation*}
	\lim_{t\rightarrow t_m} \varrho(t,x^i) =+\infty \AND \lim_{t\rightarrow t_m} \del{t}\varrho (t,x^i)  =+\infty.
\end{equation*}
Thus, we complete the proof of this theorem. 
\end{proof}


\appendix

\section{Preliminaries on ordinary differential equations} \label{s:AppODE}

In this section, we review some fundamental theorems on the existence, uniqueness, and continuation of solutions to ordinary differential equations (ODEs), without providing proofs, which can be found in various references on ODEs (e.g., see \cite{Hoermander1997, Hsu2013}).   Let $f$ be a continuous function defined in a neighborhood of $(t_0,y_0) \subset \Rbb\times \Rbb^n$, with values in $\Rbb^n$. We focus on
the following initial value problem
\begin{align}\label{e:ode}
	\frac{dy(t)}{dt}= f(t,y(t)); \quad
	y(t_0)= y_0 
\end{align}
for $t$ in a neighborhood of $t_0$. For given constants $a>0$, $b>0$ and $M>0$, let
\begin{align*}
	D:=\{(t,y)\in \Rbb\times \Rbb^n\;|\; |t-t_0|\leq a, |y-y_0|\leq b\}.
\end{align*}
We assume that  $f$ is defined on $D$ and that  there exists a constant $M>0$ such that
\begin{align}\label{e:bdcond}
	|f(t,y)|\leq M, \quad \text{for }  (t,y) \in D.
\end{align}

\begin{theorem}[Existences and uniqueness of ODEs]\label{t:exunqthm}
Assume \eqref{e:bdcond} and the Lipschitz condition
\begin{equation*}
	|f(t,y)-f(t,z)| \leq C|y-z|, \quad \text{if} \quad |t-t_0| \leq a, |y-y_0| \leq b, |z-y_0| \leq b.
\end{equation*}
Then, there exists a unique $C^1$ solution to the initial value problem \eqref{e:ode} for $|t-t_0|\leq T$ if $T\leq \min\{a,b/M\}$.
\end{theorem}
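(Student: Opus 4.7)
The plan is to recast the initial value problem \eqref{e:ode} as a fixed point equation for the Picard integral operator and apply the Banach contraction principle. First I would observe that a continuous $y: I_T \to \Rbb^n$ (with $I_T := [t_0 - T, t_0 + T]$) solves \eqref{e:ode} with $y \in C^1$ if and only if it satisfies the integral equation
\begin{equation*}
  y(t) = y_0 + \int_{t_0}^{t} f(s, y(s))\, ds,
\end{equation*}
since continuity of $s \mapsto f(s,y(s))$ promotes such a $y$ to $C^1$ automatically via the fundamental theorem of calculus, while the converse is immediate.

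Next I would introduce the complete metric space
\begin{equation*}
  X := \bigl\{ y \in C(I_T, \Rbb^n) \;:\; \sup_{t \in I_T} |y(t) - y_0| \leq b \bigr\}
\end{equation*}
with the sup-norm, and define the Picard operator $(\mathcal{T} y)(t) := y_0 + \int_{t_0}^{t} f(s, y(s))\, ds$. The hypothesis \eqref{e:bdcond} together with the standing choice $T \leq b/M$ gives $|(\mathcal{T} y)(t) - y_0| \leq M|t - t_0| \leq M T \leq b$, hence $\mathcal{T}(X) \subseteq X$, so the iteration is well defined. Uniqueness is then simply the uniqueness clause of the fixed point theorem applied to $\mathcal{T}$ on $X$.

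The main technical obstacle is that the naive Lipschitz estimate only gives contraction constant $CT$ in the sup-norm, which need not be less than $1$ under the stated hypothesis $T \leq \min\{a, b/M\}$. I would resolve this in either of two standard ways. Option one: replace the sup-norm by the equivalent weighted norm $\|y\|_{\star} := \sup_{t \in I_T} e^{-2C|t - t_0|} |y(t)|$; the Lipschitz bound then yields
\begin{equation*}
  e^{-2C|t - t_0|} |(\mathcal{T} y)(t) - (\mathcal{T} z)(t)| \leq e^{-2C|t-t_0|} \int_{t_0}^{t} C e^{2C|s-t_0|}\, ds \cdot \|y - z\|_\star \leq \tfrac{1}{2}\|y - z\|_\star,
\end{equation*}
so $\mathcal{T}$ is a strict contraction on $(X, \|\cdot\|_\star)$ regardless of the size of $CT$. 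Option two: prove by induction the sharpened estimate $|(\mathcal{T}^n y)(t) - (\mathcal{T}^n z)(t)| \leq \frac{(C|t - t_0|)^n}{n!}\|y - z\|_{\Li}$, whence some iterate $\mathcal{T}^{n_0}$ is a strict contraction in the ordinary sup-norm for $n_0$ large enough, and a standard corollary of the Banach theorem delivers a unique fixed point of $\mathcal{T}$ itself.

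Finally, applying the Banach fixed point theorem produces a unique $y \in X$ with $\mathcal{T} y = y$, and by the initial observation this $y$ is the unique $C^1$ solution of \eqref{e:ode} on $I_T$. (Uniqueness among all $C^1$ solutions, not just those a priori in $X$, follows from the bound $|y(t) - y_0| \leq M|t - t_0| \leq b$ enforced by \eqref{e:bdcond} on any solution that stays in $D$, together with the $T \leq b/M$ condition preventing escape from $D$ on $I_T$.)
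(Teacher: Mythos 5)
Your proof is the standard Picard--Lindel\"of argument: reformulate the IVP as a fixed point equation for the Picard integral operator on the closed ball $X\subset C(I_T,\Rbb^n)$, check invariance using $T\leq b/M$, and obtain a unique fixed point via the Banach contraction principle, resolving the possible failure of $CT<1$ by either the exponentially weighted norm or the iterated-contraction device. This is correct, and note that the paper does not supply its own proof of this statement --- Appendix~\ref{s:AppODE} explicitly quotes it ``without proofs'' with references to standard ODE texts --- so there is no in-paper argument to compare against; your parenthetical remark on uniqueness among all $C^1$ solutions (the bootstrap showing that $T\leq b/M$ forces any solution to remain in $D$, hence in $X$) is compressed but captures the right continuity argument.
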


\begin{theorem}[Continuation of solutions]\label{t:contthm1}
Let $f\in C(D)$ and satisfy \eqref{e:bdcond}. Suppose $\phi$ is a solution of \eqref{e:ode} on the interval $J=(a,b)$. Then

$(1)$ $\lim_{t\rightarrow a+} \phi(t)=\phi(a)$ and $\lim_{t\rightarrow b-} \phi(t)=\phi(b)$ both exist and finite.

$(2)$ if $(b,\phi(b)) \in D$, then the solution $\phi$ can be continued to the right passing through the point $t=b$.
\end{theorem}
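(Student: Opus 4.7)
The plan has two parts corresponding to the two claims, and in both I would work entirely from the integral form of the ODE together with the uniform bound $|f(t,y)|\leq M$ on $D$.

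For part $(1)$, I would first observe that since $(t,\phi(t))\in D$ along the orbit, one has $|\phi'(t)|\leq M$ on $(a,b)$, so for any $t_1,t_2\in(a,b)$,
\begin{equation*}
|\phi(t_1)-\phi(t_2)|=\Bigl|\int_{t_2}^{t_1} f(s,\phi(s))\,ds\Bigr|\leq M|t_1-t_2|.
\end{equation*}
Thus $\phi$ is Lipschitz on $(a,b)$, hence uniformly continuous. By the Cauchy criterion, the one-sided limits $\lim_{t\to a+}\phi(t)$ and $\lim_{t\to b-}\phi(t)$ both exist, and the Lipschitz bound $|\phi(t)-\phi(t_0)|\leq M(b-a)$ for a fixed $t_0\in(a,b)$ shows they are finite. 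I would then set $\phi(a)$ and $\phi(b)$ equal to these limits to obtain a continuous extension to $[a,b]$.

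For part $(2)$, with $\phi(b)$ defined as the left limit and $(b,\phi(b))\in D$, I would pass $t\to b-$ in the integral identity $\phi(t)=\phi(t_0)+\int_{t_0}^t f(s,\phi(s))\,ds$, using continuity of $f$ on $D$ and boundedness of $f$ to conclude $\phi(b)=\phi(t_0)+\int_{t_0}^b f(s,\phi(s))\,ds$. I would then apply a local existence theorem (Peano's, or Theorem \ref{t:exunqthm} under an additional Lipschitz hypothesis on $f$) at the new initial point $(b,\phi(b))$ to obtain a $C^1$ solution $\psi$ on some $[b,b+\delta)$ with $\psi(b)=\phi(b)$. The concatenation
\begin{equation*}
\tilde\phi(t):=\begin{cases}\phi(t),& t\in(a,b],\\ \psi(t),& t\in[b,b+\delta),\end{cases}
\end{equation*}
is then a candidate extension; one verifies it solves the integral equation across $t=b$, so is automatically $C^1$ with $\tilde\phi'(t)=f(t,\tilde\phi(t))$ throughout.

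The main obstacle I expect is the gluing at $t=b$: I must check that the left derivative $\lim_{t\to b-}\phi'(t)=\lim_{t\to b-}f(t,\phi(t))=f(b,\phi(b))$ matches $\psi'(b)=f(b,\phi(b))$, so that $\tilde\phi\in C^1((a,b+\delta))$. Both equalities follow from continuity of $f$ on $D$ together with the continuous extension of $\phi$ obtained in part $(1)$, so the argument is really just a careful bookkeeping of these limits rather than a new estimate. Everything else reduces to a Lipschitz/uniform-continuity argument and an invocation of a standard local existence theorem.
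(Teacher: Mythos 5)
The paper states Theorem \ref{t:contthm1} without proof, referring the reader to standard ODE references, so there is no in-paper argument to compare against. Your proof is the standard one and is essentially correct: boundedness of $f$ on $D$ gives a Lipschitz bound on $\phi$, uniform continuity then yields the one-sided limits via the Cauchy criterion, and passing to the limit in the integral form of the equation followed by a local existence theorem at $(b,\phi(b))$ and gluing gives the extension. Two small points are worth flagging. First, you correctly observe that Theorem \ref{t:exunqthm} needs a Lipschitz hypothesis not present in Theorem \ref{t:contthm1}; under mere continuity the local solvability step is Peano's theorem, and the continuation it produces need not be unique, which is consistent with the theorem's wording (``can be continued''). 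Second, since $D$ is a closed rectangle, if $(b,\phi(b))$ lies on the boundary face $t=t_0+a$ then $f$ is not defined to the right of $b$ and no rightward continuation is possible; the hypothesis $(b,\phi(b))\in D$ should be read as either an interior point or as permitting $f$ to be continuous on a slightly larger set, as is standard in textbook versions of this theorem.
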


\begin{corollary}[Continuation principle]\label{t:contthm2}
	Let $f\in C(D)$. Suppose $\phi$ is a solution of \eqref{e:ode} on the interval $J=(a,b)$, and if there is a finite constant $M>0$, such that for every $t\in (a,b)$,
	\begin{equation*}
		|f(t,\phi(t))|\leq M<+\infty,
	\end{equation*}
then the solution $\phi$ can be continued to the right passing through the point $t=b$.
\end{corollary}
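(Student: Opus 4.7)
The plan is to reduce this corollary to Theorem \ref{t:contthm1}.(2). That earlier theorem already furnishes the continuation mechanism: it guarantees that if the left limit $\phi(b):=\lim_{t\rightarrow b-}\phi(t)$ exists, is finite, and the resulting endpoint $(b,\phi(b))$ lies in $D$, then $\phi$ can be prolonged to the right past $t=b$. So the task is purely to extract these two facts from the single hypothesis $|f(t,\phi(t))|\leq M$.

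First I would establish that $\phi$ extends continuously to $t=b$. Since $f\in C(D)$ and $\phi\in C^1((a,b))$, the composition $f(\cdot,\phi(\cdot))$ is continuous on $(a,b)$, and the ODE in \eqref{e:ode} can be integrated: for any $s,t\in(a,b)$ with $s<t$,
\begin{equation*}
\phi(t)-\phi(s)=\int_s^t f(\tau,\phi(\tau))\,d\tau.
\end{equation*}
Applying the uniform bound $|f(\tau,\phi(\tau))|\leq M$ gives the Lipschitz estimate $|\phi(t)-\phi(s)|\leq M|t-s|$ on the whole interval $(a,b)$. Lipschitz continuity implies uniform continuity, so by the Cauchy criterion $\phi(b):=\lim_{t\rightarrow b-}\phi(t)$ exists and is finite.

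Next I would verify that $(b,\phi(b))\in D$, which is needed to invoke Theorem \ref{t:contthm1}.(2). By the assumption that $\phi$ is a solution on $(a,b)$, the graph $\{(t,\phi(t))\,:\,t\in(a,b)\}$ lies in $D$; since $D$ is closed (it is a closed box around $(t_0,y_0)$ in the definition preceding Theorem \ref{t:exunqthm}), its closure contains the limit point $(b,\phi(b))$, and hence this point lies in $D$. With $\phi$ now extended continuously to the closed interval $(a,b]$ and with $(b,\phi(b))\in D$, Theorem \ref{t:contthm1}.(2) directly applies and yields the desired continuation past $t=b$.

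The proof is essentially mechanical; the only (very minor) subtlety is the first step, where one must be careful to use the integral formulation rather than the mean value theorem applied to $\phi$ directly — the former uses only continuity of $f\circ\phi$ and the $L^\infty$ bound, while the latter would require knowledge of $\phi'$ up to the endpoint that we do not yet possess. Once this is observed, the Lipschitz estimate, the existence of the left limit, and the membership $(b,\phi(b))\in D$ follow in one short line each, and the corollary is complete.
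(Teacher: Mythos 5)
Your proof is correct. Note that the paper itself offers no proof of this corollary (Appendix \ref{s:AppODE} explicitly states these ODE facts are quoted without proof from the references), so there is nothing to compare against; your argument — integrate the ODE to get the Lipschitz bound $|\phi(t)-\phi(s)|\leq M|t-s|$, deduce via the Cauchy criterion that $\phi(b^-)$ exists and is finite, observe that the closedness of the box $D$ forces $(b,\phi(b))\in D$, and then invoke Theorem \ref{t:contthm1}.$(2)$ — is exactly the standard deduction the corollary is implicitly relying on. The only point worth a one-line remark is that Theorem \ref{t:contthm1} carries the blanket hypothesis \eqref{e:bdcond} (a bound on $f$ over all of $D$, not just along the solution), which the corollary drops; this is harmless here because $D$ is a compact box and $f\in C(D)$ is automatically bounded on it, but you should say so explicitly when you pass from your hypothesis to the theorem's.
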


\section{Tools of analysis}\label{s:cont}
\subsection{Calculus}
The following lemma and proposition generalize the well known result ``continuous functions on a compact set are bounded ''.
\begin{lemma}\label{t:cont}
	If $y=f(x)$ is continuous for $x\in[a,\infty)$ and $\lim_{x\rightarrow \infty} f(x) = A$ where $A<\infty$ is a finite constant, then $y=f(x)$ is bounded on $[a,\infty)$.
\end{lemma}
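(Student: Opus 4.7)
The plan is to combine two standard facts: the boundedness of continuous functions on compact intervals, and the definition of a limit at infinity. The proof will split $[a,\infty)$ into a compact piece where continuity gives a bound, and a tail where the limit condition gives a bound, then take the maximum.

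First I would use the hypothesis $\lim_{x\to\infty} f(x) = A$ with the choice $\varepsilon = 1$: there exists $N > a$ such that for every $x \geq N$, we have $|f(x) - A| < 1$, and hence
\begin{equation*}
|f(x)| \leq |A| + 1 \quad \text{for all } x \in [N, \infty).
\end{equation*}
This handles the tail of the domain.

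Next I would handle the compact piece $[a, N]$. Since $f$ is continuous on $[a,\infty)$, it is continuous on the closed bounded interval $[a,N]$, so by the extreme value theorem there exists a constant $M_1 > 0$ such that $|f(x)| \leq M_1$ for all $x \in [a, N]$. Setting
\begin{equation*}
M := \max\{M_1, |A| + 1\},
\end{equation*}
we obtain $|f(x)| \leq M$ for every $x \in [a, \infty)$, which is the desired boundedness.

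The argument is essentially routine, and the only minor care is to guarantee $N > a$ (which can always be arranged by enlarging $N$ if the limit definition returns a smaller threshold). There is no real obstacle here; the lemma is the standard bridge used in the main body — e.g., combined with limits such as $\lim_{t \to t_m} \mathfrak{G}(t) = 0$ in Proposition \ref{t:limG} and $\lim_{t \to t_m} \xi(t) = 0$ in Proposition \ref{t:fginv} — to upgrade pointwise limits at the endpoint into uniform bounds on $[t_0, t_m)$.
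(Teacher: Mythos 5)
Your proof is correct and complete; it is the standard split into a compact piece (extreme value theorem) plus a tail controlled by the $\varepsilon$-$N$ definition of the limit. The paper only sketches the proof, mentioning exactly this "definition of continuity/limit" route as one option (the other being a compactifying change of variables $x = 1/y-(1-a)$), so your argument matches the paper's first suggested approach and fills in the omitted details.
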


\begin{proof}
	The proof of Lemma \ref{t:cont} is straightforward, for example, by using the definition of continuity or by introducing a new variable $x=1/y-(1-a)$ for $y\in(0,1]$ and defining  $f(x(y))|_{y=0}:=A$ to transform this statement into one involving continuous functions on a compact set. We omit the detailed proof. 
\end{proof}

By combining Lemma \ref{t:cont} with the result that ``continuous functions on a compact set are bounded'', we obtain the following proposition.

\begin{proposition}\label{t:cont2}
	If $y=f(x)$ is continuous for $x\in[a,b)$, where the constant $b>a$ is finite or infinite, and $\lim_{x\rightarrow b-} f(x) = A$, where $A<\infty$ is a finite constant, then $y=f(x)$ is bounded on $[a,b)$.
\end{proposition}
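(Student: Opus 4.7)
The plan is to split the argument into two cases according to whether $b=\infty$ or $b<\infty$, and in each case reduce to a known boundedness result (either Lemma \ref{t:cont} or the classical theorem that a continuous function on a compact interval is bounded).

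First I would dispense with the case $b=\infty$: here the hypotheses of Proposition \ref{t:cont2} are exactly those of Lemma \ref{t:cont} (continuity on $[a,\infty)$ together with a finite limit at $\infty$), so boundedness on $[a,\infty)$ follows immediately from Lemma \ref{t:cont}.

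Next I would handle the case $b<\infty$. The idea is to extend $f$ continuously to the compact interval $[a,b]$. Define
\begin{equation*}
\tilde{f}(x):=
\begin{cases}
f(x), & x\in[a,b),\\
A, & x=b.
\end{cases}
\end{equation*}
Continuity of $\tilde{f}$ on $[a,b)$ is inherited from $f$, and continuity at $x=b$ is precisely the statement $\lim_{x\rightarrow b-}f(x)=A=\tilde{f}(b)$, which is the hypothesis. Therefore $\tilde{f}\in C([a,b])$, and the classical extreme value theorem applied to the compact set $[a,b]$ gives a finite constant $M>0$ with $|\tilde{f}(x)|\leq M$ for all $x\in[a,b]$. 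Restricting to $[a,b)$ yields $|f(x)|\leq M$ there, which is the desired boundedness.

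There is no real obstacle here; the only point requiring a line of care is verifying that the one-sided-limit hypothesis is exactly what is needed for $\tilde{f}$ to be continuous at the right endpoint, after which the standard compactness argument closes the proof. The combination of these two cases completes the argument.
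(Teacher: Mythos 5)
Your proof is correct and follows essentially the same route the paper sketches: the paper explicitly states that Proposition \ref{t:cont2} is obtained by combining Lemma \ref{t:cont} (for the case $b=\infty$) with the classical boundedness of continuous functions on compact sets (for the case $b<\infty$). You have simply filled in the details of the continuous-extension step, which the paper leaves implicit.
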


\subsection{The local existence for quasilinear symmetric hyperbolic systems}\label{s:locext}
This appendix contributes to the local existence theorem for quasilinear symmetric hyperbolic systems, without providing proofs. 
Detailed proofs can be found in, for example, \cite{Majda2012,Taylor2010,Racke2015}. The following theorem is given by \cite[\S$5$]{Racke2015}.

Let us consider the initial value problem,
\begin{gather}
	A^0(t,x,u)\del{t} u +A^j(t,x,u) \del{j} u+B(t,x,u) u= 0 ,  \label{e:hyp1}\\
	u|_{t=0}= u_0 \label{e:hyp2}
\end{gather}
where $u=(u_1,\cdots,u_N)\in \mathbb{C}^N$, $u=u(t,x)$, $t\in\Rbb$, $x\in \mathfrak{M}$ (where $\mathfrak{M}=\Rbb^n$ or $\Tbb^n$), and $A^\mu$, $B$ are complex $N\times N$-matrices and $C^\infty$-functions of their arguments $v\in \mathbb{C}^N$. The matrix $A^\mu $ is Hermitian, and $A^0 $ is positive definite, uniformly in each compact set with respect to $u$.

\begin{theorem}[Local existence theorem] \label{t:locext}
	Suppose $u_0\in H^s$, where $s\in \mathbb{Z}_{>n/2+1}$. Let $q_1:=\kappa_s \|u_0\|_{H^s}$, where $\kappa_s$ denotes the Sobolev constant\footnote{The Sobolev constant $\kappa_s$ characterizes the continuous embedding of $H^s$ into the space of uniformly bounded, continuous functions when $s>n/2$, i.e., $|u(x)|\leq \kappa_s\|u\|_{H^s}$ for $u\in H^s$. see details in \cite[\S$5$]{Racke2015}. } and $q_2>q_1$ is arbitrary but fixed. Then there exists a  $T>0$ such that there exists a classical solution $u\in C^1_b([0,T] \times \mathfrak{M})$ to the initial value problem \eqref{e:hyp1}--\eqref{e:hyp2} with the bound
	\begin{equation*}
		\sup_{(t,x)\in[0,T]\times \mathfrak{M}} |u(t,x)| \leq q_2,
	\end{equation*}
and
\begin{equation*}
	u\in C^0([0,T], H^s) \cap C^1([0,T], H^{s-1}).
\end{equation*}
where $T$ is a function of $\|u_0\|_{H^s}$ and $q_2$.
\end{theorem}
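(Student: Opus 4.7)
\textbf{Proof proposal for Theorem \ref{t:locext}.}

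The plan is a standard Picard-type iteration in the Sobolev scale. First I would freeze the nonlinearity to reduce to a linear symmetric hyperbolic problem. Set $u^{(0)}(t,x):=u_0(x)$ and, given $u^{(k)}$, define $u^{(k+1)}$ as the solution of the linear initial value problem
\begin{equation*}
A^0(t,x,u^{(k)})\partial_t u^{(k+1)}+A^j(t,x,u^{(k)})\partial_j u^{(k+1)}+B(t,x,u^{(k)})u^{(k+1)}=0,\qquad u^{(k+1)}|_{t=0}=u_0.
\end{equation*}
Because $A^j$ is Hermitian and $A^0$ is uniformly positive definite on compact $u$-sets, the classical theory for linear symmetric hyperbolic systems with coefficients of class $C^0_t H^s\cap C^1_t H^{s-1}$ gives a unique solution $u^{(k+1)}$ in the same class on any time interval on which $u^{(k)}$ enjoys the needed regularity.

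Next I would derive $H^s$ energy estimates that are uniform in $k$. Applying $D^\alpha$ to the equation for $u^{(k+1)}$ with $|\alpha|\leq s$, pairing with $D^\alpha u^{(k+1)}$ weighted by $A^0(t,x,u^{(k)})$, and using the symmetry of $A^j$ to integrate by parts, one obtains
\begin{equation*}
\tfrac{d}{dt}\,\mathcal{E}_s(u^{(k+1)})\leq C\bigl(\|u^{(k)}\|_{W^{1,\infty}}\bigr)\bigl(1+\|u^{(k)}\|_{H^s}\bigr)\mathcal{E}_s(u^{(k+1)}),
\end{equation*}
where $\mathcal{E}_s\sim\|\cdot\|_{H^s}^2$, and the key auxiliary ingredient is a Moser-type commutator estimate to handle $[D^\alpha,A^j(t,x,u^{(k)})]\partial_j u^{(k+1)}$. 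A bootstrap argument then closes: with $q_1=\kappa_s\|u_0\|_{H^s}$ and any fixed $q_2>q_1$, choose $T=T(\|u_0\|_{H^s},q_2)>0$ so small that the inductive hypothesis $\sup_{[0,T]}\|u^{(k)}\|_{H^s}\leq M$ (for a suitable $M$ with $\kappa_s M\leq q_2$) is propagated to $u^{(k+1)}$; the Sobolev embedding $\|u^{(k)}\|_{L^\infty}\leq \kappa_s\|u^{(k)}\|_{H^s}\leq q_2$ keeps every iterate inside the compact $u$-set on which $A^\mu,B$ are controlled.

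To pass to the limit I would show $\{u^{(k)}\}$ is Cauchy in the weaker space $C^0([0,T],L^2)$. Differencing the equations for $u^{(k+1)}$ and $u^{(k)}$ produces a linear symmetric hyperbolic equation for $v^{(k)}:=u^{(k+1)}-u^{(k)}$ whose source involves $(u^{(k)}-u^{(k-1)})$ through the Lipschitz dependence of $A^\mu,B$ on $u$; the basic $L^2$ energy estimate, combined with the uniform $H^s$ bound, yields $\|v^{(k)}\|_{L^\infty_t L^2}\leq \tfrac12\|v^{(k-1)}\|_{L^\infty_t L^2}$ after possibly shrinking $T$. Hence $u^{(k)}\to u$ in $C^0([0,T],L^2)$; interpolation with the uniform $H^s$-bound upgrades this to $C^0([0,T],H^{s'})$ for every $s'<s$, which is strong enough to pass to the limit in the equation and to recover $u\in L^\infty([0,T],H^s)$, solving \eqref{e:hyp1}--\eqref{e:hyp2}. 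Continuity in the top norm, $u\in C^0([0,T],H^s)$, follows from a Bona--Smith-type mollification argument (approximate by smooth data, pass to the limit with uniform estimates), and then $u_t\in C^0([0,T],H^{s-1})$ is read off from the equation using positive definiteness of $A^0$. Uniqueness is the $L^2$ energy estimate applied to the difference of two solutions sharing the same data.

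\emph{Main obstacle.} The delicate part is not the formal iteration but the Moser/commutator estimates controlling composition $u\mapsto A^\mu(t,x,u)$ in $H^s$ and the commutators $[D^\alpha,A^\mu(\cdot,u^{(k)})]$ at top order $|\alpha|=s$; these require $s>n/2+1$ precisely to keep $\|u^{(k)}\|_{W^{1,\infty}}$ bounded by the Sobolev constant $\kappa_s$. Equally, upgrading the limit from $L^\infty_t H^s$ to $C^0_t H^s$ (so that the estimate on $\sup_{[0,T]\times \mathfrak{M}}|u(t,x)|\leq q_2$ really holds pointwise on $[0,T]$) requires the Bona--Smith approximation rather than just weak-$*$ compactness, and is the technical heart of the argument.
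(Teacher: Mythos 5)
The paper does not prove Theorem~\ref{t:locext}: it explicitly states the result ``without proofs'' and cites \cite[\S 5]{Racke2015} (and \cite{Majda2012,Taylor2010}). Your Picard-iteration sketch --- linearize by freezing coefficients, uniform $H^s$ energy estimates via Moser/commutator bounds with $s>n/2+1$, $L^2$ contraction for the iterates, interpolation to $C^0_tH^{s'}$ ($s'<s$), Bona--Smith for top-norm continuity, and reading $\partial_tu\in C^0_tH^{s-1}$ off the equation using positive definiteness of $A^0$ --- is precisely the argument those references give, so it is correct and consistent with the paper's intended source.

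One small point worth making explicit if you were to write this up in full: the theorem asserts a \emph{classical} solution $u\in C^1_b([0,T]\times\mathfrak{M})$ with the pointwise bound $\sup|u|\leq q_2$, not just $u\in C^0_tH^s\cap C^1_tH^{s-1}$. You should close the loop by noting that $s>n/2+1$ gives $H^s\hookrightarrow C^1_b$ and $H^{s-1}\hookrightarrow C^0_b$, so the regularity you obtain implies $u\in C^1_b$ jointly in $(t,x)$, and that the Sobolev constant $\kappa_s$ together with your bootstrap bound $\sup_{[0,T]}\|u^{(k)}\|_{H^s}\leq M$ with $\kappa_s M\leq q_2$ passes to the limit $u$, giving the stated $L^\infty$ bound with the constant $q_2>q_1=\kappa_s\|u_0\|_{H^s}$.
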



\section{Cauchy problems for Fuchsian systems}\label{s:fuc}
In this Appendix, we introduce the main tool for the analysis in this article, which is a variation of the theorem originally established in \cite[Appendix B]{Oliynyk2016a} and significantly developed in \cite{Beyer2020}. The proof is omitted, but readers can find the detailed proofs in \cite{Beyer2020} for a much more general case beyond the original theorem. Other generalizations and applications can be found in, for example,   \cite{Ames2022,Ames2022a,Beyer2021,Beyer2025,Fajman2021,Fajman2023,Fajman2025,Beyer2024,Beyer2024b,Beyer2020a,Oliynyk2021,Oliynyk2024,Liu2018b,Liu2018,Liu2022a,Liu2022,Liu2022b,Liu2018a,Liu2024a,Fournodavlos2024,Marshall2023}.

Consider the following symmetric hyperbolic system 
\begin{align}
	B^{\mu}(t,x,u)\partial_{\mu}u =&\frac{1}{t}\textbf{B}(t,x,u)\textbf{P}u+H(t,x,u)\quad&&\text{in}\;[T_{0},T_{1})\times\mathbb{T}^{n},  \label{e:model1}\\
	u =&u_{0} &&\text{in}\;\{T_{0}\}\times\mathbb{T}^{n},\label{e:model2}
\end{align}
where $T_{0}<T_{1}\leq0$, and we require the following \textbf{Conditions}\footnote{The notations in this appendix, such as  $\mathrm{O}(\cdot)$, $\mathcal{O}(\cdot)$ and $B_R(\Rbb^N)$, are defined in \cite[\S$2.4$]{Beyer2020}. }: 
\begin{enumerate}[(I)]
	\item \label{c:2} $\textbf{P}$ is a constant, symmetric projection operator, i.e., $\textbf{P}^{2}=\textbf{P}$, $\textbf{P}^{T}=\textbf{P}$ and $\partial_\mu \textbf{P}=0$.
	
	\item \label{c:3} $u=u(t,x)$ and $H(t,x,u)$ are $\mathbb R^{N}$-valued maps, $H\in C^{0}([T_{0},0],C^{\infty}(\Tbb^n\times B_R(\mathbb R^{N}),\Rbb^N))$ and it satisfies $H(t,x,0)=0$.
	
	\item \label{c:4} $B^{\mu}=B^{\mu}(t,x,u)$ and $\textbf{B}=\textbf{B}(t,x,u)$ are $\mathbb M_{N\times N}$-valued maps, and there is a constant $R>0$, such that $B^{i}\in   C^{0}([T_{0},0),C^{\infty}(\Tbb^n\times  B_R(\mathbb R^{N}), \mathbb M_{N\times N})$,  $\textbf{B}\in   C^{0}([T_{0},0],C^{\infty}(\Tbb^n\times  B_R(\mathbb R^{N}), \mathbb M_{N\times N})$, $B^{0}\in C^{1}([T_{0},0],C^{\infty}(\Tbb^n\times  B_R(\mathbb R^{N}), \mathbb M_{N\times N})$, and they satisfy
	\begin{equation*}\label{e:comBP}
		(B^{\mu})^{T}=B^{\mu},\quad [\textbf{P}, \textbf{B}]=\textbf{PB}-\textbf{BP}=0.
	\end{equation*}
	and $	B^i$ can be expanded as
	\begin{equation*}
		B^i(t,x,u)=B_0^i(t,x,u)+\frac{1}{t} B_2^i(t,x,u)
	\end{equation*}
	where $B_0^i, B_2^i\in C^{0}([T_{0},0],C^{\infty}(\Tbb^n\times  B_R(\mathbb R^{N}), \mathbb M_{N\times N}))$.
	
	Suppose there exist $\tilde{B}^0, \tilde{\mathbf{B}} \in C^0([T_0,0], C^\infty(\Tbb^n, \mathbb M_{N\times N}))$, such that
	\begin{equation*}
		[\mathbf{P}, \tilde{\mathbf{B}}] =0, \quad
		B^0(t,x,u)-\tilde{B}^0(t,x)= \mathrm{O}(u) , \quad
		\mathbf{B}(t,x,u)-\tilde{\mathbf{B}} (t,x)= \mathrm{O}(u)
	\end{equation*}
	for all $ (t,x,u) \in[T_{0},0]\times \Tbb^n\times B_R(\Rbb^N) $.
		
	Moreover, there is $\tilde{B}_2^i \in C^0([T_0,0], C^\infty(\Tbb^n, \mathbb M_{N\times N}))$, such that
	\begin{gather*}
		\mathbf{P} B_2^i(t,x,u) \mathbf{P}^\perp =   \mathrm{O}(\mathbf{P} u), \quad
		\mathbf{P}^\perp B_2^i(t,x,u) \mathbf{P} =   \mathrm{O}(\mathbf{P} u),  \\
		\mathbf{P}^\perp B_2^i(t,x,u) \mathbf{P}^\perp =   \mathrm{O}(\mathbf{P} u\otimes \mathbf{P} u),\quad
		\mathbf{P}  (B_2^i(t,x,u)-\tilde{B}_2^i(t,x)) \mathbf{P}  =   \mathrm{O}( u),
	\end{gather*}
	for all $ (t,x,u) \in[T_{0},0]\times \Tbb^n\times B_R(\Rbb^N) $, 	where
	$
	\textbf{P}^{\bot}=\mathds{1} - \textbf{P}
	$
	is the complementary projection operator.
	
	\item \label{c:5}
	There exist constants $\kappa,\,\gamma_{1},\,\gamma_{2}$ such that
	\begin{equation*}
		\frac{1}{\gamma_{1}}\mathds{1}\leq B^{0}\leq \frac{1}{\kappa} \textbf{B} \leq\gamma_{2}\mathds{1} \label{e:Bineq}
	\end{equation*}
	for all $ (t,x,u) \in[T_{0},0]\times \Tbb^n\times B_R(\Rbb^N) $.
	
	\item \label{c:6} For all $(t,x,u)\in[T_{0},0]\times\Tbb^n \times B_R(\mathbb R^{N})$, assume
	\begin{equation*}
		\textbf{P}^{\bot}B^{0}(t,\textbf{P}^\perp u)\textbf{P}=\textbf{P}B^{0}(t,\textbf{P}^\perp u)\textbf{P}^{\bot}=0.
	\end{equation*}
	\item \label{c:7}
	For each $ (t,x,u) \in[T_{0},0)\times \Tbb^n\times B_R(\Rbb^N) $, there exists an $s_0>0$ such that
	\begin{equation*}
		\Theta(s)=B^0\Bigl(t,x, u + s[B^0(t,x,u)]^{-1}\Bigl(\frac{1}{t}\mathbf{B}(t,x,u)\mathbf{P} v+H(t,x,u)\Bigr)\Bigr), \quad |s|<s_0,
	\end{equation*}
	defines a smooth curve in\footnote{$L(\Rbb^N)$  denotes the set of all linear maps from $\Rbb^N$ to itself. } $L(\Rbb^N)$. There exist
	constants $\theta$ and $\beta_{2\ell+1}\geq 0$, $\ell=0,\cdots,3$, such that the derivative $\Theta^\prime(0)$ satisfies\footnote{This condition is a reformulation of \cite[\S 3.1.\textrm{v}]{Beyer2020}. It is straightforward to check that it implies the condition \cite[\S 3.1.\textrm{v}]{Beyer2020} for the Fuchsian equation \eqref{e:model1}--\eqref{e:model2} that we are considering here.  }
	\begin{gather*}
		( u , \mathbf{P}\Theta^\prime(0)\mathbf{P} u )=\mathcal{O}\bigl(\theta u \otimes u   +|t|^{-1}\beta_1\mathbf{P}u \otimes \mathbf{P}u \bigr),  \\
		( u , \mathbf{P}\Theta^\prime(0)\mathbf{P}^\perp  u )=\mathcal{O}\biggl(\theta u \otimes u  +\frac{|t|^{-1}\beta_3}{R}\mathbf{P}u \otimes \mathbf{P}u \biggr),  \\
		( u , \mathbf{P}^\perp \Theta^\prime(0)\mathbf{P}u ) =\mathcal{O}\biggl(\theta u \otimes u   +\frac{|t|^{-1}\beta_5}{R}\mathbf{P}u \otimes \mathbf{P}u \biggr)
		\intertext{and}
	( u , \mathbf{P}^\perp  \Theta^\prime(0)\mathbf{P}^\perp  u ) =\mathcal{O}\biggl(\theta u \otimes u   +\frac{|t|^{-1}\beta_7}{R^2}\mathbf{P}u \otimes \mathbf{P}u \biggr)
	\end{gather*}	
	for all $(t,x,u )\in [T_0,0)\times \Tbb^n \times  B_R(\Rbb^N)$. 	
\end{enumerate}

Let us first give estimates on the solutions to the equations of the form \eqref{e:model1}, which, however, satisfy only some of the above conditions \eqref{c:3}--\eqref{c:4}. 
\begin{theorem}\label{t:locest}
	Suppose that $k\in \Zbb_{> \frac{n}{2}+1}$, $u_{0}\in H^{k}(\mathbb T^{n})$, and that conditions \eqref{c:3}--\eqref{c:4} are satisfied. 
	In addition, $B^{0} =\mathds{1}$, $B^i_0=0$, and $\mathbf{P}=\mathds{1}$, while $\tilde{\mathbf{B}}(t)$ and $B_2^i(t)$ depend only on $t$. 
    If there exists
	a classical solution $u\in C_b^{1}([T_{0},T_{\ast}]\times\mathbb T^{n})$ ($T_{\ast}<0$) to \eqref{e:model1}--\eqref{e:model2},
	then it satisfies the energy estimate
	\begin{equation*}
	\|u\|_{H^s} \leq 	e^{-c_2 T_0} (-T_0)^{c_1}\|u_0\|_{H^s} (-t)^{-c_1} e^{c_2 t}
	\end{equation*}
	for all $t\in[T_0,T_\ast]$, where
	$c_1:=c_1(\|u \|_{\Li([T_0,T_\ast],W^{1,\infty})})$ and  $c_2:=c_2(\|u(t)\|_{\Li([T_0,T_\ast],L^\infty)})$ are constants\footnote{This type of constants are non-negative, non-decreasing, continuous functions in all their arguments. }.
\end{theorem}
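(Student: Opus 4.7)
The plan is to carry out a standard higher-order $L^2$ energy estimate for the symmetric hyperbolic system, carefully tracking the singular factor $1/t$ so that the resulting differential inequality integrates to the stated bound. Under the simplifying assumptions ($B^0=\mathds{1}$, $B_0^i=0$, $\mathbf{P}=\mathds{1}$, and $B_2^i(t)$, $\tilde{\mathbf{B}}(t)$ depending only on $t$), equation \eqref{e:model1} reduces to
\[
\del{t} u + \frac{1}{t} B_2^i(t)\,\del{i} u \;=\; \frac{1}{t}\mathbf{B}(t,x,u)\, u + H(t,x,u),
\]
a transport-type equation with a singular zeroth-order potential. My goal is to obtain a differential inequality of the form $\frac{d}{dt}\|u(t)\|_{H^s}^2 \leq \frac{2c_1}{-t}\|u(t)\|_{H^s}^2 + 2 c_2 \|u(t)\|_{H^s}^2$ on $[T_0,T_\ast]\subset[T_0,0)$ and then apply a Gronwall-type integration.

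First I would define $E_s(t)^2 := \sum_{0\le|\alpha|\le s}\la D^\alpha u, D^\alpha u\ra$, apply $D^\alpha$ to the equation, and pair with $D^\alpha u$. The transport term contributes nothing after integration by parts: $B_2^i(t)$ is symmetric (from \eqref{c:4}), spatially constant, and $\Tbb^n$ has no boundary, so $\frac{1}{t}\la D^\alpha u, B_2^i(t)\del{i} D^\alpha u\ra$ vanishes identically; moreover $B_2^i$ commutes with $D^\alpha$, so no commutator appears from this term. The inhomogeneity $H$ satisfies $H(t,x,0)=0$ and is smooth in $u$, so Moser composition estimates give $\|H(\cdot,u)\|_{H^s}\leq c_2(\|u\|_{L^\infty})\,\|u\|_{H^s}$, contributing $c_2$ to the final constant.

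The main obstacle is the singular term $\frac{1}{t}\mathbf{B}(t,x,u)\, u$. Writing $D^\alpha(\mathbf{B}u)=\mathbf{B}D^\alpha u+[D^\alpha,\mathbf{B}]u$, the first piece yields $\frac{1}{t}\la D^\alpha u,\mathbf{B}D^\alpha u\ra$, controlled by $\frac{\|\mathbf{B}\|_{L^\infty}}{-t}\|u\|_{H^s}^2 \leq \frac{\tilde{c}_1(\|u\|_{L^\infty})}{-t}\|u\|_{H^s}^2$. The commutator piece is handled by the standard Moser commutator inequality
\[
\|[D^\alpha,\mathbf{B}(t,x,u)]u\|_{L^2}\;\lesssim\; \|\nabla_{x,u}\mathbf{B}\|_{L^\infty}\,\|u\|_{W^{1,\infty}}\,\|u\|_{H^{s-1}}+\|\mathbf{B}\|_{H^s}\,\|\nabla u\|_{L^\infty},
\]
where the right-hand side is bounded by $c_1(\|u\|_{W^{1,\infty}})\,\|u\|_{H^s}$; this is precisely where $\|u\|_{W^{1,\infty}}$, rather than a higher Sobolev norm, enters the constant $c_1$. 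Observing that $s>n/2+1$ guarantees the Moser estimates apply, one divides by $-t>0$ and obtains the desired contribution $\frac{c_1}{-t}\|u\|_{H^s}^2$.

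Combining these estimates yields
\[
\frac{d}{dt}\|u(t)\|_{H^s}^2 \;\leq\; \frac{2 c_1}{-t}\|u(t)\|_{H^s}^2 + 2 c_2\,\|u(t)\|_{H^s}^2,\qquad t\in[T_0,T_\ast].
\]
Since $T_\ast<0$, the coefficient $1/(-t)$ is uniformly bounded on $[T_0,T_\ast]$, so there is no integrability issue. Dividing by $\|u\|_{H^s}^2$ (or using Gronwall directly) and integrating from $T_0$ to $t$, the $1/(-t)$ term produces the logarithmic factor that exponentiates to $(-T_0/-t)^{c_1}$, and the $c_2$ term yields $e^{c_2(t-T_0)}$, giving
\[
\|u(t)\|_{H^s}\leq \|u_0\|_{H^s}\Bigl(\frac{-T_0}{-t}\Bigr)^{c_1} e^{c_2 (t-T_0)} = e^{-c_2 T_0}(-T_0)^{c_1}\|u_0\|_{H^s}(-t)^{-c_1} e^{c_2 t},
\]
which is the asserted bound. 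The single delicate point is confirming that the constant $c_1$ in the $1/|t|$-coefficient can be chosen to depend only on $\|u\|_{L^\infty W^{1,\infty}}$ (not on higher norms); this reduces entirely to the Moser commutator estimate above and the hypothesis $u\in C_b^1([T_0,T_\ast]\times \Tbb^n)$, which makes $\|u\|_{L^\infty W^{1,\infty}}$ finite a priori.
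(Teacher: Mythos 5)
Your proposal is correct and follows essentially the same route as the paper's proof: a higher-order $L^2$ energy estimate, vanishing of the advection term by skew-symmetry of $B_2^i(t)\partial_i$, Moser/commutator estimates for the $\frac{1}{t}\mathbf{B}u$ and $H$ terms producing constants depending only on $\|u\|_{W^{1,\infty}}$ and $\|u\|_{L^\infty}$ respectively, and then integration of the resulting differential inequality $\partial_t\|u\|^2_{H^s}\leq -\frac{2c_1}{t}\|u\|_{H^s}^2 + 2c_2\|u\|_{H^s}^2$ (the paper writes the integrating factor $(-t)^{2c_1}e^{-2c_2 t}$ explicitly, whereas you invoke Gronwall, but this is the same calculation).
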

\begin{proof}
	According to the assumptions of this theorem, since $B^{0} =\mathds{1}$, $\mathbf{P}=\mathds{1}$, and $B^i_0=0$, equation \eqref{e:model1} reduces to 
\begin{equation}\label{e:model3}
	\partial_{0}u+\frac{1}{t} B_2^i(t)\partial_{i}u = \frac{1}{t}\textbf{B}(t,u) u+H(t,u) .
\end{equation}	
Applying $D^\alpha$ to \eqref{e:model3} yields 
\begin{equation}\label{e:model5}
	\partial_{0}D^\alpha u+\frac{1}{t} B_2^i(t)\partial_{i}D^\alpha u = \frac{1}{t}  \textbf{B}(t,u) D^\alpha u+ \frac{1}{t} [D^\alpha, \textbf{B}(t,u)] u+D^\alpha H(t,u) .
\end{equation}
Then applying $\sum_{0\leq |\alpha|\leq s}\la D^\alpha u,\cdot\ra = \sum_{0\leq |\alpha|\leq s} \int_{\Tbb^n} (D^\alpha u,\cdot )d^nx$ to both sides of \eqref{e:model5}, 
\begin{align*}
	&\sum_{0\leq |\alpha|\leq s}\la D^\alpha u, \partial_{0}D^\alpha u \ra +\frac{1}{t}  \sum_{0\leq |\alpha|\leq s}\la D^\alpha u,B_2^i(t)\partial_{i}D^\alpha u \ra = \frac{1}{t} \sum_{0\leq |\alpha|\leq s}\la D^\alpha u, \textbf{B}(t,u) D^\alpha u \ra \notag  \\
	&\hspace{1cm} + \frac{1}{t} \sum_{0\leq |\alpha|\leq s}\la D^\alpha u, [D^\alpha, \textbf{B}(t,u)] u \ra +\sum_{0\leq |\alpha|\leq s}\la D^\alpha u, D^\alpha H(t,u) \ra .
\end{align*}
Recalling the notations in \S\ref{s:funsp}, 
\begin{gather*}
	\sum_{0\leq |\alpha|\leq s}\la D^\alpha u, \partial_{0}D^\alpha u \ra =   \frac{1}{2} \partial_{0} \sum_{0\leq |\alpha|\leq s}\la D^\alpha u, D^\alpha u \ra= \frac{1}{2} \partial_{0} \|u\|_{H^s}^2
	\intertext{and}
	\sum_{0\leq |\alpha|\leq s}\la D^\alpha u, B^i_2(t) \partial_{i}D^\alpha u \ra = 0 .
\end{gather*}
Using H\"older's inequality and the standard Moser-type estimates for compositions (see, for example, \cite[Lemma $A.3$]{Liu2018} and \cite{Taylor2010}), we arrive at 
\begin{gather*}
	\sum_{0\leq |\alpha|\leq s}\la D^\alpha u, D^\alpha H(t,u) \ra
	\leq   \| u\|_{H^s} \|  H(t,u)\|_{H^s}
	\leq   	C( \|  H\|_{C^{s }(\overline{B_R(\Rbb^N)})}, \|u\|_{L^\infty}) \| u\|_{H^s}^2, \\
	\sum_{0\leq |\alpha|\leq s}\la D^\alpha u, \textbf{B}(t,u) D^\alpha u \ra \leq C\|\textbf{B}(t,u)\|_{L^\infty}\| u\|_{H^s}^2 ,
\end{gather*}
and from the inequality $\|[D^\alpha, f]g\|_{L^2} \leq C(\|Df\|_{L^\infty}\| g\|_{H^{s-1}}+\| D f\|_{H^{s-1}} \|g\|_{L^\infty})$,
\begin{align*}
&	\sum_{0\leq |\alpha|\leq s}\la D^\alpha u, [D^\alpha, \textbf{B}(t,u)] u \ra \leq 	\sum_{0\leq |\alpha|\leq s}\| D^\alpha u\|_{L^2} \|[D^\alpha, \textbf{B}(t,u)] u \|_{L^2}  \notag  \\
	&\hspace{1cm}	\leq C \| u\|_{H^s} (\|D\textbf{B}(t,u)\|_{L^\infty}\| u\|_{H^{s-1}}+\| D \textbf{B}(t,u)\|_{H^{s-1}} \|u\|_{L^\infty}) \notag  \\
	&\hspace{1cm}	\leq C \| u\|_{H^s} (\|D\textbf{B}(t,u)\|_{L^\infty}\| u\|_{H^{s-1}}+	C( \|D_u  \textbf{B}\|_{C^{s-1}(\overline{B_R(\Rbb^N)})}, \|u\|_{L^\infty})\| u\|_{H^{s }} \|u\|_{L^\infty})\notag  \\
	&\hspace{1cm}	\leq   C( \|D_u  \textbf{B}\|_{C^{s-1}(\overline{B_R(\Rbb^N)})}, \|u\|_{W^{1,\infty}}) \| u\|_{H^s}^2 .
\end{align*}
Then, combining the above estimates and noting that $t<T_\ast<0$, we obtain, for any $t\in[T_0,T_\ast]$, 
\begin{equation*}
	\partial_{0} \|u\|_{H^s}^2 \leq - \frac{2c_1 }{t}\|u\|_{H^s}^2 +2c_2 \|u\|_{H^s}^2.
\end{equation*}
where $c_1:=c_1(\|u \|_{\Li([T_0,T_\ast],W^{1,\infty})})$ and  $c_2:=c_2(\|u\|_{\Li([T_0,T_\ast],L^\infty)})$   are constants depending on $\|u\|_{\Li([T_0,T_\ast),W^{1,\infty})}$.
This inequality is equivalent to
\begin{equation*}
	\del{t}\bigl((-t)^{2c_1} e^{-2 c_2 t} \|u\|_{H^s}^2\bigr) \leq 0.
\end{equation*}
Integrating this inequality, we conclude that for any $t\in[T_0,T_\ast]$,
\begin{equation*}
	\|u\|_{H^s} \leq 	e^{-c_2 T_0} (-T_0)^{c_1}\|u_0\|_{H^s} (-t)^{-c_1} e^{c_2 t}  .
\end{equation*}
Thus, the proof is complete. 
\end{proof}

Now, let us present the global existence theorem for the Fuchsian system (see \cite{Beyer2020} for a detailed proof).
\begin{theorem}\label{t:fuc}
	Suppose that $k\in\Zbb_{> \frac{n}{2}+3}$, $u_{0}\in H^{k}(\mathbb T^{n})$, and that conditions \eqref{c:2}--\eqref{c:7} are satisfied. Additionally, assume that the constants $\kappa, \gamma_1,  \beta_1,\beta_3,\beta_5,\beta_7$ from  conditions \eqref{c:2}--\eqref{c:7} satisfy
	\begin{equation*}
		\kappa>\frac{1}{2} \gamma_1 \max\Bigl\{\sum^3_{\ell=0} \beta_{2\ell+1}, \beta_1+2k(k+1)  \mathtt{b} \Bigr\}
	\end{equation*}
where
\begin{equation*}
	\mathtt{b}:= \sup_{T_0\leq t<0} \bigl(\||\mathbf{P} \tilde{\mathbf{B}} D (\tilde{\mathbf{B}}^{-1} \tilde{B}^0)(\tilde{B}^0)^{-1} \mathbf{P} \tilde{B}_2^i \mathbf{P}|_{\mathrm{op}}\|_{\Li}+\||\mathbf{P}\tilde{\mathbf{B}} D (\tilde{\mathbf{B}}^{-1} \tilde{B}_2^i)\mathbf{P}|_{\mathrm{op}}\|_{\Li}\bigr).
\end{equation*}
	Then, there exist constants $\delta_0, \delta>0$ satisfying $\delta<\delta_0$, such that if
	\begin{equation*}
		\|u_0\|_{H^k} \leq \delta,
	\end{equation*}
    there exists a unique solution
	\begin{equation*}
		u\in C^0([T_0,0),H^k(\Tbb^n) ) \cap C^1([T_0,0),H^{k-1}(\Tbb^n) ) \cap \Li ([T_0,0),H^k(\Tbb^n))
	\end{equation*}
to the initial value problem \eqref{e:model1}--\eqref{e:model2} such that $\Pbp u(0):=\lim_{t\nearrow 0}\Pbp u(t)$ exists in $H^{s-1}(\Tbb^n)$.

\noindent Moreover, for $T_0\leq t<0$, the solution $u$ satisfies the energy estimate
\begin{equation*}\label{e:ineq1}
	\|u(t)\|_{H^k}^2  - \int^t_{T_0}\frac{1}{\tau}\|\mathbf{P} u(\tau)\|^2_{H^k}d\tau \leq C(\delta_0,\delta_0^{-1}) \|u_0\|^2_{H^k} .
\end{equation*}
\end{theorem}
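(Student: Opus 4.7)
The plan is to prove Theorem~\ref{t:fuc} by combining standard symmetric hyperbolic local existence with an a priori energy estimate tailored to the Fuchsian singular term $t^{-1}\mathbf{B}\mathbf{P}u$, and then using a continuation argument to extend the solution all the way to $t=0^-$. First, for any $T_\ast\in(T_0,0)$ the system \eqref{e:model1} is a quasilinear symmetric hyperbolic system with smooth coefficients on $[T_0,T_\ast]\times\Tbb^n$ (the factor $1/t$ is bounded there), so Theorem~\ref{t:locext} gives a classical solution $u\in C^0([T_0,T_\ast^\prime),H^k)\cap C^1([T_0,T_\ast^\prime),H^{k-1})$ on a maximal interval. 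The whole game is then to show that if $\|u_0\|_{H^k}\leq\delta$ with $\delta$ small enough, the solution cannot leave a small ball $B_R(\Rbb^N)$ and its $H^k$ norm cannot blow up before $t=0$.

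The heart of the proof is the energy estimate. Define, for each multi-index $|\alpha|\leq k$, the weighted energy
\begin{equation*}
E_\alpha(t)=\langle D^\alpha u,B^0(t,x,u)D^\alpha u\rangle,\qquad E(t)=\sum_{|\alpha|\leq k}E_\alpha(t),
\end{equation*}
and differentiate along the flow. Applying $D^\alpha$ to \eqref{e:model1}, pairing with $D^\alpha u$, and integrating by parts in $x^i$ produces the crucial identity
\begin{equation*}
\tfrac{1}{2}\partial_t E_\alpha=\tfrac{1}{t}\langle D^\alpha u,\mathbf{B}\mathbf{P}D^\alpha u\rangle+\tfrac{1}{2}\langle D^\alpha u,(\partial_t B^0+\partial_i B^i)D^\alpha u\rangle+\langle D^\alpha u,D^\alpha H\rangle+\text{commutators}.
\end{equation*}
Because $-1/t>0$ on $[T_0,0)$ and condition~\eqref{c:5} gives $\mathbf{B}\geq\kappa B^0$, the Fuchsian term contributes $-\tfrac{\kappa}{|t|}\|\mathbf{P}D^\alpha u\|_{L^2}^2$, i.e.\ a \emph{good} sign. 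I would then expand $\partial_t B^0$ via condition~\eqref{c:7} (this is the purpose of the curve $\Theta(s)$): writing $\partial_t B^0=\Theta^\prime(0)$ minus regular pieces, the four bounds on $\mathbf{P}\Theta^\prime(0)\mathbf{P}$, $\mathbf{P}\Theta^\prime(0)\mathbf{P}^\perp$, etc., convert $\langle D^\alpha u,\partial_t B^0 D^\alpha u\rangle$ into a sum of a regular piece $\mathcal{O}(\theta\|u\|_{H^k}^2)$ and a singular piece controlled by $\tfrac{1}{|t|}(\beta_1+\beta_3+\beta_5+\beta_7)\|\mathbf{P}D^\alpha u\|_{L^2}^2$, plus the $\tilde B_2^i$ derivative contributions that give the factor $2k(k+1)\mathtt{b}$ after summing over $|\alpha|\leq k$. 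Condition~\eqref{c:6} kills the cross term $\mathbf{P}^\perp B^0\mathbf{P}$ at $u=\mathbf{P}^\perp u$, which is what allows the bad coefficient to be only the $\beta$'s rather than the full $\partial_t B^0$.

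Combining everything, and handling commutators $[D^\alpha,\mathbf{B}\mathbf{P}]u$, $[D^\alpha, B^i]\partial_i u$ and $D^\alpha H$ by Moser--Sobolev estimates (which costs a factor $C(\|u\|_{W^{1,\infty}})$, hence the need $k>n/2+3$ to close the estimate via the Sobolev embedding $H^{k-1}\hookrightarrow W^{1,\infty}$), the differential inequality takes the schematic form
\begin{equation*}
\partial_t E(t)\leq \tfrac{1}{t}\Big(2\kappa-\gamma_1(\beta_1+\beta_3+\beta_5+\beta_7)\Big)\|\mathbf{P}u\|_{H^k}^2-\tfrac{1}{t}\Big(2\kappa-\gamma_1\beta_1-2\gamma_1 k(k+1)\mathtt{b}\Big)\|\mathbf{P}u\|_{H^k}^2+C(\|u\|_{H^k})E(t).
\end{equation*}
The smallness hypothesis on $\kappa$ versus $\max\{\sum\beta_{2\ell+1},\beta_1+2k(k+1)\mathtt{b}\}$ makes both bracketed coefficients strictly positive, so the singular term is an absorbable negative contribution and we get
\begin{equation*}
E(t)-\int_{T_0}^t\tfrac{1}{\tau}\|\mathbf{P}u(\tau)\|_{H^k}^2\,d\tau\leq C(\delta_0,\delta_0^{-1})E(T_0),
\end{equation*}
which is exactly the stated energy estimate.

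With this a priori bound in hand, the rest is routine: choose $\delta_0$ so that $\kappa_s\delta_0<R$ (Sobolev), then $\delta<\delta_0$ so that $C(\delta_0,\delta_0^{-1})\delta^2<\delta_0^2$; a standard bootstrap plus the continuation principle for symmetric hyperbolic systems (Theorem~\ref{t:locext}) then forbids blow-up before $t=0$, giving $u\in C^0([T_0,0),H^k)\cap C^1([T_0,0),H^{k-1})\cap L^\infty([T_0,0),H^k)$. To produce the limit $\mathbf{P}^\perp u(0)$ in $H^{k-1}$, I would apply $\mathbf{P}^\perp$ to the equation; since $\mathbf{P}^\perp\mathbf{B}\mathbf{P}=0$ by condition~\eqref{c:2} and $[\mathbf{P},\mathbf{B}]=0$ from \eqref{c:4}, the singular term drops out of the evolution equation for $\mathbf{P}^\perp u$, and $\partial_t(\mathbf{P}^\perp u)$ is uniformly bounded in $H^{k-1}$ thanks to the energy estimate, so $\mathbf{P}^\perp u(t)$ is Cauchy in $H^{k-1}$ as $t\nearrow 0$. \textbf{The main obstacle} I anticipate is bookkeeping the four pieces $\mathbf{P}\partial_t B^0\mathbf{P}$, $\mathbf{P}\partial_t B^0\mathbf{P}^\perp$, $\mathbf{P}^\perp\partial_t B^0\mathbf{P}$, $\mathbf{P}^\perp\partial_t B^0\mathbf{P}^\perp$ carefully enough, since only the $\mathbf{P}$-projected norm is controlled by the good $-\tfrac{\kappa}{|t|}\|\mathbf{P}u\|^2$ term, so every $\mathbf{P}^\perp u$ appearing in an estimate must be absorbed either by $\theta\|u\|^2$ (regular) or by a factor of $\|\mathbf{P}u\|$ coming from the $\mathrm{O}(\mathbf{P}u)$ smallness in condition~\eqref{c:4} and the structural identities in condition~\eqref{c:7}; this is exactly the role of the seemingly ad hoc bounds in \eqref{c:4} and \eqref{c:7}.
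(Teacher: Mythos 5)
The paper does not actually prove Theorem \ref{t:fuc}: the appendix states explicitly that the proof is omitted and refers the reader to \cite[Appendix B]{Oliynyk2016a} and \cite{Beyer2020}. So there is no in-paper argument to compare against; what I can say is that your outline is the strategy of those references — a weighted $H^k$ energy with $B^0$ as the inner-product weight, the observation that condition \eqref{c:5} makes the singular term $\tfrac{1}{t}\langle D^\alpha u,\mathbf{B}\mathbf{P}D^\alpha u\rangle$ coercive with the good sign, the use of $\Theta'(0)$ to capture the singular part of $\partial_t(B^0(t,x,u))$ after substituting the equation for $\partial_t u$, absorption of the singular error terms (including the commutators with $t^{-1}\tilde B_2^i\partial_i$, which produce the $2k(k+1)\mathtt{b}$ contribution) into the coercive term under the stated lower bound on $\kappa$, and then Moser/Sobolev estimates plus continuation. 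As a sketch this is faithful to the known proof.

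Two places where the sketch is too loose to stand as a proof. First, the schematic differential inequality you write, with two separate bracketed coefficients both multiplying $\tfrac{1}{t}\|\mathbf{P}u\|_{H^k}^2$, is not what the computation yields; the two quantities inside the $\max$ in the hypothesis come from two different groupings of the singular error terms, and one needs a single net coercivity condition, not the sum of two — this is cosmetic but should be fixed. Second, and more substantively, your argument for the existence of $\Pbp u(0)$ in $H^{k-1}$ does not work as stated: $\partial_t(\Pbp u)$ is \emph{not} uniformly bounded in $H^{k-1}$, because the singular spatial term $\tfrac{1}{t}\Pbp B_2^i\partial_i u$ does not drop out of the $\Pbp$-projected equation — conditions \eqref{c:4} only make $\Pbp B_2^i\mathbf{P}=\mathrm{O}(\mathbf{P}u)$ and $\Pbp B_2^i\Pbp=\mathrm{O}(\mathbf{P}u\otimes\mathbf{P}u)$, and likewise the $\Pbp\mathbf{B}\mathbf{P}u$ piece of the source is $\mathrm{O}(t^{-1}\mathbf{P}u)$. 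What one actually needs is that the integrated coercive term $-\int_{T_0}^t\tau^{-1}\|\mathbf{P}u(\tau)\|_{H^k}^2\,d\tau$ in the energy estimate bounds $|\tau|^{-1/2}\mathbf{P}u$ in $L^2((T_0,0),H^k)$, whence by Cauchy--Schwarz $\partial_t(\Pbp u)\in L^1((T_0,0),H^{k-1})$; this integrability, not uniform boundedness, is what makes $\Pbp u(t)$ Cauchy as $t\nearrow 0$. With those two repairs your outline matches the cited proof.
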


\section*{Acknowledgement}
		C.L. is supported by the Fundamental Research Funds for the Central Universities, HUST: 2020kfyXJJS037 ($5003011036$ and $5003011047$).
		We also thank the referees for their comments and criticisms, which have served to improve the content and exposition of this article.

\bigskip

\textbf{Data Availability} Data sharing is not applicable to this article as no datasets were
generated or analysed during the current study.

\bigskip

\textbf{Declarations}

\bigskip

\textbf{Conflict of interest} The authors declare that they have no conflict of interest.
		
\bibliographystyle{amsplain}
\bibliography{Reference_Chao}

\end{document}